\newtheorem{thm}{Theorem}[section]
\newtheorem{lem}[thm]{Lemma}
\newtheorem{cor}[thm]{Corollary}
\newtheorem{prop}[thm]{Proposition}
\newtheorem{quest}[thm]{Question}
\theoremstyle{remark}
\newtheorem{exm}{Example}
\newtheorem{rem}[thm]{Remark}
\theoremstyle{definition}
\newtheorem{defi}[thm]{Definition}
\newcommand {\Zz} {\mathbb{Z}}
\newcommand {\Nz} {\mathbb{N}}
\newcommand {\Cz} {\mathbb{C}}
\newcommand {\Qz} {\mathbb{Q}}
\newcommand {\Rz} {\mathbb{R}}
\newcommand {\Fz} {\mathbb{F}}
\newcommand {\Oc} {{\mathcal O}}
\newcommand{\BAR}{\overline}
\DeclareMathOperator{\modu}{mod \:}
\DeclareMathOperator{\Rep}{{\mathcal R}\it ep}
\newcommand{\defst}[1]{{\it #1}}
\begin{document}

\title{Fusion algebras with negative structure constants}
\author{Michael Cuntz}
\thanks{}
\address{Michael Cuntz, Universit\"at Kaiserslautern,
Postfach 3049, 67653 Kaiserslautern}
\email{cuntz@mathematik.uni-kl.de}
\keywords{fusion algebra, table algebra, Hadamard matrix}

\begin{abstract}
We introduce fusion algebras with not necessarily positive
structure constants and without identity element. We prove that they
are semisimple when tensored with $\mathbb{C}$ and that their characters
satisfy orthogonality relations. Then we define
the proper notion of subrings and factor rings for such algebras.
For certain algebras $R$ we prove the existence of a ring $R'$
with nonnegative structure constants such that $R$ is
a factor ring of $R'$.
We give some examples of interesting factor rings of the
representation ring of the quantum double of a finite group.
Then, we investigate the algebras associated to Hadamard
matrices. For an $n\times n$-matrix the corresponding algebra
is a factor ring of a subalgebra of $\Zz[{(\Zz/2\Zz)}^{n-2}]$.
\end{abstract}

\maketitle

\section{Introduction}

A $\Zz$-based ring is a finite dimensional $\Zz$-algebra
with certain properties which make it semisimple when tensored
with $\Cz$. Actually, it is almost a fusion algebra
(or table algebra \cite{zAeFmM}, or based ring \cite{Lus1})
in the most common sense except that it may have negative structure constants
(and is commutative).
The fact that the structure constants are all nonnegative in
fusion algebras has important consequences which we miss
in the case of $\Zz$-based rings. For example, the corresponding
$s$-matrix (an anologue of a character table) will always
have a column with only nonzero positive real entries which
are sometimes called degrees. This is not the case for example
for the algebras belonging to the imprimitive ``spetses''
(see \cite{mC1}, \cite{MUG} and \cite{mBgMjM} or \cite{gMSp}
for a definition of spetses).

There are other reasons why algebras with negative structure
constants are worth to be investigated. Some of the Fourier matrices
corresponding to the exceptional ``spetses'' may be constructed using factor
rings of some fusion algebras and in general, ``factor rings'' (see \ref{facring}
for an exact definition in the case of $\Zz$-based rings) will
often have negative structure constants.

For a given $\Zz$-based ring $R$, it is an interesting task to
find a fusion algebra $R'$ such that $R$ is a factor ring $R'/I$
for some ideal $I$. In some cases, it suffices to double the basis
(add ``negatives''), but in general, it is not clear if this
is possible. We prove the existence of such a ring $R'$ for
certain $\Zz$-based rings:

\begin{thm}
Let $R$ be a $\Zz$-based rng with $s$-matrix $s$. If $s$ is of the form
 \[ s_{ij} = \mu_j v_{ij} \]
where $0\ne \mu_j\in\Zz$ and $v_{ij}$ is a root of unity or $0$, then
$R$ is a factor ring of a pointed $\Zz$-algebra with nonnegative structure
constants (an algebra with a distinguished basis).
\end{thm}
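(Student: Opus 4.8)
The plan is to exploit the special shape of $s$ to replace every phase $v_{ij}$ by an honest basis direction, turning the signs that cause negativity into coordinates of an abelian group. The nonzero entries $v_{ij}$ lie in a finite cyclic group $\Gamma=\langle\zeta\rangle$ of some order $N$, so I would write $v_{ij}=\zeta^{e_{ij}}$ with $e_{ij}\in\Zz/N\Zz$ on the support of $v$. The key point is that the group algebra of a finite abelian group is pointed with nonnegative (in fact $\{0,1\}$) structure constants and its characters take values in $\Gamma$; thus, if the character data $(v_{ij})$ of $R$ can be matched with characters of a suitable group $\Gamma^m$, a group algebra will supply the nonnegativity for free, while the substitution $\zeta^a\mapsto$ (a genuine root of unity) will collapse everything back onto $R$.

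Concretely I would look for $R'$ inside the group algebra $\Zz[\Gamma^m]$ for an appropriate $m$. Using the semisimplicity and the orthogonality relations established earlier, I would fix characters $\psi_1,\dots,\psi_n$ of $\Gamma^m$ realizing the columns of $v$, then lift each basis element $b_i$ to a nonnegative integer combination $\widetilde b_i=\sum_{g}m^{(i)}_g\,g\in\Zz_{\ge0}[\Gamma^m]$ whose character values reproduce the prescribed data, $\psi_l(\widetilde b_i)=\mu_l v_{il}$ for all $l$, and take $R'$ to be the based subring generated by the $\widetilde b_i$ (equivalently, the span of the relevant orbit sums). Being a subring of a group algebra spanned by nonnegative combinations of group elements, $R'$ is pointed and has nonnegative integer structure constants; the Verlinde-type formula $N_{ij}^k=\sum_l s'_{il}s'_{jl}\overline{s'_{kl}}/d_l$ controlled by the orthogonality relations then guarantees that $R'$ is a genuine semisimple $\Zz$-based ring having the $\psi_l$ among its characters.

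With $R'$ in place I would produce the factor-ring map $\pi\colon R'\to R$ as the projection dual to the inclusion of the $n$ distinguished characters of $R$ into the full character set of $R'$, i.e. the homomorphism that evaluates the formal group elements at the chosen roots of unity and reads off the degrees $\mu_l$. I would check that $\pi$ is a surjective ring homomorphism carrying the distinguished basis of $R'$ into $\{\pm b_1,\dots,\pm b_n,0\}$, so that $I=\ker\pi$ is an ideal of the admissible type and $R\cong R'/I$ is a factor ring in the sense required.

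The main obstacle is the simultaneous existence of the nonnegative lifts $\widetilde b_i$ with the correct character values: one must choose $m$ and the characters $\psi_l$ so that the integers $\mu_l$ and the phases $v_{il}$ are realized together by nonnegative multiplicities over $\Gamma^m$, and so that the resulting $\pi$ respects both based structures (producing a factor ring, not merely a surjection of rings). I expect the way the degrees $\mu_l$ distribute over the fibres of $\pi$ to be the delicate point. It is prudent to settle first the case $v_{ij}\in\{0,\pm1\}$, where $\Gamma=\Zz/2\Zz$, the ambient group is $(\Zz/2\Zz)^{m}$, and the naive ``doubling of the basis'' already makes the positivity transparent; this is exactly the situation of the Hadamard matrices treated later.
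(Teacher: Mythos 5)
Your proposal is a plan rather than a proof: the step you defer (``the main obstacle is the simultaneous existence of the nonnegative lifts $\widetilde b_i$,'' ``the delicate point'') is not a technical refinement to be supplied later but the entire content of the theorem, and it is exactly what the paper's proof is devoted to. There is a second, structural gap hiding in the phrase ``the based subring generated by the $\widetilde b_i$'': a subring of $\Zz[\Gamma^m]$ generated by nonnegative elements does not automatically carry a distinguished free $\Zz$-basis with nonnegative structure constants. The products of your generators span $R'$ but are linearly dependent, and extracting a $\Zz$-basis from such a spanning set can reintroduce negative coefficients; no group action is specified whose orbit sums would be your generators, so the parenthetical ``orbit sums'' does not repair this. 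Finally, the hypothesis allows $v_{ij}=0$, while characters of the group $\Gamma^m$ never vanish; so single group elements cannot realize such columns, and for general nonnegative sums the existence question is again the unproven one. One is forced out of a group algebra and into a semigroup ring.

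What makes the theorem provable---and what the paper exploits---is that under the hypothesis the lifting problem is actually trivial: since the $i$-th column is $\mu_i v_i$, the element $b_i$ lifts to $\mu_i$ times the \emph{single} element $v_i$ of the finite multiplicative semigroup $H$ generated by the phase vectors under componentwise multiplication (finite because all entries are roots of unity or $0$), and the coordinate evaluations play the role of your $\psi_l$; they are semigroup characters, which is how the zeros are handled. The real work is then threefold, and all of it is absent from your proposal: (i) determine, for each $h\in H$, the minimal positive multiple $d_h\cdot h$ that the ring must contain as a basis vector---the paper does this by closing the set of maximal elements of the semigroup generated by the columns of $s$ under $\gcd$'s (from $a_1v$ and $a_2v$ form $\gcd(a_1,a_2)v$) and under products, and showing the process terminates; (ii) observe that with this basis every product of two basis vectors is a nonnegative integer multiple of a basis vector, which is where the nonnegative structure constants come from; (iii) prove that each of these new elements still lies in the $\Zz$-lattice spanned by the columns of $s$---this uses the linear independence of the columns and is what guarantees both that the evaluation map $R'\to R$ has the right kernel and that the columns $\mu_i v_i$ themselves survive as distinguished basis vectors, so that a subset of the basis of $R'$ maps onto the basis $B$, as Definition \ref{facring} demands. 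Two smaller corrections: your requirement that $\pi$ carry the distinguished basis into $\{\pm b_1,\dots,\pm b_n,0\}$ is neither needed nor generally achievable (the remaining basis vectors map to arbitrary integer combinations of the $b_i$); and your scalars are misindexed---the hypothesis attaches $\mu$ to the column, i.e.\ to the basis element, so the prescription must read $\psi_l(\widetilde b_i)=\mu_i v_{li}$ rather than $\mu_l v_{il}$, and this constancy of the scalar along each column is precisely what makes the single-element lift, and hence the whole construction, possible.
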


The proof is given in section \ref{idealsandfactors}.
In particular, it follows that the
ring associated to a Hadamard matrix is a factor ring of
a ring with nonnegative structure constants.

Motivated by exterior powers of character rings and by
Hadamard matrices, we generalize the definition of a $\Zz$-based
ring by dropping the assumption that the algebra should
have a one. It appears that most properties of table algebras
can be transfered to such ``$\Zz$-based rngs'' (the missing ``i''
is meant to suggest that it is a ``ring'' without an identity element).

The concept of closed subsets that has been introduced for table
algebras (see for example \cite{hiBphZ}, \cite{hB}, \cite{zAeFmM})
works in the case of $\Zz$-based rings too (\cite{mC}).
And it may also be generalized to $\Zz$-based rngs although
it is not trivial that the span of a subset
of the basis which is closed under multiplication is
again a $\Zz$-based rng. We prove (see section \ref{closedsubsets}):

\begin{thm}
If the $\Zz$-module spanned by a subset of the basis of a $\Zz$-based rng
is closed under multiplication, then it is a $\Zz$-based rng.
\end{thm}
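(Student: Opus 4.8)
The plan is to verify the axioms of a $\Zz$-based rng for $R_S:=\sum_{i\in I}\Zz b_i$, where $I$ indexes the given subset $S$ of the basis $\{b_i\}$ of $R$. Associativity, commutativity and integrality of the structure constants are inherited from $R$, and closure under multiplication is the hypothesis, so $R_S$ is already a commutative associative $\Zz$-algebra, free of rank $|S|$ with distinguished basis $S$. Since $R\otimes\Cz$ is semisimple and commutative it is reduced, and because $R/R_S$ is a free $\Zz$-module the inclusion $R_S\otimes\Cz\hookrightarrow R\otimes\Cz$ stays injective; hence $R_S\otimes\Cz$ is a reduced finite-dimensional commutative $\Cz$-algebra and therefore semisimple (its unitalization has trivial nilradical). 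In particular $R_S\otimes\Cz\cong\Cz^{|S|}$, so it has exactly $|S|$ characters $\psi_1,\dots,\psi_{|S|}$, each of the form $\psi_q=\chi_p|_{R_S}$ for the characters $\chi_p$ of $R$, and the square $s$-matrix $M:=(\psi_q(b_i))_{q,\,i\in I}$ is invertible.

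The one axiom that does not restrict for free is the one tying the involution $*$ to the multiplication, and here lies the real content. With the canonical nondegenerate symmetric form of $R$, normalised so that $\langle b_i,b_j\rangle=\delta_{i,j^*}$, one computes that the radical of the restriction $\langle\,,\rangle|_{R_S}$ is exactly $\sum_{i\in I,\ i^*\notin I}\Zz b_i$. Thus $R_S$ is closed under $*$ if and only if this restricted form is nondegenerate, and in that case $*$ restricts to an involution of $S$ and all remaining axioms pass to $R_S$. My plan is therefore to prove nondegeneracy of $\langle\,,\rangle|_{R_S}$; I note at once that the usual positivity argument is unavailable, since the structure constants of $R$ may be negative.

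To establish nondegeneracy I would invoke the orthogonality relations for the characters of $R$, which supply nonzero weights $\mu_p$ with $\sum_p\mu_p\,\chi_p(b_i)\overline{\chi_p(b_j)}=\delta_{ij}$. Grouping the indices $p$ into the blocks $B_q$ on which $\chi_p|_{R_S}$ equals $\psi_q$ (the remaining $p$ contribute $0$ on $R_S$) and putting $\lambda_q:=\sum_{p\in B_q}\mu_p$ yields, for $i,j\in I$,
\[ \sum_{q}\lambda_q\,\psi_q(b_i)\overline{\psi_q(b_j)}=\delta_{ij}, \]
that is, $M^{\dagger}D_\lambda M=\mathrm{Id}$ with $D_\lambda=\diag(\lambda_q)$. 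Since $M$ is invertible this forces $D_\lambda=(MM^{\dagger})^{-1}$ to be invertible, so every $\lambda_q\neq0$; one checks that $\lambda_q$ is precisely the value of $\langle\,,\rangle|_{R_S}$ on the $q$-th primitive idempotent, whence the restricted form is nondegenerate. Consequently $S$ is closed under $*$, and $R_S$ together with $*|_S$ and $\langle\,,\rangle|_{R_S}$ satisfies every axiom, completing the proof. I expect this step to be the main obstacle: because positivity fails, the nonvanishing of the block-weights $\lambda_q$ cannot be seen directly and must instead be extracted from the invertibility of the restricted $s$-matrix $M$, which in turn rests on the dimension count furnished by semisimplicity.
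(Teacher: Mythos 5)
Your argument takes a genuinely different route from the paper's, and its two main pillars are sound: where the paper derives $*$-stability of a closed subset from its Lemma \ref{lempotne0} (every $b\in B$ has a power with $\tau(b^m)\ne 0$, applied in Proposition \ref{RRtzB}), you derive it from nondegeneracy of the restricted trace form, proved via the completeness relation and invertibility of the restricted $s$-matrix $M$; and where the paper constructs the identity of the subalgebra explicitly, you produce one abstractly from reducedness (a subalgebra of $\Cz^n$ is reduced, its unitalization is then reduced, hence $R_S\otimes\Cz$ is an ideal in some $\Cz^m$ and so is itself a product of copies of $\Cz$). One misperception along the way: the weights in the completeness relation are $\mu_p=1/\sum_i|s_{pi}|^2>0$, because Hermitian orthogonality of the rows (Proposition \ref{orthrel} together with Lemma \ref{closedcomplex}) has nothing to do with the signs of the structure constants; so $\lambda_q=\sum_{p\in B_q}\mu_p>0$ at once, and your detour through invertibility of $M$, while valid, is not forced on you by the failure of positivity.

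There is, however, a genuine gap at the end, and it sits exactly where the paper does its real work. In the definition of a $\Zz$-based rng the trace is built from \emph{that ring's own identity}: for $(R_S,S)$ one must expand the identity $e'$ of $R_S\otimes\Cz$ as $e'=\sum_{i\in I}e'_ib_i$, set $\tau':=\sum_{i\in I}\bar e'_i\tau'_i$, and verify $\tau'(\tilde b_ib_j)=\delta_{ij}$. What you actually verify is the corresponding statement for the big ring's $\tau$, which is trivially inherited; your closing sentence silently identifies $\langle\,,\rangle|_{R_S}$ with the form of $(R_S,S)$, i.e.\ assumes $\tau'=\tau|_{R_S}$, equivalently that $e'$ is the truncation $\sum_{i\in I}e_ib_i$ of $e$. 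That identification is a real statement, and it is precisely what the paper's Proposition \ref{RRmodul} (the span of $B\setminus B'$ is a module over the subring) is used to establish in the proof of Theorem \ref{clsbisZ}. Fortunately your own displayed relation closes the hole: transposing $M^\dagger D_\lambda M=\mathrm{Id}$ gives $M^T D_\lambda\overline{M}=\mathrm{Id}$, so $M^T D_\lambda=\overline{M}^{-1}$; since $\psi_q(e')=1$ for all $q$, the coefficient vector of $e'$ is $M^{-1}\mathbf{1}$, whence $\overline{M^{-1}\mathbf{1}}=M^T D_\lambda\mathbf{1}$, which says exactly that $\tau'=\sum_q\lambda_q\psi_q$; combined with $\tau=\sum_p\mu_p\chi_p$ (available from the proof of Lemma \ref{closedcomplex}) and the fact that the characters vanishing on $R_S$ contribute nothing there, this yields $\tau'=\tau|_{R_S}$, and then $\tilde e'=e'$ follows from uniqueness of the identity because $*$ restricts to a conjugate-linear automorphism of $R_S\otimes\Cz$ once $S$ is $*$-stable. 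As written, though, your proof stops one essential verification short of the theorem.
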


An $n\times n$ Hadamard matrix can be interpreted as an $s$-matrix
(after an adequate rescaling), it yields a $\Zz$-algebra which is
a $\Zz$-based rng.
If $n=4k$ with $k>1$ odd, then this algebra will always have some
negative structure constants. It is a factor ring of 
a subalgebra of the group
algebra $\Zz[Z_2^{n-2}]$, where $Z_2$ denotes the cyclic group with
two elements.

In the case of Hadamard matrices, the structure constants are a
subset of the numbers used to define profiles (see \cite{wdW}, 9.3).
Profiles are useful to test if Hadamard matrices are equivalent.
The structure constants contain enough information to reconstruct
the matrix. This is a consequence of the theorem of Wedderburn-Artin
in the case of arbitrary $\Zz$-based rngs, and for Hadamard matrices
this reduces to the fact that diagonalizable commuting matrices
may be diagonalized simultaneously. Of course the resulting matrix
is unique only up to permutations of rows and columns, which does
not change the equivalence class. Multiplying rows or columns by
signs is a more complicated operation and it is less obvious when
two algebras are equivalent via sign changes, although there are good
algorithms to find this out.

The structure of the paper is as follows:

In the first section we introduce the $\Zz$-based rngs and its
$s$-matrix and prove orthogonality relations. Then we define closed
subsets for $\Zz$-based rngs and prove that for any basis element $b$,
there exists a power $b^m$ which has non zero value under the symmetrizing
trace $\tau$. This enables us to prove that closed subsets span
$\Zz$-based rngs. We also briefly describe an algorithm for finding
closed subsets.

Then we study ideals and factor rings, the objective being to realize
$\Zz$-based rngs as factor rings of fusion algebras. We give an example
of such ideals for the case of the representation ring of the quantum
double of a finite group and cite some examples of factor rings
of algebras coming from Kac-Moody algebras in the following section.
In the last section we apply our theory to the rings associated to
Hadamard matrices.

\section{$\Zz$-based rings without one}\label{zbrwo}

In \cite{mC1} the so called $\Zz$-based ring has been introduced.
This is a commutative $\Zz$-algebra $R$ given with a basis $B$ and an involution
$\sim$ with certain properties which enable us to show that
$R_\Cz:=R\otimes_\Zz \Cz$ is semisimple.
One important axiom in the definition is that the unit of the algebra
is an element of $B$. We would like to preserve the property that
$R_\Cz$ is semisimple without requiring $1\in B$.

In the following, we embed $R$ into $R_\Cz$ by
$r\mapsto r\otimes 1$ and denote $r\otimes 1$ by $r$
(this is possible because $R$ is a free $\Zz$-module).

\subsection{Definition}

Let $R$ be a finitely generated commutative $\Zz$-algebra
which is a free $\Zz$-module with basis
$B=\{b_0,\ldots,b_{n-1}\}$ and \defst{structure constants}
\[ b_i b_j = \sum_m N_{ij}^m b_m, \quad N_{ij}^m\in \Zz \]
for $0\le i,j < n$.

Assume that there is an $e\in R_\Cz$ such that
$ea=a$ for all $a\in R_\Cz$. In this case, $e$ is uniquely determined
by this condition.

Let $\tau_i : R \rightarrow \Cz$ be defined to be the
linear extension of $\tau_i(b_j)=\delta_{i,j}$.
If $e = \sum_i e_i b_i$ in $R_\Cz$ with $e_i\in\Cz$, then define
\[ \tau := \sum_i \bar e_i \tau_i, \]
where $\bar{}$ means complex conjugation.
Now assume the existence of an involution $\sim : R \rightarrow R$
which is a $\Zz$-module homomorphism such that
\[ \quad \tilde B= B,\quad N_{\tilde i\tilde j}^m=N_{i j}^{\tilde m},
\quad \tau(\tilde b_i b_j) = \delta_{i,j} \]
for all $0\le i,j,m < n$,
where $\tilde i$ is the index with $\tilde{b_i}=b_{\tilde i}$.

We extend $\sim$ and $\tau$ to $R_\Cz$ by
\[ \widetilde{r \otimes z} := \tilde r \otimes \bar z,
\quad \tau(r \otimes z):=z \tau(r) \]
for $r\in R$, $z \in \Cz$ and use the same symbols for the extended maps.
We require the following additional property of $\sim$:
\[ \tilde e = e. \]

\begin{defi}
We call $(R,B)$ with $e$ and $\sim$ as above a \defst{$\Zz$-based rng}.
\end{defi}

The definition of $\tau$ is motivated by $\tau(\tilde b_i b_j) = \delta_{i,j}$,
because then
\[ \tau(e)=\tau(\tilde e e)=\tau\big(\sum_{i,j} \bar e_i e_j \tilde b_i b_j\big)
=\sum_i \bar e_i e_i \]
and
\[ \tau(e)=\sum_i \bar e_i \tau_i(e) = \sum_i \bar e_i e_i. \]
Using $e=\tilde e$ we also get $\bar e_i=e_{\tilde i}$ and hence
\[ \tau(\tilde r)=\sum_j \bar e_j\tau_j(\sum_i\bar\mu_i\tilde b_i)
= \sum_j\bar e_j \bar\mu_{\tilde j}=\sum_j e_j\bar\mu_j
= \overline{\sum_j \bar e_j\tau_j(\sum_i\mu_i b_i)}
= \overline{\tau(r)} \]
for any $r=\sum_i\mu_i b_i \in R_\Cz$.

The map
\[\langle\:,\:\rangle : R\times R\rightarrow \Zz,\quad
\langle r,r' \rangle:=\tau(\tilde r r')\]
for $r,r'\in R$ behaves like an inner product with
orthonormal basis $B$ because $r = \sum_{b\in B} \langle b,r \rangle b$
for all $r\in R$. Then $\tilde B$ is the basis
dual to $B$ with respect to this inner product.

It is easily seen that
extending $\langle\:,\:\rangle$ to $R_\Cz\times R_\Cz$ yields
a hermitian positive definite sesquilinear form,
so it is non degenerate.

\begin{prop}
Let $R$ be a $\Zz$-based rng.
Then the algebra $R_\Cz=R\otimes_\Zz\Cz$ is semisimple.
\end{prop}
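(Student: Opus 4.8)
The plan is to exhibit $R_\Cz$ as an algebra of simultaneously diagonalizable operators on a finite-dimensional Hilbert space, and to read off semisimplicity from the fact that such an algebra has trivial radical. Since $R_\Cz$ is a finite-dimensional commutative $\Cz$-algebra with identity $e$, it is Artinian, and it is semisimple precisely when its Jacobson radical (which here coincides with the nilradical) vanishes. So it suffices to show that $R_\Cz$ contains no nonzero nilpotent element.

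First I would use the Hermitian positive definite form $\langle\,,\,\rangle$ introduced above to regard $R_\Cz$ as a finite-dimensional Hilbert space with orthonormal basis $B$. Consider the left regular representation $L\colon R_\Cz\to\mathrm{End}_\Cz(R_\Cz)$, $r\mapsto L_r$ with $L_r(x)=rx$. It is faithful, since $L_r(e)=re=r$ shows that $L_r=0$ forces $r=0$.

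The key step is to identify the adjoint of $L_r$ with respect to $\langle\,,\,\rangle$. The relation $N_{\tilde i\tilde j}^m=N_{ij}^{\tilde m}$ makes $\sim$ multiplicative, i.e.\ $\widetilde{b_ib_j}=\tilde b_i\tilde b_j$, so that on $R_\Cz$ one has $\widetilde{rs}=\tilde r\tilde s$. Using this together with commutativity, one computes
\[ \langle rs,t\rangle=\tau(\widetilde{rs}\,t)=\tau(\tilde r\tilde s t)=\tau(\tilde s\,\tilde r t)=\langle s,\tilde r t\rangle, \]
so that $L_r^\ast=L_{\tilde r}$. In particular every $L_r$ is normal, because commutativity gives $L_rL_{\tilde r}=L_{r\tilde r}=L_{\tilde r r}=L_{\tilde r}L_r$.

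Finally I would invoke the finite-dimensional spectral theorem. If $x\in R_\Cz$ is nilpotent, then $L_x$ is a nilpotent normal operator; a normal operator all of whose eigenvalues vanish is $0$, so $L_x=0$, and faithfulness gives $x=L_x(e)=0$. Hence $R_\Cz$ has no nonzero nilpotents and is semisimple. Equivalently, the commuting family $\{L_r\}$ consists of normal operators and is therefore simultaneously unitarily diagonalizable, identifying $R_\Cz$ with a subalgebra of a diagonal matrix algebra. I expect the only real content to be the verification that $L_{\tilde r}$ is the Hilbert-space adjoint of $L_r$; once $L$ is recognized as a $\ast$-representation on a genuine inner product space, semisimplicity is immediate from the spectral theorem.
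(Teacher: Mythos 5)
Your proof is correct, and its computational core is the same as the paper's: everything rests on the identity $\langle rs,t\rangle=\langle s,\tilde r t\rangle$, i.e.\ on the fact that the involution $\sim$ implements adjoints for the regular representation with respect to the positive definite hermitian form $\langle\:,\:\rangle$. (You take care to derive the intermediate step $\widetilde{rs}=\tilde r\tilde s$ from the axiom $N_{\tilde i\tilde j}^m=N_{ij}^{\tilde m}$, which the paper's proof uses without comment.) Where you diverge is in how semisimplicity is extracted from this identity. The paper argues module-theoretically and very briefly: for any ideal $\mathfrak I$ of $R_\Cz$, the adjoint relation shows that $\mathfrak I^\perp$ is again an ideal, so every ideal is a direct summand and $R_\Cz$ is semisimple as a module over itself. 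You instead argue through the radical: $L_r^\ast=L_{\tilde r}$ together with commutativity makes every $L_r$ normal, the spectral theorem forces a nilpotent normal operator to vanish, and faithfulness of the regular representation (which, note, uses the existence of the identity $e$ --- a genuine axiom here, not a triviality) then shows the nilradical of $R_\Cz$ is zero; for a finite-dimensional commutative algebra with identity this is equivalent to semisimplicity. Your route is longer, but it buys something the paper only obtains afterwards via Wedderburn--Artin in its discussion of the $s$-matrix: the simultaneous unitary diagonalizability of the family $\{L_r\}$, which is precisely the identification $R_\Cz\cong\Cz^n$ underlying the $s$-matrix and the orthogonality relations. The paper's argument, by contrast, is shorter and avoids any appeal to the spectral theorem, needing only that the form is nondegenerate and positive definite.
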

\begin{proof}
If $\mathfrak I$ is an ideal in $R_\Cz$, then the orthogonal complement
\[ \mathfrak I^\perp := \{ r \in R_\Cz \mid \langle r,r' \rangle = 0
 \quad \forall r' \in \mathfrak I \} \]
is a left ideal too:
\[ \langle tr,r' \rangle = \tau(\widetilde{tr} r') =
\tau(\tilde r \tilde t r') = \langle r,\tilde t r' \rangle = 0 \]
for all $r \in \mathfrak I^\perp$, $t \in R_\Cz$ and $r' \in \mathfrak I$.
The claim follows.
\end{proof}

\subsection{$s$-matrix}\label{smat}

Now $R_\Cz$ is a commutative semisimple algebra over an algebraically
closed field, so by the theorem of Wedderburn-Artin it is isomorphic
as a $\Cz$-algebra to $\Cz^n$ with componentwise multiplication.
By choosing $B$ as a basis for $R_\Cz$ and the canonical basis
$\{v_i\}_i$ with $v_i v_j=\delta_{i,j} v_i$ for all $i,j$ for $\Cz^n$, an
isomorphism $\varphi$ is described by a matrix $s$ which we will call
an \defst{$s$-matrix of} $(R,B)$:
\[ \varphi(b_i) = \sum_k s_{ki} v_k. \]
Remark that this matrix depends on the choice of the isomorphism
$\varphi$. Another isomorphism would differ from $\varphi$ by
a $\Cz$-algebra automorphism of $\Cz^n$,
so an $s$-matrix is unique up to a permutation of rows.

The algebra $R$ is the $\Zz$-lattice spanned by the columns of $s$.
Its multiplication is componentwise multiplication of vectors and
the involution $\sim$ corresponds to complex conjugation on the
columns (see lemma \ref{closedcomplex}). The structure constants are
(Verlinde's formula)
\begin{equation}\label{VF}
N_{ij}^m = \sum_l s_{li} s_{lj} s'_{ml}
\end{equation}
where $s'=s^{-1}$ (this is just the linear decomposition of the
product of two columns with respect to the columns).

\subsection{Orthogonality relations}

The rows of $s$ are the images of $B$ under the one-dimensional representations
of $R$ because $s_{ki}s_{kj}=\sum_l N_{ij}^l s_{kl}$ for all $k,i,j$.
They are orthogonal. To prove this, we need:

\begin{lem}\label{Rclin}
Let $E_1, E_2$ be $R_\Cz$-modules and $h : E_1 \rightarrow E_2$ be a $\Cz$-linear
map. Then the map $h_0 : E_1 \rightarrow E_2, w \mapsto \sum_{i \in I} b_i h(\tilde b_i w)$ is
$R_\Cz$-linear.
\end{lem}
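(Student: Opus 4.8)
The plan is to prove $R_\Cz$-linearity of $h_0$ directly on the basis $B$: since $R_\Cz$ is spanned over $\Cz$ by $B$ and $h_0$ is visibly $\Cz$-linear, it suffices to check that $h_0(b_k w) = b_k\, h_0(w)$ for every index $k$ and every $w \in E_1$. This is the familiar ``averaging'' argument from representation theory; the only real work is to control the behaviour of the structure constants under the involution.

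First I would expand both sides using the structure constants. On one hand, $b_k\, h_0(w) = \sum_{q} b_k b_q\, h(\tilde b_q w) = \sum_{p,q} N_{kq}^p\, b_p\, h(\tilde b_q w)$. On the other hand, writing $\tilde b_p = b_{\tilde p}$ and then reindexing the argument of $h$ via $b_q = \tilde b_{\tilde q}$, one gets $h_0(b_k w) = \sum_{p} b_p\, h(\tilde b_p b_k w) = \sum_{p,q} N_{\tilde p k}^{\tilde q}\, b_p\, h(\tilde b_q w)$. Comparing the coefficients of $b_p\, h(\tilde b_q w)$, the desired identity $h_0(b_k w) = b_k\, h_0(w)$ reduces to the purely combinatorial claim $N_{\tilde p k}^{\tilde q} = N_{kq}^p$ for all $p,q,k$.

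The key step is then to establish the Frobenius-type reciprocity $N_{ij}^m = N_{\tilde i m}^j$. I would obtain it from the description $N_{ij}^m = \langle b_m, b_i b_j\rangle = \tau(\tilde b_m b_i b_j)$ together with the fact that the $N_{ij}^m$ are real integers: applying $\tau(\tilde r) = \overline{\tau(r)}$ (established above) and that $\sim$ is an algebra homomorphism (a consequence of the axiom $N_{\tilde i\tilde j}^m = N_{ij}^{\tilde m}$) gives $N_{ij}^m = \overline{N_{ij}^m} = \tau(\tilde b_j \tilde b_i b_m)$, and expanding $\tilde b_i b_m = \sum_p N_{\tilde i m}^p b_p$ against $\tau(\tilde b_j\,b_p) = \delta_{jp}$ yields $N_{ij}^m = N_{\tilde i m}^j$.

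Finally I would combine reciprocity with commutativity to verify the claim of the second paragraph: one application of $N_{ij}^m = N_{\tilde i m}^j$ gives $N_{\tilde p k}^{\tilde q} = N_{p\,\tilde q}^{\,k}$, and then commutativity together with a second application of reciprocity gives $N_{p\,\tilde q}^{\,k} = N_{\tilde q\, p}^{\,k} = N_{qk}^p = N_{kq}^p$, as required. The main obstacle here is not conceptual but bookkeeping: keeping the indices straight under the involution. The reciprocity relation $N_{ij}^m = N_{\tilde i m}^j$ is the single fact that forces the two expansions to coincide, and everything else is formal.
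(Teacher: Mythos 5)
Your proof is correct and takes essentially the same route as the paper: the heart of both arguments is the reciprocity relation $N_{ij}^m = N_{\tilde i m}^j$ (equivalently $N_{ki}^j = N_{\tilde k j}^i$), and you establish it exactly as the paper does, via $\tau(\tilde b_m b_i b_j) = N_{ij}^m$, multiplicativity of $\sim$, and $\tau(\tilde r) = \overline{\tau(r)}$. The only difference is bookkeeping: you verify $h_0(b_k w) = b_k h_0(w)$ on basis elements and reduce to a term-by-term coefficient identity, while the paper runs the same expansion directly for a general $r = \sum_k \mu_k \tilde b_k$.
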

\begin{proof}
For $j,k,m\in \{0,\ldots,n-1\}$, we have
\[\tau(\tilde b_m \sum_i N_{jk}^i b_i) = \sum_i N_{jk}^i \tau(\tilde b_m b_i) = N_{jk}^m, \]
so $\tau(b_m b_j b_k) = N_{jk}^{\tilde m}$.
Because of $\tau(\tilde r) = \overline{\tau(r)}$, we get
\[ N_{\tilde j k}^i = \tau(\tilde b_i \tilde b_j b_k)
= \overline{\tau(\tilde b_k b_j b_i)} = \overline{N_{ji}^k} = N_{ji}^k.\]
Now we can conclude that $h_0$ is a homomorphism of $R_{\Cz}$-modules:
for $r = \sum_k \mu_k \tilde b_k \in R_\Cz$ and $w\in E_1$, we have
\[ h_0(r \cdot w) = \sum_i b_i h(\tilde b_i \sum_k \mu_k \tilde b_k w) \stackrel{(*)}{=}
\sum_i b_i h(\sum_k \mu_k \sum_j N_{ki}^j \tilde b_j w) \]
\[ = \sum_k \mu_k \sum_{j,i} N_{ki}^j b_i h( \tilde b_j w) = \sum_k \mu_k \sum_{j,i} N_{\tilde k j}^i
b_i h( \tilde b_j w) \]
\[ = \sum_k \mu_k \tilde b_k \sum_{j} b_j h( \tilde b_j w) = r \cdot h_0(w), \]
where $(*)$ holds because
$ \tilde b_i \tilde b_k = \widetilde{b_k b_i} = \widetilde{\sum_j N_{ki}^j b_j}
= \sum_j N_{ki}^j \tilde b_j$.
\end{proof}

This implies:

\begin{prop}\label{orthrel}
Let $R$ be a $\Zz$-based rng.
Then its $s$-matrix has orthogonal rows.
\end{prop}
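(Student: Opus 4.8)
The plan is to prove orthogonality of the rows of $s$ by applying Lemma~\ref{Rclin} in a cleverly chosen setting, following the classical strategy used for orthogonality of characters in semisimple algebras. The rows of $s$ correspond to the one-dimensional representations (characters) of $R_\Cz$; since $R_\Cz\cong\Cz^n$ is a product of $n$ copies of $\Cz$, each row $k$ gives the algebra homomorphism $\chi_k:R_\Cz\to\Cz$ with $\chi_k(b_i)=s_{ki}$. Orthogonality of two distinct rows then amounts to a relation of the form $\sum_i \overline{s_{ki}}\, s_{k'i}=0$ for $k\ne k'$ (with respect to the appropriate inner product), which is exactly the statement that the two characters are ``orthogonal'' as vectors indexed by the basis.

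First I would fix two one-dimensional $R_\Cz$-modules $E_1$ and $E_2$, namely $\Cz$ with $R_\Cz$ acting through $\chi_k$ and through $\chi_{k'}$ respectively, and take $h:E_1\to E_2$ to be an arbitrary $\Cz$-linear map, i.e.\ multiplication by a scalar. Lemma~\ref{Rclin} then guarantees that $h_0(w)=\sum_i b_i\, h(\tilde b_i w)$ is $R_\Cz$-linear. Evaluating $h_0$ on $w=1$, each factor acts by a scalar: $\tilde b_i$ acts on $E_1$ by $\chi_k(\tilde b_i)=s_{k\tilde i}$, and the leading $b_i$ acts on $E_2$ by $\chi_{k'}(b_i)=s_{k'i}$, so that $h_0$ is multiplication by the scalar $\sum_i s_{k'i}\,s_{k\tilde i}$. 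The key point is that, being an $R_\Cz$-linear map between two \emph{nonisomorphic} simple modules, $h_0$ must be zero by Schur's lemma; hence $\sum_i s_{k'i}\,s_{k\tilde i}=0$ whenever $\chi_k\ne\chi_{k'}$, i.e.\ whenever $k\ne k'$. Using the relation $s_{k\tilde i}=\overline{s_{ki}}$ coming from the fact that $\sim$ corresponds to complex conjugation on rows (which follows from $\bar e_i=e_{\tilde i}$ and the compatibility of $\sim$ with the characters), this is precisely the desired orthogonality $\sum_i \overline{s_{ki}}\, s_{k'i}=0$.

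The main obstacle I anticipate is the bookkeeping needed to verify that $h_0$ genuinely acts as the claimed scalar and to pin down the conjugation identity $s_{k\tilde i}=\overline{s_{ki}}$. One must check that applying $\sim$ to a basis element corresponds under $\varphi$ to complex-conjugating the corresponding row entry, which requires tracing through how $\sim$ interacts with the Wedderburn decomposition $R_\Cz\cong\Cz^n$; this is the content alluded to in the reference to the forthcoming Lemma~\ref{closedcomplex}. Once that identity is in hand, the argument closes cleanly: distinct rows correspond to distinct simple modules, Schur's lemma forces the associated intertwiner to vanish, and the vanishing is exactly the orthogonality relation. The invocation of Lemma~\ref{Rclin} is what makes the Schur-type argument available in this non-unital, possibly-negative-structure-constant setting, since it supplies the $R_\Cz$-linear averaging map that plays the role of the usual group-algebra projection.
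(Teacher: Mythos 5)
Your core argument is exactly the paper's: you use Lemma~\ref{Rclin} to produce the intertwiner $h_0(w)=\sum_i b_i\,h(\tilde b_i w)$ between two one-dimensional modules, observe that it is multiplication by the scalar $\sum_i \chi_{k'}(b_i)\chi_k(\tilde b_i)$, and kill it with Schur's lemma when the modules are non-isomorphic (distinct rows do give distinct characters, since $s$ is invertible). This establishes $\sum_i s_{k'i}\,s_{k\tilde i}=0$ for $k\ne k'$, which is precisely the orthogonality relation that the paper's own proof establishes and then works with; up to swapping the roles of $E_1$ and $E_2$, the two proofs coincide.

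The one genuine problem is your final step. To pass from $\sum_i s_{k'i}\,s_{k\tilde i}=0$ to the hermitian statement $\sum_i \overline{s_{ki}}\,s_{k'i}=0$ you invoke $s_{k\tilde i}=\overline{s_{ki}}$, and you justify this by appealing to Lemma~\ref{closedcomplex}. But in the paper that lemma comes \emph{after} Proposition~\ref{orthrel}, and its proof begins by using the orthogonality of the rows; citing it here is circular, and the identity $\bar e_i=e_{\tilde i}$ by itself says nothing about how $\sim$ acts on the rows of $s$. If you do want the hermitian form at this stage, there is a non-circular bridge: since the structure constants are (real) integers, both $b_i\mapsto\overline{\chi(b_i)}$ and $b_i\mapsto\chi(\tilde b_i)$, extended linearly, are again characters of $R_\Cz$, hence rows of $s$; your twisted orthogonality then shows that each row of $s\bar s^T$ has at most one nonzero entry, and since the diagonal entries $\sum_i|s_{ki}|^2$ are strictly positive, that nonzero entry must be the diagonal one. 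This gives $s\bar s^T$ diagonal, and the identity $\tilde b_i=\bar b_i$ of Lemma~\ref{closedcomplex} then comes out as a corollary --- which is the logical order the paper follows.
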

\begin{proof}
Let $E_1,E_2$ be one dimensional $R_\Cz$-modules and $\chi_1,\chi_2$
be the corresponding characters (trace of the operation of $R$ on $E_1$, $E_2$).
So $b\in B$ acts on $E_1$ respectively $E_2$ as the scalar $\chi_1(b)$
respectively $\chi_2(b)$.
For an arbitrary $\Cz$-linear map $h : E_2 \rightarrow E_1$,
lemma \ref{Rclin} states that the map $h_0$ given by
\[ h_0(w) = \sum_{b\in B} \chi_1(b) h(\chi_2(\tilde b) w) =
\Big(\sum_{b \in B} \chi_1(b) \chi_2(\tilde b)\Big) h(w) \]
is a homomorphism of $R_\Cz$-modules from $E_2$ to $E_1$.
But these modules are irreducible, so by the lemma of Schur,
$h_0$ is either an isomorphism or the zero map.
This holds for all $\Cz$-linear maps $h$, hence
\[ \sum_{b\in B} \chi_1(b) \chi_2(\tilde b) \]
is zero if $E_1\ncong E_2$, i.e. $\chi_1\ne \chi_2$.
On the other hand, if $h_0$ is an isomorphism, then the above sum can
not be zero, because there is at least one element in $E_2$
which is not mapped to zero.
\end{proof}

\begin{defi}
By normalizing the rows of the $s$-matrix we get an
orthonormal matrix called the \defst{Fourier matrix} of $(R,B)$.
\end{defi}
The Fourier matrix is unique if we choose the positive
square roots of the rows in the normalization.

\subsection{Examples}

\begin{exm}
Of course, a $\Zz$-based ring is also a $\Zz$-based rng. So
for example representation rings of certain Hopf algebras or
exterior powers of group rings of cyclic groups are $\Zz$-based rngs
(see \cite{mC1} or \cite{mC}, 5.1).
\end{exm}

\begin{exm}
Let $s=kI$ be a scalar matrix, $k\in \Zz$. Then
$b_i b_j = k \delta_{ij} b_i$.
In $R_\Cz$, $e=\sum_i \frac{1}{k} b_i$. The involution
$\sim$ is the identity map.
Further, $\tau = \sum_i \frac{1}{k} \tau_i$ and indeed,
$\tau(b_i b_i) = 1$ for all $i$.
\end{exm}

\begin{exm}\label{exkkk}
Let $s$ be of the form
\[ s = \begin{pmatrix}
k & \\
\vdots & * \\
k &
\end{pmatrix}. \]
Then $e = \frac{1}{k} b_0$, $\tau=\frac{1}{k} \tau_0$. Suppose
$\sim$ is the identity map. Then $\tau(b_i b_i)=1$ for all $i$.
So $\tau_0(b_i b_i)=k$ for all $i$.
\end{exm}

\begin{exm}
The matrix
\[ s:=\begin{tiny} \left( \begin{array}{cccccc}
-2 & 0 & 2 & -2 & 0 & -2 \\
0 & -2 & -2 & -2 & -2 & 0 \\
2 & -2 & 0 & 0 & 2 & -2 \\
-2 & -2 & 0 & 0 & 2 & 2 \\
0 & -2 & 2 & 2 & -2 & 0 \\
-2 & 0 & -2 & 2 & 0 & -2 \\
\end{array} \right) \end{tiny} \]
is the exterior square of the character table of
$\Zz/2\Zz\times\Zz/2\Zz$. By the formula of Verlinde, it
defines structure constants for a $\Zz$-based rng.
We have $\tau = -\frac{1}{4}(\tau_0+2\tau_1+\tau_5)$.
\end{exm}

\begin{exm}
Consider the matrix
\[ s:=\begin{tiny}
\left( \begin{array}{cccc}
1 & 1 & 1 & 1 \\
1 & 0 & 1 & 0 \\
1 & 1 & 0 & 0 \\
1 & 0 & 0 & 0 \\
\end{array} \right)
\end{tiny}. \]
The lattice spanned by the columns of $s$ is closed under
componentwise multiplication but it is not a $\Zz$-based rng
because there is no adequate involution $\sim$. Remark
that $s$ is not orthogonal. This is the ``$s$-matrix'' of
the monoid ring $\Zz[(\Zz/2\Zz)^2]$ where $(\Zz/2\Zz)^2$
is a monoid by componentwise multiplication.
\end{exm}

\subsection{Complex conjugation on the columns of $s$}

\begin{lem}\label{closedcomplex}
Let $s$ be the $s$-matrix of a $\Zz$-based rng $R$.
Then the set of columns $\{s_0,\ldots,s_{n-1}\}$ of $s$
is closed under complex conjugation and $s_{\tilde i}=\bar s_i$ for all $i$.
\end{lem}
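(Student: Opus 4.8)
The plan is to transport the involution $\sim$ through the isomorphism $\varphi$, identify it on $\Cz^n$ with entrywise complex conjugation, and then read off $s_{\tilde i}=\bar s_i$ from the fact that the $i$-th column is $\varphi(b_i)$.

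First I would record that $\sim$ extends to a conjugate-linear ring automorphism of $R_\Cz$: it is conjugate-linear by the definition $\widetilde{r\otimes z}=\tilde r\otimes\bar z$, it is an involution, and it is multiplicative since $\widetilde{b_ib_j}=\sum_m N_{ij}^m b_{\tilde m}=\sum_m N_{\tilde i\tilde j}^m b_m=\tilde b_i\tilde b_j$, using the axiom $N_{\tilde i\tilde j}^m=N_{ij}^{\tilde m}$. Consequently the map $\psi$ on $\Cz^n$ determined by $\varphi(\tilde r)=\psi(\varphi(r))$ is a conjugate-linear ring automorphism of $\Cz^n$. Any such map sends primitive idempotents to primitive idempotents, so it permutes the canonical basis: there is a permutation $\pi$ with $\psi(v_k)=v_{\pi(k)}$, and hence $\psi\big(\sum_k z_k v_k\big)=\sum_k\bar z_k v_{\pi(k)}$ for all $z_k\in\Cz$.

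The key step is to show $\pi=\id$. Pulling $\psi$ back, the primitive idempotents $f_k:=\varphi^{-1}(v_k)$ of $R_\Cz$ satisfy $\tilde f_k=f_{\pi(k)}$. I would then exploit positive definiteness of the form $\langle\,,\,\rangle$ on $R_\Cz$. Since the $f_k$ are orthogonal idempotents, $f_{\pi(k)}f_k=\delta_{\pi(k),k}f_k$, so
\[ 0<\langle f_k,f_k\rangle=\tau(\tilde f_k f_k)=\tau(f_{\pi(k)}f_k)=\delta_{\pi(k),k}\,\tau(f_k). \]
If $\pi(k)\ne k$ the right-hand side vanishes, contradicting positive definiteness; hence $\pi(k)=k$ for every $k$.

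With $\pi=\id$, the map $\psi$ is exactly entrywise complex conjugation on $\Cz^n$. Applying $\varphi$ to $\tilde b_i=b_{\tilde i}$ then gives $\sum_k s_{k\tilde i}v_k=\varphi(b_{\tilde i})=\psi(\varphi(b_i))=\sum_k\overline{s_{ki}}\,v_k$, so $s_{\tilde i}=\bar s_i$. Since $\sim$ is a bijection of the index set, closure of $\{s_0,\dots,s_{n-1}\}$ under complex conjugation follows at once. The only delicate point is the positivity argument that eliminates the permutation $\pi$; the rest is the routine transport of structure through $\varphi$.
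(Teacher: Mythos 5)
Your proof is correct, but it takes a genuinely different route from the paper's. The paper argues computationally with the $s$-matrix itself: writing $D:=s\bar s^T$ (diagonal by Proposition \ref{orthrel}) and $d_j:=D_{jj}^{-1}$, it expresses the coefficients of the identity as $e_i=\sum_j \bar s_{ji}d_j$ and then computes $\tau(b_i\bar b_u)$ directly, using orthogonality of the rows of $s$ to collapse the triple sum to $\sum_m d_m s_{mi}\bar s_{mu}=\delta_{iu}$; comparison with the axiom $\tau(\tilde b_i b_u)=\delta_{iu}$ and non-degeneracy of the pairing then forces $\tilde b_i=\bar b_i$. You instead work structurally: you verify that $\sim$ is a conjugate-linear algebra automorphism of $R_\Cz$ (a clean consequence of $N_{\tilde i\tilde j}^m=N_{ij}^{\tilde m}$, correctly checked), transport it to a conjugate-linear automorphism $\psi$ of $\Cz^n$, note that $\psi$ permutes the primitive idempotents, and kill the permutation with positive definiteness of $\langle\,,\,\rangle$ --- which the paper does establish before this lemma, so there is no circularity (your argument notably does not need Proposition \ref{orthrel} at all). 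The trade-off: the paper's computation yields the useful byproduct $e_i=\sum_j\bar s_{ji}d_j$ and leans on machinery already in hand, whereas your argument is coordinate-free, isolates the conceptual reason the involution must be conjugation (a positivity-preserving conjugate-linear automorphism of a split semisimple algebra fixes each Wedderburn component), and would adapt more readily to settings where one has semisimplicity and a positive form but no row orthogonality. Both the multiplicativity check and the idempotent computation $f_{\pi(k)}f_k=\delta_{\pi(k),k}f_k$ are sound, so the proof stands as written.
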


\begin{proof}
The matrix $s$ is orthogonal, so $D:=s\bar s^T$ is diagonal.
Let $d$ be the vector with entries $d_i:=D_{ii}^{-1}$ and
denote by $\bar b$ the complex conjugate of a vector
$b\in\Cz^n\cong R_\Cz$.
Then we have (remember that $e=\sum_i e_i b_i$,
$\tau=\sum_i \bar e_i \tau_i$)
\[ e_i=\sum_j \bar s_{ji} d_j \]
and hence
\[ \tau(b_i \bar b_u) = \sum_j \bar e_j \tau_j(b_i \bar b_u)
= \sum_j \bar e_j \sum_l s_{li} \bar s_{lu} \bar s_{lj} d_j
= \sum_{j,m,l} s_{mj} d_m s_{li} \bar s_{lu} \bar s_{lj} d_j.\]
Since the rows $l$ and $m$ of $s$ are orthogonal by proposition
\ref{orthrel},
\[ \tau(b_i \bar b_u) = \sum_m d_m s_{mi} \bar s_{mu} = \delta_{iu} \]
which implies $\tilde b_i = \bar b_i$ for all $i$.
\end{proof}

Let $\zeta$ be a primitive $q$-th root of unity, $q\in\Nz$.
We will need the following proposition later on.

\begin{prop}\label{allmuequal}
Let $(R,B)$ be a $\Zz$-based rng with $s$-matrix $s$ with
columns of the form $s_i=\mu_i v_i$, $v_i\in C'^n$,
$C'=\{\zeta,\ldots,\zeta^q\}$, $\mu_i\in\Zz$.
Then $|\mu_i|=|\mu_j|$ for all $i,j$.
\end{prop}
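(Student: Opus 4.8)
The plan is to pass everything through the algebra isomorphism $\varphi : R_\Cz \xrightarrow{\sim} \Cz^n$ of Section \ref{smat}, under which $\varphi(b_i)$ is the $i$-th column $s_i$ of $s$, multiplication is componentwise, and—because $\varphi$ is a unital isomorphism—$\varphi(e)$ is the unit of $\Cz^n$, namely the all-ones vector $\mathbf{1}$. The feature of the hypothesis that I would lean on is that each entry of $v_i$ lies in $C' = \{\zeta,\ldots,\zeta^q\}$ and is therefore a genuine root of unity, with no zeros permitted. Consequently every entry $s_{li} = \mu_i v_{li}$ satisfies $|s_{li}| = |\mu_i|$, independently of the row index $l$.

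First I would compute $b_i\tilde b_i$ inside $R_\Cz$ through $\varphi$. By Lemma \ref{closedcomplex} we have $\varphi(\tilde b_i) = s_{\tilde i} = \bar s_i$, so componentwise multiplication gives
\[ \varphi(b_i\tilde b_i) = s_i\cdot\bar s_i = (|s_{li}|^2)_l = |\mu_i|^2\,\mathbf{1} = \mu_i^2\,\varphi(e), \]
using $|s_{li}| = |\mu_i|$ for every $l$ together with $\mu_i\in\Zz$. Since $\varphi$ is injective this yields the clean identity $b_i\tilde b_i = \mu_i^2\,e$ in $R_\Cz$, valid for each $i$.

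Then I would apply the symmetrizing trace $\tau$. On one hand $\tau(\tilde b_i b_i) = 1$ is one of the defining axioms, and by commutativity $\tau(b_i\tilde b_i) = 1$. On the other hand, applying $\tau$ to the identity just obtained gives $\tau(b_i\tilde b_i) = \mu_i^2\,\tau(e)$. Equating the two expressions yields $\mu_i^2\,\tau(e) = 1$ for every $i$; in particular $\tau(e)\ne 0$ and $\mu_i^2 = 1/\tau(e)$ takes the same value for all $i$, whence $|\mu_i| = |\mu_j|$ for all $i,j$.

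The computation is short, so the difficulty is not technical but conceptual: the whole argument hinges on the step $s_i\cdot\bar s_i = |\mu_i|^2\,\mathbf{1}$, which in turn rests on every entry of $v_i$ being a \emph{nonzero} root of unity. Were zeros allowed among the $v_{li}$—as in the more general setting of the first theorem—the vector $s_i\cdot\bar s_i$ would no longer be a constant multiple of $\varphi(e)$, the identity $b_i\tilde b_i = \mu_i^2 e$ would break down, and the reduction to a single scalar $\tau(e)$ would fail. The secondary points to check are merely that $\varphi$ is unital, so $\varphi(e) = \mathbf{1}$, and that $\varphi(\tilde b_i) = \bar s_i$ as supplied by Lemma \ref{closedcomplex}.
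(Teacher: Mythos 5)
Your proposal is correct and follows essentially the same route as the paper: both use Lemma \ref{closedcomplex} to identify $\tilde b_i$ with the conjugate column $\bar s_i = \mu_i \bar v_i$, exploit that the entries of $v_i$ are nonzero roots of unity so that $v_i\bar v_i$ is the unit (all-ones vector), and then equate $1=\tau(\tilde b_i b_i)=\mu_i^2\tau(e)$ to get $\mu_i^2=\tau(e)^{-1}$ independently of $i$. Your write-up merely makes the passage through $\varphi$ and the unitality of $\varphi$ more explicit than the paper does.
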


\begin{proof}
Since $v_i^{-1}=\bar v_i = \frac{1}{\mu_i}s_{\tilde i}$ by lemma
\ref{closedcomplex}, for $\eta:=\tau(e)=\tau(v_i v_i^{-1})$ we get
\[ 1=\tau(b_i\tilde b_i)=\mu_i^2\tau(v_i v_i^{-1})=\mu_i^2\eta \]
i.e. $\mu_i = \pm\sqrt{\eta^{-1}}$ for all $i$.
\end{proof}

\subsection{Closed subsets}\label{closedsubsets}

\begin{defi}
Let $(R,B)$ be a $\Zz$-based rng. A subset $B'\subset B$ is
called \defst{closed}, if the $\Zz$-module spanned by $B'$ is closed
under the multiplication of $R$.
\end{defi}

\begin{exm}
The closed subsets of a character ring of a finite group $G$ are in
bijection with the normal subgroups of $G$. The subalgebra
corresponding to $N\unlhd G$ is isomorphic to the character
ring of the factor group $G/N$. This is a corollary to a theorem
of Burnside and Brauer (see \cite{mC}, 3.1.2).
\end{exm}

We prove that the subalgebra given by a closed subset is a
$\Zz$-based rng. First, we need:

\begin{lem}\label{lempotne0}
If $(R,B)$ is a $\Zz$-based rng and $b\in B$, then there
exists an $m \in \Nz$ such that $\tau(b^m)\ne 0$.
\end{lem}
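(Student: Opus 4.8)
The plan is to carry everything over to the semisimple side $R_\Cz\cong\Cz^n$ via the isomorphism $\varphi$, where the question becomes a statement about the entries of the $s$-matrix. Writing $b=b_i$, its image $\varphi(b)$ is the $i$-th column $(s_{ji})_j$ of $s$, and since $\varphi$ is an algebra homomorphism and multiplication in $\Cz^n$ is componentwise, the power $b^m$ corresponds to the vector $(s_{ji}^m)_j$. Thus I only need to understand $\tau$ as a linear functional expressed in the basis $\{v_j\}$, and then evaluate it on these coordinatewise powers.

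The next step is to read off $\tau$ in these coordinates from the computation behind lemma \ref{closedcomplex}. There one has $e_i=\sum_j\bar s_{ji}d_j$ with $d_j=D_{jj}^{-1}$ and $D=s\bar s^T$ diagonal, so $\tau=\sum_i\bar e_i\tau_i$ evaluates on a vector $(x_j)_j\in\Cz^n$ (in the $v$-basis) by $\tau((x_j)_j)=\sum_j d_j x_j$; indeed both sides are linear in the argument and agree on each $b_i$. The crucial feature is that these weights are strictly positive: $D_{jj}=\sum_k|s_{jk}|^2>0$ since $s$ is invertible and so has no zero row, whence $d_j>0$ for all $j$. Applying this to $b^m$ gives the clean formula
\[ \tau(b^m)=\sum_j d_j\, s_{ji}^m. \]

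Finally I would show this cannot vanish for every $m$. Let $y_1,\dots,y_r$ be the distinct \emph{nonzero} values taken by the entries $s_{ji}$, and set $c_k=\sum_{j:\,s_{ji}=y_k}d_j$; then for $m\ge 1$ the zero entries drop out and $\tau(b^m)=\sum_{k=1}^r c_k y_k^m$, with each $c_k>0$ by positivity of the $d_j$. Since the column $(s_{ji})_j$ is nonzero ($s$ invertible, $b_i\ne 0$), at least one nonzero value survives, so $r\ge 1$. If $\tau(b^m)=0$ held for $m=1,\dots,r$, then $(c_1,\dots,c_r)$ would lie in the kernel of the matrix $(y_k^m)_{1\le m,k\le r}=(y_k^{m-1})_{m,k}\cdot\diag(y_1,\dots,y_r)$, a product of a Vandermonde matrix with distinct nodes and an invertible diagonal matrix, hence invertible; this forces all $c_k=0$, contradicting $c_k>0$. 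Therefore $\tau(b^m)\ne 0$ for some $m\in\{1,\dots,r\}$.

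The only genuinely delicate point is the strict positivity of the weights $d_j$: it is precisely what keeps the grouped coefficients $c_k$ from collapsing through cancellation, and thereby what makes the Vandermonde argument applicable. Everything else is routine bookkeeping with lemma \ref{closedcomplex} together with the standard linear independence of the geometric sequences $m\mapsto y_k^m$ for distinct nonzero $y_k$.
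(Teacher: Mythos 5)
Your proof is correct, and it shares its first half with the paper's own argument: the paper likewise reduces everything to the identity $\tau(b^m)=\sum_j d_j\,s_{ji}^m$ with strictly positive weights coming from the rows of $s$ (there the weights appear as $w_j^2$, where $w_j$ is the inverse norm of the $j$-th row, and the sum is packaged as the inner product $(wv^m\mid w)$), and this rests on exactly the ingredients you cite, namely Proposition \ref{orthrel} and the computation in Lemma \ref{closedcomplex}, both of which precede the lemma, so there is no circularity. Where you genuinely diverge is the finishing step. The paper assumes $\tau(b^m)=0$ for all $m$, chooses $k$ minimal with $\pi(e)=vv',v,v^2,\ldots,v^k$ linearly dependent, pairs the dependence relation with $w$ so that all power terms vanish by assumption and only $c_0\langle\pi(w),\pi(w)\rangle$ survives (positivity of the weights enters only as nonvanishing of this norm), concludes $c_0=0$, and then multiplies by $v'$ to obtain a shorter dependence, contradicting minimality. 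You instead group equal entries of the column, writing $\tau(b^m)=\sum_k c_k y_k^m$ with distinct nonzero nodes $y_k$ and coefficients $c_k>0$ --- positivity is used here to rule out cancellation within each group --- and then invoke invertibility of a Vandermonde matrix. Your finish is arguably cleaner: it replaces the paper's projection-and-division trick by the standard linear independence of the geometric sequences $m\mapsto y_k^m$, and it gives the explicit bound that some $m\le r$ works, where $r$ is the number of distinct nonzero entries of the column of $s$ corresponding to $b$ (in particular $m\le n$). What the paper's version buys in exchange is that it never needs to identify which entries of the column coincide, working directly with linear dependences among the powers of $v$; both arguments are elementary and complete.
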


\begin{proof}
Let $v$ be the column of the $s$-matrix $s$ of $R$ corresponding to
$b$. If $v^m=\sum_i \mu_i x_i$, where $x_i$ are the columns
of $s$, then $\tau(b^m)=\sum_i \bar e_i \mu_i$.
Let $w$ be the vector with the inverses of the norms of the rows of $s$
as entries, so it has positive non zero real numbers as entries.

The $X_i:=wx_i$ (componentwise multiplication) are
orthonormal, because the rows of $s$ are orthogonal.
The sum $e=\sum_i e_i x_i$ is the linear combination that
gives the vector with all entries $1$ (because it is the unit
of the algebra).

Now $w v^m = \sum_i \mu_i w x_i = \sum_i \mu_i X_i$,
and because the $X_i$ are orthonormal, we get
\[ \tau(b^m)= \sum_i e_i \mu_i
= (\sum_i \mu_i X_i\mid\sum_j e_j X_j) = (wv^m\mid w),\]
($w=\bar w$, $e=\bar e$, $(\cdot\mid\cdot)$ is the standard
inner product on $\Cz^{|B|}$).

If the basis has $n$ elements, then
the powers $v,v^2,\ldots,v^{n+1}$ of $v$ are linearly dependent.
Let $v'$ be the vector
\[ v'_i:=\begin{cases} v_i^{-1} &\mbox{if } v_i\ne 0 \\
 0 &\mbox{if } v_i=0 \end{cases} \]
and $\pi$ be the projection onto the subspace of $\Cz^{|B|}$
given by the nonzero entries of $v$, so
$\pi : u \mapsto v v' u$.
Choose $k$ minimal such that
$\pi(e)=vv',v^1,v^2\ldots,v^k$ are linearly dependent. Then there exist
$c_0,\ldots,c_{k-1}\in\Cz$ with
\[ v^{k} + c_{k-1}v^{k-1}+\cdots+c_1 v + c_0 \pi(e)=0. \]
Now suppose $\tau(b^m)=0$ for all $m>0$.
Then $( w v^m\mid w)=0$ for all $m>0$ and
in particular
\begin{eqnarray*}
0 & = & \langle w(v^{k} + c_{k-1}v^{k-1}+\cdots+c_1 v+c_0 \pi(e)),w\rangle \\
& = & \langle w c_0 \pi(e),w \rangle \\
& = & c_0 \langle \pi(w),\pi(w) \rangle,
\end{eqnarray*}
i.e. $c_0=0$.
Then
\[ v'(v^{k} + c_{k-1}v^{k-1}+\cdots+c_1 v)=
v^{k-1} + c_{k-1}v^{k-2}+\cdots+c_1 \pi(e)=0 \]
in contradiction to the minimal choice of $k$.
\end{proof}

The last proof also shows that the entries of
the $s$-matrix are algebraic over $\Qz$,
because the powers of a column are linearly dependent.
We have more:

\begin{prop}\label{algint}
The entries of the $s$-matrix of a $\Zz$-based rng are
algebraic integers over $\Zz$.
\end{prop}
\begin{proof}
If $c_0,\ldots,c_{n-1}$ are the entries of a row
of $s$, then $\Zz[c_0,\ldots,c_{n-1}]$ is finitely
generated as a $\Zz$-module, since $R$ is isomorphic
to the lattice spanned by the columns of $s$ with
componentwise multiplication, and
it is a free $\Zz$-module of rank $n$. By
Theorem \cite{jN}, I.2.2, this is equivalent to
the fact that $c_0,\ldots,c_{n-1}$ are integral
over $\Zz$.
\end{proof}

Presumably there exists an $m$ with $\tau(b^m)\in \Rz_{>0}$.
But it does not suffice to choose an $m$ such that
$\tau(b^m)\in\Rz_{<0}$ and to take the square of $b^m$.

\begin{exm}
If $R:=\Zz[\Zz/3\Zz]$ with the group elements
$b_0=1,b_1,b_2$ as a basis, then $\tilde b_1 = b_2$ and $b_1 b_2 = 1$.
We have $\tau(-b_0-b_1+b_2)=-1$ and $\tau((-b_0-b_1+b_2)^2)=
\tau(-b_0+3 b_1-b_2)=-1$. However, $\tau((-b_0-b_1+b_2)^3)=5>0$.
\end{exm}

\begin{prop}\label{RRtzB}
Let $(R,B)$ be a $\Zz$-based rng and $R'$ the subalgebra
generated by a closed subset $B'\subseteq B$.
Then $\widetilde{R'}\subseteq R'$, i.e.,
closed subsets are $\sim$-invariant.
\end{prop}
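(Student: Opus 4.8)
The plan is to reduce the statement to a single claim about basis elements: for every $b=b_k\in B'$, its dual $\tilde b=b_{\tilde k}$ again lies in $B'$. Since $\sim$ is $\Zz$-linear and permutes $B$, we have $\widetilde{R'}=\Zz\text{-span}\{b_{\tilde k}:b_k\in B'\}$, so $\widetilde{R'}\subseteq R'$ is exactly the assertion that $b_{\tilde k}\in B'$ for all $b_k\in B'$. The argument will rest on two facts already at hand: closedness forces every power $b^m$ ($m\ge 1$) to lie in $R'$ and hence to have its basis expansion supported on $B'$; and the form $\langle\,,\,\rangle$ detects coefficients, since $r=\sum_{b\in B}\langle b,r\rangle b$.

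First I would compute the coefficient of $b_{\tilde k}$ in the power $b^m$. Using $\tilde b_{\tilde k}=b_k$ and the definition $\langle r,r'\rangle=\tau(\tilde r r')$, this coefficient is
\[ \langle b_{\tilde k},b^m\rangle=\tau(\tilde b_{\tilde k}\,b^m)=\tau(b_k\,b^m)=\tau(b^{m+1}). \]
So the presence of $b_{\tilde k}$ in $b^m$ is governed entirely by the scalar $\tau(b^{m+1})$. If I can exhibit one $m\ge 1$ with $\tau(b^{m+1})\neq 0$, then $b_{\tilde k}$ occurs with nonzero coefficient in the element $b^m\in R'$, and since $R'$ is the $\Zz$-span of $B'$ this forces $b_{\tilde k}\in B'$, as desired.

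The main obstacle is therefore to guarantee a power of exponent \emph{at least} $2$ with nonvanishing trace: Lemma \ref{lempotne0} only asserts that \emph{some} power has $\tau\neq 0$, and a priori this could be the first power $b$ itself, for which the identity above is vacuous. To close this gap I would return to the explicit formula established inside the proof of Lemma \ref{lempotne0}, namely $\tau(b^m)=(wv^m\mid w)=\sum_l w_l^2 v_l^m$, where $v$ is the column of the $s$-matrix attached to $b$ and the weights $w_l$ are strictly positive reals. Because $s$ is invertible its columns are nonzero, so $v\neq 0$; grouping equal entries, $m\mapsto\tau(b^m)=\sum_j W_j\lambda_j^m$ is a nonzero generalized power sum (at least one $\lambda_j\neq 0$ carries positive weight $W_j$), which by a Vandermonde argument cannot vanish for all $m\ge 2$. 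Hence there is $M\ge 2$ with $\tau(b^M)\neq 0$.

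Finally I would assemble the pieces: taking $m=M-1\ge 1$ gives $\langle b_{\tilde k},b^m\rangle=\tau(b^M)\neq 0$, so $b_{\tilde k}$ lies in the support of $b^m\in R'$ and therefore $b_{\tilde k}\in B'$. As $b\in B'$ was arbitrary, $\{\tilde b:b\in B'\}\subseteq B'$ and thus $\widetilde{R'}\subseteq R'$. Note that when $b$ is self-dual the conclusion $b_{\tilde k}=b_k\in B'$ is immediate; the genuine content lies in the non-self-dual case, which is precisely where the refinement to an exponent $M\ge 2$ is indispensable.
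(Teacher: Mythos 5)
Your proof is correct, and its skeleton coincides with the paper's: pick a power of $b$ with nonvanishing trace, expand the preceding power over $B'$ (legitimate by closedness), and observe that $\tau$ picks out exactly the coefficient of $\tilde b$, which is therefore nonzero; the paper does literally this, writing $b^{m-1}=\sum_{b'\in B'}c_{b'}b'$ for $m$ with $\tau(b^m)\neq 0$ and computing $c_{\tilde b}=\tau(b^m)$. What you add --- and it is a genuine addition --- is the handling of the exponent-one case: in a rng there is no $b^0$, so the paper's expansion only makes sense when $m\geq 2$, while Lemma \ref{lempotne0} as stated does not exclude that the only exponent with nonvanishing trace is $m=1$. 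Your Vandermonde argument on the power sums $\tau(b^m)=\sum_l w_l^2 v_l^m$ (positive weights, and at least one nonzero entry $v_l$ because $s$ is invertible) shows that $\tau(b^M)\neq 0$ for some $M\geq 2$ --- indeed nonvanishing must occur in any window of consecutive exponents whose length is the number of distinct nonzero entries of $v$ --- and this closes the gap; as a by-product it yields a shorter proof of Lemma \ref{lempotne0} itself than the paper's projection-and-minimality argument. In short: same route as the paper, plus a patch that the paper's own proof tacitly needs.
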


\begin{proof}
It suffices to check that $\tilde b \in B'$ for all $b\in B'$.
For this, take $b\in B'$ and $m\in\Nz$ such that $\tau(b^m)\ne 0$
(lemma \ref{lempotne0}).
If $b^{m-1}=\sum_{b'\in B'} c_{b'}b'$ for some $c_{b'}\in \Zz$, then
\[ 0 \ne \tau(b^m)=\tau(\sum_{b'}bc_{b'}b')=\sum_{b'}c_{b'}\tau(bb')=c_{\tilde b},\]
i.e. $\tilde b \in B'$ and hence $\widetilde{R'} \subseteq R'$.
\end{proof}

The $\Zz$-module spanned by the complement of a closed subset $B'$
with subalgebra $R'$ is an $R'$-module:

\begin{prop}\label{RRmodul}
If $(R,B)$ is a $\Zz$-based rng and $R'$ a subalgebra
spanned by a closed subset $B'\subseteq B$, then the $\Zz$-module
spanned by $B\backslash B'$ is an $R'$-module with respect to the
multiplication of $R$.
\end{prop}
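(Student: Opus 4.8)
The plan is to reduce the module property to a statement about single basis elements. Writing $M$ for the $\Zz$-span of $B\setminus B'$, what has to be shown is $R'M\subseteq M$, and since $R'$ is spanned by $B'$ and $M$ by $B\setminus B'$, it suffices to prove that $b'c\in M$ for every $b'\in B'$ and every $c\in B\setminus B'$. Because $B$ is an orthonormal basis for $\langle\,,\,\rangle$, the element $b'c$ lies in $M$ precisely when all its coefficients on $B'$ vanish, i.e. when $\langle b_m,b'c\rangle=0$ for every $b_m\in B'$. So the whole statement comes down to checking this single orthogonality.

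First I would record that $\sim$ is multiplicative: from $N_{\tilde i\tilde j}^m=N_{ij}^{\tilde m}$ together with the commutativity of $R$ one gets $\widetilde{xy}=\tilde x\,\tilde y$, and in particular $\widetilde{\tilde b'\,b_m}=b'\,\tilde b_m$ since $\sim$ is an involution. Using this and commutativity I rewrite the relevant inner product as
\[ \langle b_m,b'c\rangle=\tau(\tilde b_m\,b'\,c)=\tau\big(\widetilde{\tilde b'\,b_m}\;c\big)=\langle \tilde b'\,b_m,\,c\rangle. \]
Now Proposition \ref{RRtzB} guarantees that $B'$ is $\sim$-invariant, so $\tilde b'\in B'$; and since $B'$ is closed, the product $\tilde b'\,b_m$ of two elements of $B'$ is a $\Zz$-linear combination $\sum_{b''\in B'}d_{b''}\,b''$ of basis elements all lying in $B'$. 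Hence
\[ \langle \tilde b'\,b_m,\,c\rangle=\sum_{b''\in B'}d_{b''}\,\langle b'',c\rangle. \]
Each $b''\in B'$ is a basis element different from $c\in B\setminus B'$, so orthonormality of $B$ forces $\langle b'',c\rangle=0$, and therefore $\langle b_m,b'c\rangle=0$, as required.

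I expect the only real input to be Proposition \ref{RRtzB}: it is what places $\tilde b'$ back in $B'$ and hence lets the product $\tilde b'\,b_m$ land inside $R'$, where the closedness of $B'$ can be applied. The remaining ingredients — the multiplicativity of $\sim$, the identity $\langle b_m,b'c\rangle=\langle\tilde b'\,b_m,c\rangle$, and the orthonormality of $B$ — are all routine manipulations of $\tau$, so no genuine obstacle remains once the $\sim$-invariance of closed subsets is in hand.
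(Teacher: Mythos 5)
Your proof is correct and follows essentially the same route as the paper: both compute the $B'$-coefficient of the product $b'c$ as a trace $\tau(\tilde b_m b' c)$, reassociate so that the two factors lying in $B'$ get multiplied together, and then invoke Proposition \ref{RRtzB} together with closedness and the orthonormality of $B$ to force that coefficient to vanish. The only cosmetic difference is that you pair $\tilde b' b_m$ against $c$ directly, which spares you the paper's extra observation that $\tilde m$ must lie in $B\setminus B'$.
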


\begin{proof}
Let $B'':=B\backslash B'$ and $b\in B', m\in B''$.
If $bm = c a + r$ with $a \in B'$, $c \in \Zz$ and
$r \in \langle B\backslash \{a\}\rangle$ then
\[ \tau(\tilde abm) = \tau(\tilde a (ca+r)) = \tau(c \tilde a a)+\tau(\tilde a
r) = c, \]
($\tau(\tilde a r)=0$).
So $\tilde ab=c\tilde m+r'$ for some suitable $r'$ with
$\tau(\tilde m r') = 0$. But since $\tilde a, b$ are in
$B'$ and $\tilde m$ is in $B''$ (proposition \ref{RRtzB}),
$c$ has to be zero. This holds for all $a\in B'$ and
$bm$ therefore lies in the span of $B''$.
\end{proof}

Now we can prove:

\begin{thm}\label{clsbisZ}
The subalgebra of a $\Zz$-based rng spanned by a closed subset
is a $\Zz$-based rng.
\end{thm}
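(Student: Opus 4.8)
The plan is to run through the definition of a $\Zz$-based rng clause by clause for the pair $(R',B')$, where $B'\subseteq B$ is the given closed subset and $R'=\langle B'\rangle_\Zz$. The easy clauses come for free from closedness: since $b_ib_j$ with $b_i,b_j\in B'$ expands over $B'$ alone, $R'$ is a commutative $\Zz$-subalgebra, free as a $\Zz$-module on $B'$, with integer structure constants $N'^m_{ij}=N^m_{ij}$ ($b_m\in B'$) inherited from $R$. By Proposition \ref{RRtzB} we have $\widetilde{B'}=B'$, so $\sim$ restricts to a $\Zz$-module involution of $R'$; restricting the identity $N^m_{\tilde i\tilde j}=N^{\tilde m}_{ij}$ to indices in $B'$ (all of which stay in $B'$ under $\sim$) yields the same identity for the $N'$. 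What remains is to produce the element $e'$, to check $\tilde e'=e'$, and to check the orthonormality relation $\tau'(\tilde b_ib_j)=\delta_{ij}$ for the induced trace $\tau'$.

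For the unit I would use that $R'_\Cz$ is a subalgebra of $R_\Cz\cong\Cz^n$, the isomorphism coming from the semisimplicity of $R_\Cz$. A nilpotent element of $R'_\Cz$ is nilpotent in $\Cz^n$, hence zero, so $R'_\Cz$ is reduced; grouping the coordinate characters $\Cz^n\to\Cz$ that agree on $R'_\Cz$ identifies it with some $\Cz^m$ (the distinct characters being linearly independent forces surjectivity), and in particular $R'_\Cz$ has an identity $e'$. Once $e'$ is available, $\tilde e'=e'$ follows formally: $\sim$ is a multiplicative conjugate-linear involution of $R'_\Cz$ (because $\widetilde{b_ib_j}=\tilde b_i\tilde b_j$, as shown in the proof of Lemma \ref{Rclin}, and $\widetilde{B'}=B'$), so applying $\sim$ to $e'a=a$ gives $\tilde e'\tilde a=\tilde a$ for all $a$; since $\sim$ is bijective on $R'_\Cz$, $\tilde e'$ is again a two-sided identity, and identities are unique.

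The heart of the matter is the orthonormality relation, and the key observation is that the trace $\tau'$ of $R'$ (formed, as in the general setup, from the $B'$-coordinates of $e'$) is just the restriction $\tau|_{R'}$. To see this, recall that $B$ is orthonormal for $\langle r,r'\rangle=\tau(\tilde rr')$, so for $b_m\in B'$ the $b_m$-coordinate of $e$ is $\langle b_m,e\rangle=\tau(\tilde b_me)=\tau(\tilde b_m)$, using that $e$ is the identity of $R_\Cz$. But $\tilde b_m\in R'_\Cz$, so $e'$ acts on it as the identity too, whence $\tau(\tilde b_me')=\tau(\tilde b_m)$; thus $e$ and $e'$ have the \emph{same} $b_m$-coordinate for every $b_m\in B'$, i.e. $e'_m=e_m$. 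Therefore $\tau'(b_m)=\overline{e'_m}=\overline{e_m}=\tau(b_m)$ on the basis $B'$, so $\tau'=\tau|_{R'}$. For $b_i,b_j\in B'$, noting that $\tilde b_ib_j$ is computed identically in $R'$ and in $R$, we conclude $\tau'(\tilde b_ib_j)=\tau(\tilde b_ib_j)=\delta_{ij}$, exactly the required relation; the positive-definiteness of the associated form on $R'_\Cz$ is then inherited as the restriction of the form on $R_\Cz$.

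All axioms being met, $(R',B')$ is a $\Zz$-based rng. I expect the only genuinely delicate step to be the existence of $e'$: one must ensure that passing to a \emph{non-unital} subalgebra of a semisimple algebra still produces an identity, which is why reducing to subalgebras of $\Cz^n$, where the coordinate characters make the structure transparent, is the cleanest route. Everything else is bookkeeping, with the identification $\tau'=\tau|_{R'}$ — itself a one-line consequence of $e'$ fixing $\tilde b_m\in R'_\Cz$ — carrying the real weight of the proof.
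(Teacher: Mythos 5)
Your proof is correct, but it reaches the one genuinely nontrivial point --- the existence of the identity $e'$ of $R'_\Cz$ --- by a different route than the paper. The paper never invokes structure theory here: it simply \emph{defines} $e':=\sum_{b\in B'}e_b b$, the truncation of the global identity $e$, and verifies $e'r=r$ for $r\in R'$ by a direct computation whose crucial input is Proposition \ref{RRmodul} (the span of $B\backslash B'$ is an $R'$-module, so $N_{b'b}^{b''}=0$ whenever $b',b''\in B'$ but $b\notin B'$; this is what kills the contribution of the terms of $e$ outside $B'$). You instead get existence abstractly: $R'_\Cz$ is a multiplicatively closed subspace of $R_\Cz\cong\Cz^n$, hence reduced, and grouping the coordinate characters identifies it with some $\Cz^m$, which is unital --- Proposition \ref{RRmodul} is never used. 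You then recover the paper's formula $e'_m=e_m$ for $b_m\in B'$ a posteriori, via $e'_m=\tau(\tilde b_m e')=\tau(\tilde b_m)=\tau(\tilde b_m e)=e_m$, and this identification is exactly what makes $\tau'=\tau|_{R'}$, so that the orthonormality axiom restricts correctly; the paper leaves this point (and the check $\tilde e'=e'$) implicit in the phrase ``we may restrict $\tau$ and $\sim$ to $R'$'', whereas your write-up makes it explicit --- a genuine gain in completeness, since the trace of $(R',B')$ is by definition built from the coordinates of $e'$, not of $e$. The trade-off: the paper's argument is elementary and self-contained, using only Propositions \ref{RRtzB} and \ref{RRmodul}, while yours is more conceptual and proves a reusable general fact (every multiplicatively closed subspace of $\Cz^n$ is unital). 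One small point of care in your character argument: the coordinates of $\Cz^n$ whose restriction to $R'_\Cz$ is the \emph{zero} functional must be discarded before grouping, since the zero functional cannot belong to the linearly independent family of distinct characters; with that reading, injectivity (from reducedness) and surjectivity (from Dedekind--Artin independence) of $R'_\Cz\to\Cz^m$ both go through.
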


\begin{proof}
Let $(R,B)$ be a $\Zz$-based rng and $B'\subseteq B$ be a closed subset
generating a subalgebra $R'$.
By proposition \ref{RRtzB}, we may restrict $\tau$ and $\sim$ to $R'$. It
remains to check that we have an identity element in $R'_{\Cz}=R'\otimes_{\Zz}\Cz$.
In $R_\Cz$ we have the element $e=\sum_{b\in B} e_b b$, and for
$r=\sum_{b'\in B'} \mu_{b'} b' \in R'$ we get
\[ r=er=\sum_{b\in B} e_b \sum_{b'\in B'} \mu_{b'} b'b=
\sum_{b,b''\in B}\sum_{b'\in B'} e_b \mu_{b'} N_{b'b}^{b''} b'' \]
where $\sum_{b\in B}\sum_{b'\in B'} e_b\mu_{b'}N_{b'b}^{b''}=0$ for $b''\notin
B'$ because $r\in R'$.
By proposition \ref{RRmodul}, $N_{b'b}^{b''}=0$ if $b',b''\in B'$ and $b\notin
B'$. So
\[ r=\sum_{b,b'',b'\in B'} e_b \mu_{b'} N_{b'b}^{b''} b''=
\sum_{b\in B'} e_b \sum_{b'\in B'} \mu_{b'} b'b=e'r \]
with $e':=\sum_{b\in B'} e_b b$.
\end{proof}

Closed subsets may be read of the $s$-matrix (in the same
way as character rings of factor groups of a finite group
may be found in its character table):

\begin{prop}
Let $(R,B)$ be a $\Zz$-based rng with $s$-matrix $s$ and $B'\subseteq B$ be a
closed subset. Then if $u_i$, $i\in \{0,\ldots,|B|-1\}$ are the rows of the
submatrix $u$ of $s$ consisting of the columns $s_i$, $i\in B'$, then
the matrix $u'$ with rows $\{u_i \mid 0\le i< |B|,\:\: u_i\ne 0\}$ (in any ordering)
is an $s$-matrix of the subring $R'$ spanned by $B'$.
\end{prop}

\begin{proof}
The rows of $s$ are the irreducible representations of $R$ evaluated
on $B$. Restricting them to $R'$ yields representations
$u_i$, $i\in \{0,\ldots,|B|-1\}$ of $R'$ (possibly $u_i=u_j$ for $i\ne j$,
$u_i=0$ may also occur). Now if $m:=|\{u_i \mid 0\le i< |B|, \:\: u_i\ne 0\}|$
was smaller than $|B'|$, then the rank
of $u$ would be less than $|B'|$. But this is impossible because
$s$ is invertible. On the other hand, $R'$ is a $\Zz$-based rng by
Theorem \ref{clsbisZ}, so it has $|B'|$ irreducible representations
and not more, i.e. $m=|B'|$ and $u'$ is an $s$-matrix.
\end{proof}

A converse to the previous result also holds:

\begin{prop}
If $(R,B)$ is a $\Zz$-based rng with $s$-matrix $s$ and $B'$ is a
subset of $B$ such that the set of rows $\ne 0$ of the submatrix $u$ of $s$
consisting of the columns of $s$ indexed by $B'$ has exactly $|B'|$
elements, then $B'$ is a closed subset.
\end{prop}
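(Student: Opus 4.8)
The plan is to work inside the Wedderburn isomorphism $\varphi : R_\Cz \to \Cz^n$ from subsection \ref{smat}, under which $\varphi(b_i)=\sum_k s_{ki} v_k$ and $\{v_k\}_k$ are the canonical idempotents satisfying $v_k v_l=\delta_{kl}v_k$. First I would group the row indices $\{0,\ldots,n-1\}$ by the equivalence relation $k\sim l\iff u_k=u_l$ (equality of the corresponding rows of $u$), writing $C_0$ for the class of the zero rows and $C_1,\ldots,C_m$ for the classes of the distinct nonzero rows. For each nonzero class I form $f_j:=\sum_{k\in C_j} v_k$. Since distinct classes use disjoint sets of $v_k$, the $f_j$ are orthogonal idempotents, so $\mathrm{span}_\Cz\{f_1,\ldots,f_m\}$ is a subalgebra of $\Cz^n$ isomorphic to $\Cz^m$. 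By hypothesis $m=|B'|$.

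The key observation is that for $i\in B'$ the scalar $s_{ki}$ depends only on the class of $k$ and vanishes on $C_0$, so $\varphi(b_i)=\sum_{j=1}^m w_j(i)\,f_j$, where $w_j$ denotes the $j$-th distinct nonzero row and $w_j(i)$ its entry in column $i$. In the coordinates given by $f_1,\ldots,f_m$, the family $\{\varphi(b_i)\}_{i\in B'}$ is exactly the set of columns of the $m\times m$ matrix $u'$ of the preceding proposition. I would then show $u'$ is invertible: the columns of $u$ are linearly independent because $s$ is invertible, so $u$ has rank $|B'|$; its row space is spanned by its distinct nonzero rows, of which there are exactly $|B'|$ by assumption, which forces these rows to be linearly independent, and hence $u'$ to be nonsingular.

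Invertibility of $u'$ shows that $\{\varphi(b_i)\}_{i\in B'}$ is a basis of $\mathrm{span}_\Cz\{f_1,\ldots,f_m\}$, so $\varphi(\langle B'\rangle_\Cz)$ equals this subalgebra and is in particular closed under the componentwise multiplication of $\Cz^n$. Transporting back through $\varphi$, the subspace $\langle B'\rangle_\Cz$ is a $\Cz$-subalgebra of $R_\Cz$. Finally I would descend to $\Zz$: for $i,j\in B'$ the product $b_ib_j=\sum_m N_{ij}^m b_m$ lies in $\langle B'\rangle_\Cz$, and since $B$ is a basis of $R_\Cz$ this expansion is unique, whence $N_{ij}^m=0$ for all $m\notin B'$; as the $N_{ij}^m$ are integers, $b_ib_j\in\langle B'\rangle$, so $B'$ is closed.

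The main obstacle is pinning down the precise role of the hypothesis: it is exactly the equality $m=|B'|$ (equivalently, $\mathrm{rank}\,u$ equals the number of distinct nonzero rows) that forces $\varphi(\langle B'\rangle_\Cz)$ to fill the entire span of the idempotents $f_j$, rather than being merely an $m$-dimensional subspace that need not be multiplicatively closed. Were there strictly more than $|B'|$ distinct nonzero rows, the image would sit as a proper and generally non-closed subspace of a larger idempotent algebra, and the conclusion would fail. The remaining ingredients -- orthogonality of the $f_j$ and the descent from $\Cz$- to $\Zz$-coefficients -- are routine.
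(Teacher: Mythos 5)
Your proof is correct and is essentially the paper's own argument in different clothing: your orthogonal idempotents $f_j=\sum_{k\in C_j}v_k$ are exactly the paper's indicator vectors $w_l$ of the classes $T_l$ of equal nonzero rows, and both proofs hinge on the same rank count (rank of $u$ equals $|B'|$, which by hypothesis equals the number of distinct nonzero rows) to conclude that the span of the columns indexed by $B'$ fills the multiplicatively closed span of these idempotents, followed by the same uniqueness-of-expansion descent to the integral structure constants.
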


\begin{proof}
Let $B'\subseteq B$ be a subset as in the assumptions and
$u_i$ be the rows of the submatrix $u$ of $s$.
We partition the set of indices of rows of $u$ not equal to $0$
in a disjoint union of sets $T_1,\ldots,T_r$, $r\in\Nz$ such that
$u_i=u_j$ for $i,j \in T_l$ and
$u_i\ne u_j$ for $i\in T_l$, $j\in T_m$ with $l\ne m$
for all $i,j,l,m$. Let $w_l$ be the vector with $1$'s
at the entries $i\in T_l$ and $0$ else. Then
\[ V:=\langle w_1,\ldots,w_r \rangle \]
is a vector space of dimension $r$ and $r=|B'|$ because
there are $|B'|$ different rows in $u$.
The columns of $u$ are obviously elements of $V$. But the
product of two columns of $u$ also lies in $V$, because
all entries to a $T_l$ remain equal. Further, the rank
of $u$ is $r$ since $s$ is invertible, so $V$ is equal
to the space spanned by the columns of $u$. So the linear
decomposition $\sum_l N_{ij}^l s_l$ of the product of two columns
$s_i,s_j$ of $u$ with respect to the columns of $s$ has only
coefficients $N_{ij}^l$ not equal to $0$ for $l\in B'$ which
means that $B'$ is a closed subset.
\end{proof}

The last two propositions suggest an algorithm for a heuristic search
for closed subsets. Start with two rows of an $s$-matrix $s$ and
consider the set $B'$ of indices for which these rows have the same
entry. Then count the different rows in the submatrix consisting
of the columns of $s$ indexed by $B'$. If this number is $|B'|$
then $B'$ is a closed subset.

We do not find all closed subsets by this method in general. But
we can improve the algorithm by adding the intersection of any
two such sets and iterate this procedure until there are no further
intersections left. But there may still exist some closed sets
that we do not find with this algorithm (remark that these
subsets are rare in our applications).

\begin{exm}
The exterior square $s$ of the $s$-matrix of the group ring $\Zz[(\Zz/2\Zz)^3]$
is an $s$-matrix of a $\Zz$-based rng of rank $28$. It has a closed
subset with $16$ elements which may not be found by our algorithm.
\end{exm}

\section{Ideals and factor rings}\label{idealsandfactors}

\subsection{Pointed algebras and factor rings}

We want to define ``factor rings'' for a generalization
of $\Zz$-based rngs.

\begin{defi}
Let $R$ be a finitely generated commutative $\Zz$-algebra which is a free
$\Zz$-module of finite rank with basis $B$. We call $(R,B)$ a
\defst{pointed $\Zz$-algebra}.

We have \defst{structure constants} as in the special case
of $\Zz$-based rngs.
\end{defi}

\begin{defi}\label{facring}
Let $(R,B)$ be a pointed $\Zz$-algebra and $I$ an ideal in $R$.
If $B'\subset B$ is a subset of the basis such that
the factor ring $R/I$ is a free $\Zz$-module with
basis $B'+I$, then we call $(R/I,B'+I)$ a \defst{factor
ring} of $(R,B)$.

Let $(R,B)$ be a $\Zz$-based rng and $I$ an ideal in $R$.
If $B'\subset B$ is a subset of the basis such that
the factor ring $(R/I,B'+I)$ with basis given by $B'$ is
a $\Zz$-based rng, then we call $(R/I,B'+I)$ a \defst{factor
ring} of $(R,B)$.
\end{defi}

Remark that for most ideals, the factor module $R/I$ is not
free anymore. And even if it has no torsion, in most cases there
will be no involution $\sim$ on $R/I$ (of course this depends
on the choice of subset in $B$).

Examples for such factor rings will be given in the following
sections: There is an interesting ideal in the representation ring
of the quantum double of a finite group which yields a pointed
$\Zz$-algebra with nonnegative structure constants. Also,
each Hadamard matrix corresponds to a ring which is a factor
ring of a ring with nonnegative structure constants.

\subsection{An ideal which yields a free factor ring}

When $R$ is the representation ring of some Hopf algebra, then
in many cases there is a one dimensional ``sign'' representation
which acts via tensor product as a permutation on the irreducible
representations. We can use this to become a new pointed $\Zz$-algebra
with nonnegative structure constants:

\begin{thm}\label{ord2facalg}
Let $(R,B)$ be a $\Zz$-based rng and $d\in B$ an element of
order $2$ which acts via multiplication as a permutation on $B$. Consider
the ideal
\[ I:=\langle 1-d \rangle \unlhd R. \]
Then there is a subset $B'\subseteq B$ such that $(R/I,B'+I)$ is
a pointed $\Zz$-algebra with nonnegative structure constants.
\end{thm}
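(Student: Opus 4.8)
The plan is to analyze the ideal $I=\langle 1-d\rangle$ very explicitly, using that $d$ permutes $B$, and to read off both the quotient basis and its structure constants directly. First I would pin down the structure of $d$. Since left multiplication by $d$ restricts to a bijection of the set $B$, we have $d\cdot b_i = b_{\sigma(i)}$ for a permutation $\sigma$ of the index set; the order-two hypothesis gives $d^2=e$, so $e$ is the basis element $d\cdot d=b_{\sigma(\text{index of }d)}$, i.e. $1:=e\in B$, and $\sigma$ is an involution swapping $1$ and $d$. I would also record that $\tilde d=d$: by Lemma \ref{closedcomplex} the column $s_{\tilde d}=\bar s_d$, and since $d^2=1$ that column has entries $\pm1$, hence is real, so $s_{\tilde d}=s_d$ and invertibility of $s$ forces $\tilde d=d$. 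This is the routine set-up step.

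The crux is showing that $R/I$ is free with an explicit basis. Since $R=\bigoplus_i\Zz b_i$ and $1-d$ acts $\Zz$-linearly, $I$ is the $\Zz$-span of the elements $(1-d)b_i=b_i-b_{\sigma(i)}$; on a fixed point of $\sigma$ this is $0$, and on a two-element orbit $\{b,b_\sigma\}$ it is $\pm(b-b_\sigma)$. Choosing a transversal $B'\subseteq B$ of the $\sigma$-orbits (one element per orbit), I would decompose $R=\langle B'\rangle\oplus I$ by noting that within each two-element orbit $\Zz b\oplus\Zz b_\sigma=\Zz b\oplus\Zz(b-b_\sigma)$. This exhibits $I$ as a direct summand and identifies $R/I$ as the free $\Zz$-module on $\{b+I\mid b\in B'\}$, so that $(R/I,B'+I)$ is a pointed $\Zz$-algebra. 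I expect this freeness to be the main technical obstacle, precisely because (as the paper stresses) factor modules are usually neither free nor torsion-free; the saving feature is that $1-d$ acts as $b\mapsto b-\sigma(b)$, whose image is manifestly a summand.

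Finally I would compute the structure constants. Working in $R_\Cz$ with the idempotent $f=\tfrac12(1+d)$ (so $f^2=f$ because $d^2=1$), the quotient map sends $b\mapsto fb=\tfrac12(b+b_\sigma)$, and from $b_ib_j=\sum_m N_{ij}^m b_m$ one gets $\overline{b_i}\,\overline{b_j}=\sum_m N_{ij}^m\,\overline{b_m}$, where $\overline{b_m}$ depends only on the $\sigma$-orbit of $m$. Collecting terms orbit by orbit shows that the structure constant of $R/I$ attached to $m'\in B'$ is $\sum_{m\in\mathrm{orb}(m')}N_{ij}^m$, namely $N_{ij}^{m'}$ at a fixed point and $N_{ij}^{m'}+N_{ij}^{\sigma(m')}$ on a two-element orbit. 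Each such sum is a sum of structure constants of $R$ lying in a single orbit, so nonnegativity is immediate once those constants are nonnegative, as in the motivating representation-ring examples; this is the one place where positivity of the constants of $R$ enters. As a useful cross-check I would also run the Verlinde computation \eqref{VF}, which yields $N_{ij}^{m'}+N_{ij}^{\sigma(m')}=2\sum_{l:\,s_{ld}=1}s_{li}s_{lj}\bar s_{lm'}\,w_l$ with weights $w_l>0$, confirming that the surviving characters are exactly the rows on which $d$ takes the value $+1$, i.e. the quotient retains the representations with $d\mapsto 1$.
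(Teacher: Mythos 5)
Your proposal is correct and follows the same overall strategy as the paper: partition $B$ into $\langle d\rangle$-orbits, take a transversal $B'$, show $R/I$ is free on $B'+I$, and identify the quotient structure constants as orbitwise sums $N_{ij}^{k}+N_{ij}^{\hat k}$. The one genuine methodological difference is the freeness step. The paper argues that $R/I$ is torsion-free: if $n\bar w=0$ then $nw=r(1-d)=\sum_i(a_i-a_{\hat i})b_i$, so $n$ divides each coefficient $a_i-a_{\hat i}$, whence $nw\in nI$ and $w\in I$; freeness of a finitely generated torsion-free $\Zz$-module then finishes the step. You instead exhibit $I$ as a direct summand, $R=\langle B'\rangle\oplus I$, via the unimodular change of basis $\{b,\,db\}\mapsto\{b,\,b-db\}$ on each two-element orbit. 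Your route is more direct and delivers at once both freeness and the fact that $B'+I$ is a basis (the paper's torsion argument, strictly speaking, gives only torsion-freeness and leaves the basis claim to the reader). Two further small points in your favor: you note that the hypotheses force $e=d^2\in B$, so $R$ is actually unital and the element $1-d$ makes sense in $R$ (the paper uses this tacitly), and you treat fixed points of the permutation correctly, where the quotient constant is $N_{ij}^{m'}$ rather than the doubled value that a literal reading of the paper's formula $N_{ij}^{k}+N_{ij}^{\hat k}$ would give. Your Verlinde cross-check is also correct: since $s_{ld}=\pm 1$, the sum $N_{ij}^{m}+N_{ij}^{\hat m}$ picks out exactly the rows with $s_{ld}=1$.

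On the final step: you write that nonnegativity of the quotient constants is immediate \emph{once the constants of $R$ are nonnegative}, and that this is the one place where positivity of $R$ enters. This is exactly right, and it exposes an implicit hypothesis rather than a gap in your argument. For a general $\Zz$-based rng the conclusion is false: take any $\Zz$-based ring $R_0$ with a negative structure constant, set $R=R_0\otimes_\Zz\Zz[\Zz/2\Zz]$ with $d=1\otimes g$; then $d$ has order $2$, permutes the basis without fixed points, and $R/\langle 1-d\rangle\cong R_0$ recovers the negative constants. The paper's own proof ends with the same formula followed by ``and therefore they are nonnegative,'' silently invoking the nonnegativity of the constants of $R$, which holds in the motivating representation-ring setting but is absent from the theorem's hypotheses. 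So your proof is as complete as the paper's, and more candid about where positivity is actually used.
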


\begin{proof} By assumption,
multiplication by $d$ is a bijection on the basis.
Since $d^2=1$, we get a partition of $B$ into equivalence classes
$\{ b, db \}$, equivalent elements are equal in $R/I$. Let $B'$ be a set of
representatives.
The $\Zz$-module $R':=R/I$ is free, and the set
$\{ a+I \mid a \in B'\}$ is a basis: assume that $R'$ has a
torsion element $\bar w$, $w\in R$, so $n\bar w=0$ for some $n\in \Nz$.
Then $nw\in I$, i.e. $nw$ may be written as $r(1-d)$ for an
$r=\sum_{i} a_i b_i \in R$, $a_i\in\Zz$, if the $b_i$ are the elements of $B$.
Let $\hat i$ be the index such that $b_{\hat i}=b_i\cdot d$.
Then
\[ nw=r(1-d)=\sum_i a_i b_i (1-d)= \sum_i(a_i-a_{\hat i})b_i, \]
and hence each $(a_i-a_{\hat i})$ is divisible by $n$. Let us write
$(a_i-a_{\hat i})=: n c_i$ with $c_i \in \Zz$.
Moreover,
\[ \sum_{b_i\in B}(a_i-a_{\hat i})b_i =
\sum_{b_i\in B'} (a_i-a_{\hat i})(b_i-b_{\hat i}) =
n \sum_{b_i\in B'} c_i (b_i-b_{\hat i}) \in n I.\]
But a $\Zz$-based rng is a free $\Zz$-module. Therefore
$n r_1=n r_2$ implies $r_1=r_2$ for all $r_1,r_2\in R$.
Because of $nw\in nI$, we get $w\in I$, i.e. $\bar w=0$.

The structure constants $\tilde N_{*,*}^*$ of $R/I$
for $b_i,b_j,b_k\in B'$ are
\[ \tilde N_{\BAR{b_i},\BAR{b_j}}^{\BAR{b_k}} =
N_{b_i,b_j}^{b_k}+
N_{b_i,b_j}^{b_{\hat k}}\]
and therefore they are nonnegative.
\end{proof}

\subsection{$\Zz$-based rngs as factor rings}

An interesting task is to find for a given $\Zz$-based rng $R'$
a $\Zz$-based rng $R$ with nonnegative structure constants such that
$R'$ is a factor ring of $R$.

Let $W$ be the set of one dimensional subspaces of $\Cz^n$ spanned
by products of columns of $s$,
\[ W:=\Big\{ \langle \prod_{\nu=1}^r s_{i_\nu} \rangle \mid
0\le i_1,\ldots,i_r<n,\: r\in\Nz \Big\}. \]
Denote by $K$ the field extension of $\Qz$ given by the entries
of $s$, by $\Oc$ its ring of algebraic integers (see proposition
\ref{algint}) and by
$C$ the set of all roots of unity in $\Oc$ and $0$,
so $C=\{ 0,\zeta,\ldots,\zeta^q\}$ for some $q\in\Nz$
and primitive $q$-th root of unity $\zeta$.

\begin{lem}\label{lem1}
If $W$ is finite, then there are $\mu_0,\ldots,\mu_{n-1}\in\Oc$ such that
$s_{ij}\in\mu_j C$ for all $0\le i,j<n$.
\end{lem}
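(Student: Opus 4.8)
The plan is to exploit the finiteness of $W$ one column at a time. Fix an index $j$. Since every power $s_j^r$ is a product of columns, the line $\langle s_j^r\rangle$ lies in $W$ for every $r\in\Nz$; because $W$ is finite, the pigeonhole principle yields exponents $a<b$ with $\langle s_j^a\rangle=\langle s_j^b\rangle$, so that $s_j^{\,b}=\lambda\,s_j^{\,a}$ for some $\lambda\in\Cz^\times$. Here $s_j\neq 0$ since $s$ is invertible, and the zero pattern of $s_j$ is preserved under taking powers, so $s_j^{\,a}\neq 0$ and $\lambda\neq 0$.

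Next I would read this relation entrywise. For each row $i$ with $s_{ij}\neq 0$ we obtain $s_{ij}^{\,b}=\lambda\,s_{ij}^{\,a}$, hence $s_{ij}^{\,b-a}=\lambda$. Thus all nonzero entries of the $j$-th column are $(b-a)$-th roots of one and the same number $\lambda$. Choosing any row $i_0$ with $s_{i_0 j}\neq 0$ and setting $\mu_j:=s_{i_0 j}$, it follows that for every other nonzero entry the ratio $v_{ij}:=s_{ij}/\mu_j$ satisfies $v_{ij}^{\,b-a}=\lambda/\lambda=1$, i.e. $v_{ij}$ is a root of unity; for rows with $s_{ij}=0$ I simply put $v_{ij}=0$.

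It then remains to settle the integrality claims. By Proposition \ref{algint} the entries of $s$ are algebraic integers lying in $K$, so $\mu_j\in\Oc$. Each $v_{ij}$ lies in $K$, being a quotient of two such entries, and is a root of unity, hence itself an algebraic integer; therefore $v_{ij}$ belongs to the group of roots of unity in $\Oc$, which together with $0$ is exactly $C$. Consequently $s_{ij}=\mu_j v_{ij}\in\mu_j C$ for all $i$, and running over all $j$ produces the required $\mu_0,\dots,\mu_{n-1}$.

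The argument is essentially forced once the pigeonhole step is in place; the only point needing a little care is the passage from ``root of unity in $K$'' to ``element of $C$'', which rests on the fact that the roots of unity in a number field form a \emph{finite} cyclic group, so that a single $q$ indeed serves for all columns simultaneously. I do not anticipate a genuine obstacle here: the finiteness of $W$ does all the work, converting the a priori transcendental-looking normalization of each column into a purely root-of-unity statement.
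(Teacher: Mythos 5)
Your proof is correct and takes essentially the same route as the paper: there, finiteness of $W$ likewise forces two powers $s_j^{k_1}, s_j^{k_2}$ of the $j$-th column to be linearly dependent, and reading this relation entrywise shows that the ratio $\mu=s_{i_1,j}s_{i_2,j}^{-1}$ of any entry to a fixed nonzero entry satisfies $\mu^{k_1}=\mu^{k_2}$, hence lies in $C$. Your handling of zero entries and of the integrality step ($\mu_j\in\Oc$, roots of unity in $K$ lying in $C$) is merely a more explicit spelling-out of what the paper leaves implicit.
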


\begin{proof}
Choose $i_1,i_2,j$ with $s_{i_2,j}\ne 0$ and $\mu:=s_{i_1,j}s_{i_2,j}^{-1}$.
Since $W$ is finite, the powers $s_j^{k_1}, s_j^{k_2}$ of the $j$-th column
are linearly dependent for some $k_1,k_2\in\Nz$, so we have
\[ s_{i_1,j}^{k_1}+\lambda s_{i_1,j}^{k_2}=0,\quad
s_{i_2,j}^{k_1}+\lambda s_{i_2,j}^{k_2}=0 \]
for $0\ne\lambda\in\Cz$. Hence
\[ \mu^{k_1}s_{i_2,j}^{k_1}+\lambda \mu^{k_2}s_{i_2,j}^{k_2}=0,\quad
\mu^{k_1}s_{i_2,j}^{k_1}+\lambda \mu^{k_1}s_{i_2,j}^{k_2}=0 \]
which implies $(\mu^{k_1}-\mu^{k_2})\lambda s_{i_2,j}^{k_2}=0$, i.e.
$\mu$ is a root of unity, $\mu\in C$.
\end{proof}

If for example all entries of $\frac{1}{\mu}s$ are roots of unity
for some $\mu\in\Nz$, then the columns of $\frac{1}{\mu}s$ generate
(multiplicatively) a finite group.
In this case $R$ is a factor ring of a subring of $\mu$ times this group ring.
More generally, for such a construction we need at least the finiteness of the
set $W$.

\begin{thm}\label{fannsc}
Let $(R,B)$ be a $\Zz$-based rng with $s$-matrix $s$.
Assume that the columns $s_0,\ldots,s_{n-1}$ are of the
form $s_i=\mu_i v_i$ with $v_i\in C^n$ and $0\ne\mu_i\in\Zz$.
Then $R$ is a factor ring of a pointed $\Zz$-algebra
with nonnegative structure constants.
\end{thm}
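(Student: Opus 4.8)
The plan is to realise $R$ as a genuine quotient of a monoid/group algebra, where nonnegativity of the structure constants is automatic, rather than to try to equip a ``doubled'' basis of $R$ directly with a positive product; the latter fails because associativity of such a product would force a second, independent based-ring structure on $R$ that need not exist. I would first normalise: since $-1\in C$, I may absorb the sign of each $\mu_j$ into $v_j$ and assume $\mu_j>0$, and when no $v_j$ has a zero entry Proposition~\ref{allmuequal} forces $\mu_0=\dots=\mu_{n-1}=:\mu$. By Lemma~\ref{closedcomplex} the set $\{v_0,\dots,v_{n-1}\}$ is closed under complex conjugation, i.e.\ under componentwise inversion, so the $v_j$ generate a \emph{finite} abelian subgroup $G\le(\Cz^\times)^n$, finiteness holding because all entries lie in the finite set $C$.

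Next I would build the positive algebra. Let $A$ be the free $\Zz$-module on symbols $\{c_g\}_{g\in G}$ with product $c_gc_h:=\mu\,c_{gh}$; this is associative and commutative (it is ``$\mu$ times the group ring $\Zz[G]$'', and one checks via (\ref{VF}) that $\mu T$, with $T$ the character table of $G$, is an $s$-matrix for it), with structure constants $\mu\,\delta_{gh,l}\ge 0$. Let $R'\subseteq A$ be the subalgebra generated by the $c_{v_j}$. Writing $k_g$ for the minimal length of a word in the $v_j$ representing $g$, the identity $c_{v_{i_1}}\cdots c_{v_{i_\ell}}=\mu^{\ell-1}c_g$ shows that $R'=\bigoplus_{g\in G}\Zz\,\mu^{\,k_g-1}c_g$ is free with distinguished basis $b_g:=\mu^{\,k_g-1}c_g$, and a short computation gives $b_gb_h=\mu^{\,k_g+k_h-k_{gh}}b_{gh}$. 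Since word length is subadditive, $k_{gh}\le k_g+k_h$, so all these structure constants are nonnegative powers of $\mu$; thus $R'$ is a pointed $\Zz$-algebra with nonnegative structure constants.

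I would then exhibit the quotient. The assignment $c_g\mapsto\mu g\in\Cz^n\cong R_\Cz$ is a ring homomorphism $A\to R_\Cz$, since both sides send $c_gc_h$ to $\mu^2(gh)$, and its restriction $\psi$ to $R'$ sends $b_{v_j}=c_{v_j}\mapsto\mu v_j=s_j$. Hence $\psi(R')$ is the subring of $R_\Cz$ generated by $\{s_j\}$, namely $R$ itself, and $\{b_{v_j}\}_j$ maps bijectively onto the basis $B$. Therefore $R\cong R'/\ker\psi$ is a factor ring of $R'$ in the sense of Definition~\ref{facring}, with distinguished sub-basis $B'=\{b_{v_j}\}$, which proves the theorem whenever the $v_j$ have no zero entries.

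For arbitrary $v_j\in C^n$ I would run the same argument with the finite commutative \emph{monoid} $\mathcal M\subseteq C^n$ generated by the $v_j$ in place of $G$, and the monoid algebra $\Zz[\mathcal M]$ (structure constants in $\{0,1\}$) in place of $A$; the distinguished basis then becomes $d(g)\,c_g$ with the gcd-weight $d(g)=\gcd\{\prod_\nu\mu_{i_\nu}\mid\prod_\nu v_{i_\nu}=g\}$, and the divisibility $d(gh)\mid d(g)d(h)$ keeps the structure constants $d(g)d(h)/d(gh)$ nonnegative integers. The hard part is precisely this last case: when the $v_j$ are non-invertible and, since Proposition~\ref{allmuequal} no longer applies, the $\mu_j$ may differ, one must still check that the distinguished basis of $R'$ surjects onto all of $B$ — equivalently that $d(v_j)=\mu_j$ and $\psi(R')=R$ — which forces an analysis of the idempotents and supports of $\mathcal M$ rather than the clean group-theoretic bookkeeping available above.
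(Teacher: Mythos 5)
Your first construction, for the case where no $v_j$ has a zero entry, is correct and complete: Proposition \ref{allmuequal} gives a common $\mu$, Lemma \ref{closedcomplex} makes $\{v_j\}$ generate a finite group $G$, and your weighted group algebra $R'=\bigoplus_g \Zz\,\mu^{k_g-1}c_g$ with $b_gb_h=\mu^{k_g+k_h-k_{gh}}b_{gh}$ and the evaluation $c_g\mapsto \mu g$ do exactly what is needed; this is in fact a cleaner route to what the paper records separately as Proposition \ref{factorhad}. But that special case is not Theorem \ref{fannsc}: the theorem allows $0\in C$, so the $v_i$ may have zero entries and the $\mu_i$ may genuinely differ, and your proposal stops precisely there. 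You set up the right objects for the general case (the finite monoid $\mathcal{M}$, its monoid algebra, the gcd-weights $d(g)$ with $d(gh)\mid d(g)d(h)$), but you leave unproven the two statements the theorem rests on: that $\psi(R')\subseteq R$ (i.e.\ that each $d(g)g$ lies in the lattice spanned by the columns of $s$), and that $d(v_j)=\mu_j$, so that the distinguished basis of $R'$ really maps onto $B$. Deferring this to ``an analysis of the idempotents and supports of $\mathcal{M}$'' is a genuine gap, and also a wrong turn, since no such analysis is needed.

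The missing step is a Bezout/lattice argument, and it is exactly the point the paper's own proof singles out as critical (there it appears as the verification that $\gcd(a_1,a_2)v$ stays in the lattice). For $g\in\mathcal{M}$ write $d(g)=\sum_w a_wP_w$ with $a_w\in\Zz$, where $w$ runs over words in the $v_j$ representing $g$ and $P_w$ is the corresponding product of the $\mu_j$'s. Each $P_wg$ is a componentwise product of columns of $s$, hence lies in $R$ because the structure constants of $R$ are integers; therefore $d(g)g\in R$ for every $g$, which gives $\psi(R')\subseteq R$. Now take $g=v_j$: then $d(v_j)v_j=\frac{d(v_j)}{\mu_j}s_j\in R$ forces $\mu_j\mid d(v_j)$ because the $s_i$ are a $\Zz$-basis of $R$, while the length-one word gives $d(v_j)\mid\mu_j$; hence $d(v_j)=\mu_j$ and $\psi(d(v_j)c_{v_j})=s_j$, so $\psi(R')=R$ and $B'=\{d(v_j)c_{v_j}\}_j$ maps bijectively onto $B$, yielding the factor ring in the sense of Definition \ref{facring}. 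With this inserted, your argument becomes a complete proof and in fact a tidier alternative to the paper's, which instead orders the semigroup generated by the columns of $s$ by $\Nz v\subseteq\Nz w$, iterates a gcd- and product-closure on the set of maximal elements, and defines the multiplication by projecting products onto maximal elements; both proofs turn on the same lattice computation.
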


\begin{proof}
For all $a\in C$ we have $-a\in C$, so without loss of generality we
may choose $\mu_i> 0$ for all $i$.
Let $G$ be the multiplicative semigroup generated by the
columns of $s$. Define a partial order $\le$ on $\Cz^n$ by
\[ v \le w \quad \Leftrightarrow \quad
\Nz v \subseteq \Nz w \]
for $v,w\in \Cz^n$. Let $M$ be the set of maximal elements of $G$ with
respect to this order (they exist, because all entries of an element
of $G$ are of the form $\lambda\xi$ with $\lambda\in\Zz$ and $\xi$ a
nonnegative power of $\zeta$). The multiplicative semigroup $H$ generated
by $v_0,\ldots,v_{n-1}\in C^n$ is finite because $C^n$ is finite.
For each $v\in H$, there may be several elements $w_i$ of $M$ with $w_i\le v$.

Let $y_i$ be the number of elements of the multiplicative subsemigroup
of $H$ generated by $v_i$. So
\[ H = \Big\{ \prod_{i}^r v_i^{a_i} \mid 0< a_i \le y_i \mbox{ for all }
i \Big\}. \]
Let $v$ be any element of $G$, we write it in the form
$v=\prod_i \mu_i^{a_i} v_i^{a_i}$.
Define $c_i$ to be the number in $0,\ldots,y_i-1$ with
$ c_i \equiv a_i \:\:(\modu y_i)$.

Then $v=\prod_i \mu_i^{a_i-ci} t$ for $t=\prod_i \mu_i^{c_i} v_i^{c_i}$.
But $t\ge v$, so for all elements of $G$ we find a greater element
in the finite subset
\[ U = \Big\{ \prod_{i}^r s_i^{a_i} \mid 0< a_i \le y_i \mbox{ for all }
i \Big\}, \]
in particular, $M\subseteq U$ and it is finite. We need a set
of generators for our $\Zz$-algebra. For this we take $M$ and for
all $a_1 v, a_2 v\in M$, $v\in H$, $a_1,a_2\in\Nz$ we add the
element $\gcd(a_1,a_2)v$. We iterate this until no more new elements
appear and denote the resulting set by $M''$. (This iteration terminates
because $M$ is finite.)

It can happen that the product of two elements of $M''$ is
greater than all elements of $M''$, so we add all these products
and do the above iteration for $M''$ instead of $M$ again.
Since the set of elements of
$\Nz H$ that are greater or equal to all elements of $M$ is finite,
this process terminates. We need this construction because we
have to show that we do not leave the $\Zz$-lattice spanned by
the columns of $s$:

Since the structure constants are integers, all elements
of $G$ are in this lattice; the only critical point is when we
add $\gcd(a_1,a_2)v$. So as above, let $a_1 v, a_2 v\in M$, $v\in H$,
$a_1,a_2\in\Nz$ and $d:=\gcd(a_1,a_2)$. Suppose
\[ a_1 v=\sum_i q_i s_i, \quad a_2 v= \sum_i r_i s_i, \]
for some $q_i, r_i\in \Zz$. Consider $d v$, which is the
new element we want to add to $M$. We have
\[ dv = \sum_i \frac{q_i d}{a_1}s_i = \sum_i \frac{r_i d}{a_2}s_i \]
and since the $s_i$ form a basis, we get
$\frac{q_i d}{a_1}=\frac{r_i d}{a_2}$ for all $i$. This means
$r_i\frac{a_1}{a_2}=q_i\in\Zz$, so $\frac{a_2}{d}$ divides $r_i$
and $dv$ is in the lattice.

Let $M'$ be the finite set of now uniquely determined maximal
elements of $M''$. The product of two elements of $M'$ is not greater
than any element of $M'$. Remark that the columns of $s$ are included
in $M'$, because they are not smaller than any linear combination
with integer coefficients of the $s_i$ and every element of $M'$
is such a linear combination. The $s_i$ have already been
in $M$ at the beginning.
Let $G'$ be the multiplicative semigroup generated by $M'$.

Let $R'$ be the free $\Zz$-module with basis $M'$
(view $M'$ as a ``formal'' basis, so $R'$ has rank $|M'|$). We define
the multiplication on $R'$ in the following way. If $v,w\in M'$, then
as elements of $G'$ we have $v\cdot w=u\in G'$ and there exists
a unique $x\in M'$ with $x\ge u$, so $u=\mu x$ for some $\mu\in\Nz$.
We set $v\cdot w:=\mu x$ in $R'$. This multiplication is
associative, because the multiplication of $G'$ is associative.
Also, the structure constants of $R'$ are nonnegative because
$\mu\in\Nz$.

To get $R$ as a factor rng of $R'$, we only need to see that the
elements of $M'$ are in the $\Zz$-lattice spanned by the columns
of $s$, which we have proved above. The ideal $I$ in $R'$ is generated
by $x-\sum_i q_i s_i$ if $\sum_i q_i s_i$ is the linear decomposition
of $x$ in $R$ with respect to the columns of $s$.
We take the set $B'$ of columns of $s$ as a subset of $M'$.
Then $(R,B)\cong (R'/I,B'+I)$.
\end{proof}

\begin{rem}
The ring $R'$ of the last theorem is a subring of
the semigroupring $\Zz[H]$ of $H$.
\end{rem}

Now we can conclude:

\begin{prop}\label{factorhad}
Let $(R,B)$ be a $\Zz$-based rng with $s$-matrix $s$, $\zeta$
a complex $q$-th root of unity and $C':=\{\zeta,\ldots,\zeta^q\}$.
Assume that the columns $s_0,\ldots,s_{n-1}$ are of the
form $s_i=\mu_i v_i$ with $v_i\in {C'}^n$ and $0\ne\mu_i\in\Zz$.
Then $R$ is a factor ring of a subring with nonnegative structure
constants of $k$ times a group ring for $k=|\mu_i|$ for all $i$.
\end{prop}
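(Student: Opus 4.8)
The plan is to deduce this as a sharpening of Theorem~\ref{fannsc}, using Proposition~\ref{allmuequal} to pin down $k$ and then identifying the ambient algebra of that construction as a scaled group ring. First I would invoke Proposition~\ref{allmuequal}: since every entry of every $v_i$ lies in $C'$, i.e.\ is a nonzero $q$-th root of unity, it gives $|\mu_0|=\dots=|\mu_{n-1}|$; call this common value $k$, so that $s_i=\mu_i v_i$ with $\mu_i=\pm k$ and $k$ is well defined. The hypotheses of Theorem~\ref{fannsc} hold with $C=C'\cup\{0\}$, so that theorem produces a pointed $\Zz$-algebra $R'$ with nonnegative structure constants and an ideal $I$ with $R\cong R'/I$.

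Next I would show that, in the present case, the semigroup $H$ occurring in that construction, namely the multiplicative semigroup generated by $v_0,\dots,v_{n-1}$ inside $C'^n$, is actually a finite abelian \emph{group}. Indeed $C'^n$ is a finite abelian group under componentwise multiplication and $H$ is a sub-semigroup of it; being finite, $H$ contains $v_i^{\,\mathrm{ord}(v_i)}=e$ and $v_i^{-1}=v_i^{\,\mathrm{ord}(v_i)-1}$ for each generator, hence contains inverses of all its elements and is a subgroup. So $\Zz[H]$ is a genuine group ring, and by the remark following Theorem~\ref{fannsc} we already know $R'\subseteq\Zz[H]$.

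It then remains to upgrade this inclusion to ``$k$ times the group ring''. Every element of the semigroup $G$ generated by the columns $s_i=\mu_i v_i$ is a product of $r\ge1$ columns, hence of the shape $\pm k^{\,r}h$ with $h\in H$; since $r\ge1$ this equals $k\cdot(\pm k^{\,r-1}h)$ with $\pm k^{\,r-1}h\in\Zz[H]$, so it lies in $k\,\Zz[H]$, the algebra with underlying module $\Zz[H]$ but multiplication scaled by $k$ (distinguished basis $\{k h\mid h\in H\}$, all structure constants equal to $k$ or $0$, hence nonnegative). Because the distinguished basis $M'$ of $R'$ is a subset of $G$, we obtain $R'\subseteq k\,\Zz[H]$; thus $R'$ is a subring with nonnegative structure constants of $k$ times the group ring $\Zz[H]$, and $R\cong R'/I$ is a factor ring of it, as claimed.

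The only real step beyond bookkeeping is this last identification: recognizing that requiring the $v_i$ to have all entries nonzero roots of unity promotes $H$ from a semigroup to a group, so that ``group ring'' is literally correct, and that the uniform scaling $\mu_i=\pm k$ combined with $r\ge1$ factors forces every element of $G$, and in particular every basis element of $R'$, to be divisible by $k$ inside $\Zz[H]$. Everything else is a direct appeal to Theorem~\ref{fannsc} and its remark.
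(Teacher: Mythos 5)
Your proposal is correct and takes essentially the same route as the paper's own proof: Proposition~\ref{allmuequal} yields the common value $k$, the hypothesis $0\notin C'$ makes $H$ a finite group, and the conclusion rests on showing $R'\subseteq k\Zz[H]$, with Theorem~\ref{fannsc} supplying the factor-ring structure. The only step you assert without justification, namely $M'\subseteq G$, is not automatic from the construction in Theorem~\ref{fannsc} (there $M$ is enlarged by elements $\gcd(a_1,a_2)v$ and by certain products, which in general lie outside $G$); it is true here, but the cleanest repair is to note that every element of $M$ has coefficient $k^{r}$ with $r\ge 1$ relative to $H$, and that gcds and products of such coefficients are again of this form, so all elements of the enlarged set $M''$, and hence of $M'$, lie in $k\Zz[H]$ --- which is precisely the (equally terse) assertion made in the paper's proof.
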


\begin{proof}
We use the notation of the proof of Theorem \ref{fannsc}.
Since $0\notin C'$, the set $H$ is a group.
By proposition \ref{allmuequal} we have $|\mu_i|=|\mu_j|$
for all $i,j$ and since without loss of generality $\mu_i>0$
for all $i$, they are all equal to some $k\in\Nz$.
So all elements of $M'$ lie in $k\Zz[H]$, in particular
$R'\le k\Zz[H]$.
\end{proof}

\begin{quest}
Assume that the above set of spaces $W$ is finite and that
$K=\Qz[\zeta]$ for some root of unity $\zeta$.
Is it true that $R$ is a subalgebra of a factor ring of a pointed $\Zz$-algebra
with nonnegative structure constants?
\end{quest}

To prove this, it would be enough to construct a matrix
$s'$ starting from $s$ satisfying the assumptions of Theorem \ref{fannsc}
by using the fact that $\Oc=\Zz[\zeta]$ is a pointed $\Zz$-algebra
with basis given by some powers of $\zeta$.

\section{Examples: some modular tensor categories}

\subsection{Quantum doubles of finite groups}

Let $G$ be a finite group and $k$ an algebraically closed field
of characteristic $0$. The quantum double $D(G)$ of $G$ is a Hopf
algebra isomorphic to $F(G)\otimes k[G]$ as a vector space, where
$F(G)$ is the algebra of functions on $G$ into $k$ and $k[G]$ is
the group algebra. Its algebra and coalgebra structure will not
be used here, see \cite{bBaK}, 3.2 for more details or
\cite{CGR} for a twisted version.

The category of finite dimensional representations
$\Rep_f(D(G))$ of $D(G)$ is a modular tensor category
(see \cite{bBaK}). Its Grothendieck ring $R(D(G))$ is a fusion algebra,
so it is a $\Zz$-based ring with nonnegative structure constants.
There is an explicit formula for the corresponding $s$-matrix.

The irreducible representations of $D(G)$ are parametrized by
pairs $(\bar g,\chi)$ where $\bar g$ is a conjugacy class of $G$
and $\chi$ an irreducible character of the centralizer of $g$
in $G$. We choose one representative $g$ for each class $\bar g$
and denote the irreducible modules by $(g,\chi)$.
So we may identify the basis elements of $R(D(G))$ with
pairs $(g,\chi)$.

Let $e$ denote the identity element of $G$ and $R:=R(D(G))$.
Assume the existence of an irreducible $D(G)$-module $(e,\chi)$
with $(e,\chi)\cdot (e,\chi)=(e,1)$, where ``$\cdot$'' is the
tensor product in $R$. This just means that $\chi^2=1$,
so for example if $G=S_n$, we can choose the sign character for
$\chi$. Let
\[ I := \langle (e,1)-(e,\chi) \rangle \unlhd R, \]
so in $R/I$ we identify each basis element $(g,\psi)$ with
$(e,\chi)(g,\psi)$.

Assume that $(e,\chi)$ acts via multiplication as a
permutation on the basis elements of $R(D(G))$. Then by theorem
\ref{ord2facalg}, the factor ring $R(D(G))/I$ is a pointed
$\Zz$-algebra with nonnegative structure constants.

\begin{exm}
Let $G=S_3$. The $s$-matrix of $R(D(G))/I$ is
\[ \left( \begin{array}{cccccc}
1 & 2 & 3 & 2 & 2 & 2 \\
1 & 2 & -3 & 2 & 2 & 2 \\
1 & 2 & 0 & -1 & -1 & -1 \\
1 & -1 & 0 & -1 & -1 & 2 \\
1 & -1 & 0 & -1 & 2 & -1 \\
1 & -1 & 0 & 2 & -1 & -1 \\
\end{array} \right). \]
\end{exm}

\subsection{Affine Kac-Moody algebras}

let ${\frak g}(A)$ be an affine Kac-Moody algebra of arbitrary
untwisted type $X_l^{(1)}$ or $A_{2l}^{(2)}$ and
denote the fundamental weights by $\Lambda_0,\ldots,\Lambda_l$.

For each positive integer $k$, let $P_+^k$ be the finite set
\[ P_+^k:=\Big\{ \sum_{j=0}^l \lambda_j\Lambda_j\mid
\lambda_j\in\Zz, \lambda_j\ge 0, \sum_{j=0}^l a_j^\vee\lambda_j=k \Big\}. \]
Kac and Peterson defined a natural $\Cz$-representation of the group
$\mbox{SL}_2(\Zz)$ on the subspace spanned by the affine characters of
${\frak g}$ which are indexed by $P_+^k$.
The image of $\tiny \begin{pmatrix} 0&-1\\ 1&0 \end{pmatrix}$ under
this representation is determined in Theorem $13.8$ of \cite{vK}.
This is the so-called \defst{Kac-Peterson matrix}:
\[ S_{\Lambda,\Lambda'} = c\sum_{w\in W^\circ} \det(w)
\exp\left({-\frac{2\pi \mathrm{i} (\overline\Lambda+\bar\rho\mid
 w(\overline\Lambda'+\bar \rho))}{k+h^\vee}}\right), \]
where $\Lambda,\Lambda'$ runs through $P_+^k$, $(\cdot\mid\cdot)$ is
the normalized bilinear form of chapter $6$ of \cite{vK},
$W^\circ$ is the Weyl group of $\frak g^\circ$ and $c\in\Cz$ some constant
which is unimportant for us, since we want to use the matrix in Verlinde's
formula (section \ref{smat}, (\ref{VF})).

Each of these matrices defines a $\Zz$-based rng (even a fusion algebra).
A classification of the matrices belonging to type $X_l^{(1)}$ up to isomorphism
was given by Gannon in \cite{tG}.

\begin{exm}
Lusztig \cite{gL} gives an interpretation of the Fourier matrix
for dihedral groups via the above fusion algebra of type $A_1^{(1)}$,
level $k$. Denote this algebra by $R$. Then the Fourier matrix is
the $S$-matrix of a factor ring of the algebra belonging to a closed
subset of $R\otimes R$.
\end{exm}

\begin{exm}
Geck and Malle \cite{mGgM} use a similar construction (they choose another ideal)
to become the Fourier matrix for the dihedral group with non-trivial
automorphism. The resulting algebra has negative structure constants,
so it is a $\Zz$-based ring.
\end{exm}

\begin{exm}
The author \cite{mC} has interpreted many of the Fourier matrices associated to
the exceptional spetses as factor rings of algebras corresponding to
Kac-Peterson matrices.

For example, let $R$ be the affine ring corresponding to the Kac-Peterson
matrix of type $B_3^{(1)}$ and level $2$. The algebras corresponding to
the Fourier matrix of some families of the exceptional complex reflection
group $G_{24}$ are isomorphic to $R'/I$ for some ideal $I$,
where $R'$ is a subring (to a closed subset) of $R\otimes Z_{4}$ ($Z_{4}$
is the group ring of the cyclic group with $4$ elements).
\end{exm}

\section{Example: Hadamard matrices}

\subsection{General case}

Let $s$ be a Hadamard matrix of \defst{order} $4k$, i.e.
\[ s\in\{\pm 1\}^{4k\times4k}, \quad
   s s^T=4kI, \quad k\in\Nz, \]
where $I$ denotes the identity matrix. We assume $k>1$ and $s_{i,1}=1$ for all
$i$.

\begin{prop}\label{ZLattice}
The $\Zz$-lattice $R$ spanned by the columns of $k\cdot s$ is closed under
componentwise multiplication, i.e., if $b_1,\ldots,b_{4k}$ are the columns of
$ks$, then
\[ b_i b_j = \sum_m N_{ij}^m b_m \]
for some $N_{ij}^m\in \Zz$. We have
\[ N_{ij}^m = \frac{1}{4} \sum_l s_{li} s_{lj} s_{lm} \]
for all $i,j,m$.
\end{prop}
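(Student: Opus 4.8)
The plan is to separate the statement into two parts: the closed form for the structure constants, which is pure linear algebra, and the integrality $N_{ij}^m\in\Zz$, which is the only real content (and which simultaneously gives closure under componentwise multiplication).

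First I would record the orthogonality of the columns. Since $s$ is square and $ss^T=4kI$, it follows that $s^{-1}=\frac1{4k}s^T$ and hence $s^Ts=4kI$; in particular the columns of $s$ satisfy $\sum_l s_{li}s_{lm}=4k\,\delta_{im}$ and form a basis of $\Cz^{4k}$. The componentwise product $b_ib_j$ has $l$-th entry $k^2s_{li}s_{lj}$, so writing $b_ib_j=\sum_m N_{ij}^m b_m=\sum_m N_{ij}^m\,k\,s_m$ and pairing with the column $s_m$ gives
\[ \sum_l k^2 s_{li}s_{lj}s_{lm}=N_{ij}^m\,k\cdot 4k, \]
whence $N_{ij}^m=\frac14\sum_l s_{li}s_{lj}s_{lm}$. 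Because the columns of $s$ form a basis, these coefficients are uniquely determined, so it only remains to check that they are integers.

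The hard part is thus the divisibility $\sum_l s_{li}s_{lj}s_{lm}\equiv 0\pmod 4$. I would prove it by passing to indicator variables: write $s_{li}=1-2\epsilon_{li}$ with $\epsilon_{li}\in\{0,1\}$ the indicator of $s_{li}=-1$, and similarly for $j$ and $m$. Expanding the product and summing over the $4k$ rows yields
\[ \sum_l s_{li}s_{lj}s_{lm}=4k-2\sum_l(\epsilon_{li}+\epsilon_{lj}+\epsilon_{lm})+4\sum_l(\epsilon_{li}\epsilon_{lj}+\epsilon_{li}\epsilon_{lm}+\epsilon_{lj}\epsilon_{lm})-8\sum_l \epsilon_{li}\epsilon_{lj}\epsilon_{lm}. \]
The quadratic sum is visibly divisible by $4$ and the cubic sum by $8$, so everything reduces to showing that $\sum_l(\epsilon_{li}+\epsilon_{lj}+\epsilon_{lm})$ is even, i.e. that each column of $s$ contains an even number of entries equal to $-1$.

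This last point is exactly where the normalization $s_{l,1}=1$ enters. Orthogonality of column $i$ with the all-ones first column gives $\sum_l s_{li}=4k\,\delta_{i1}$, so column $i$ has either $0$ (for $i=1$) or $2k$ (for $i\neq 1$) entries equal to $-1$; in both cases $\sum_l\epsilon_{li}$ is even. Hence the linear term above is divisible by $4$, and therefore so is $\sum_l s_{li}s_{lj}s_{lm}$, for all choices of $i,j,m$, including coincidences and the index $1$. I expect the only subtlety to be bookkeeping in the expansion; conceptually the argument is forced, and the assumption $k>1$ plays no role here, being needed only for the later claim about negative structure constants.
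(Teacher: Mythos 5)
Your proof is correct, but the key divisibility step runs along a genuinely different line than the paper's. The paper treats the generic case $|\{1,i,j,m\}|=4$ by permuting rows and using the \emph{pairwise} orthogonality of columns $i,j,m$ to organize the rows into eight blocks of sizes $a$ and $k-a$; it then reads off that the componentwise triple product has exactly $4a$ entries equal to $-1$, so $\sum_l s_{li}s_{lj}s_{lm}=4k-8a\equiv 0\ (\operatorname{mod}4)$, and the degenerate index choices are dispatched separately by reduction to orthogonality or to counting $-1$'s in a single column. You instead substitute $s_{li}=1-2\epsilon_{li}$ and observe that modulo $4$ everything collapses to the parity of $\sum_l(\epsilon_{li}+\epsilon_{lj}+\epsilon_{lm})$, which is even because each column of a normalized Hadamard matrix contains $0$ or $2k$ entries equal to $-1$. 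Your route is more economical in hypotheses: it uses only that the number of rows is divisible by $4$ and that each column is orthogonal to the all-ones column (never the mutual orthogonality of $i,j,m$), and it handles repeated indices and the index $1$ uniformly rather than by cases. What the paper's block count buys in exchange is finer information that is recycled later: the identity $\sum_l s_{li}s_{lj}s_{lm}=4k-8a$ with $a=|\xi_i\cap\xi_j\cap\xi_m|$ is exactly what underlies Lemma \ref{capdiff}, the mod-$8$ profile remark, and Corollary \ref{strcnotzero} on the range and parity of the structure constants for odd $k$; your mod-$4$ computation alone does not recover these. Your derivation of the closed formula for $N_{ij}^m$ by pairing with a column is the same in substance as the paper's appeal to $s^{-1}=\frac{1}{4k}s^T$ and Verlinde's formula, and your closing remark that $k>1$ is irrelevant to this proposition is accurate.
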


\begin{proof}
Choose $i,j,m$ such that $|\{1,i,j,m\}|=4$. Then without loss of generality
(after permuting some rows), the transposed of the columns $i,j,m$ will be
\[ \begin{array}{cccccccc}
- \ldots - & - \ldots - & - \ldots - & - \ldots - &
+ \ldots + & + \ldots + & + \ldots + & + \ldots + \\
- \ldots - & - \ldots - & + \ldots + & + \ldots + &
- \ldots - & - \ldots - & + \ldots + & + \ldots + \\
\underbrace{- \ldots -}_{a} & \underbrace{+ \ldots +}_{k-a} &
\underbrace{- \ldots -}_{k-a} & \underbrace{+ \ldots +}_{a} &
\underbrace{- \ldots -}_{k-a} & \underbrace{+ \ldots +}_{a} &
\underbrace{- \ldots -}_{a} & \underbrace{+ \ldots +}_{k-a}
\end{array} \]
for some $1\le a < k$,
where ``$+$'', ``$-$'' stands for ``$1$'' respectively ``$-1$''.
That $a$ and $k-a$ appear several times comes from the mutual orthogonality
of the three columns:
Column $i$ and $j$ must have exactly $k$ common $-1$'s because they are
orthogonal. Suppose that column $m$ has $a$ $-1$'s at these positions. Then it
has to have $k-a$ $-1$'s where column $i$ is $-1$ and column $j$ is $1$
because it is orthogonal to column $i$, and so on.

Counting together, the product of the three columns is a vector with $4a$
$-1$'s. This yields $\sum_l s_{li} s_{lj} s_{lm} \equiv 4k-8a \equiv 0 \:\: (\modu 4)$.
If one of the indices $i,j,m$ is $1$ or if two of them are equal, then
$\sum_l s_{li} s_{lj} s_{lm}$ reduces to orthogonality or to counting $-1$'s in
a column.

So the numbers $N_{ij}^m$ defined above are integers. That these numbers are the
structure constants of the lattice spanned by the columns of $s$ follows from
$s^{-1}=\frac{1}{4k}s^{T}$ and Verlinde's formula (\ref{VF}).
\end{proof}

\begin{rem}
Theorem \cite{wdW}, 9.9 states that $p_{ijlm}\equiv 4k \:\:(\modu 8)$, where
\[ p_{ijlm}=\left|\sum_q s_{qi} s_{qj} s_{ql} s_{qm} \right|. \]
In particular for $m=1$ we get the absolute values of $4$ times the structure constants.
So the last proposition is a corollary of this Theorem. The \defst{profile}
of a Hadamard matrix is the map $\pi : \Nz \rightarrow \Zz$, where $\pi(q)$ is
the number of sets $\{i,j,l,m\}$ of four distinct columns such that $p_{ijlm}=q$.
Profiles are used to test if Hadamard matrices are not equivalent and
they also give some results about the number of equivalence classes.
\end{rem}

\begin{cor}
A Hadamard matrix gives rise to a $\Zz$-based rng $R$.
\end{cor}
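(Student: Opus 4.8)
The plan is to verify directly that the data $(R,B)$ supplied by Proposition \ref{ZLattice} satisfies every axiom in the definition of a $\Zz$-based rng, most of which will turn out to be immediate. First I would record that Proposition \ref{ZLattice} already provides a commutative $\Zz$-algebra $R$ that is free as a $\Zz$-module on the basis $B=\{b_1,\dots,b_{4k}\}$ of columns of $ks$ and is closed under multiplication. Writing $\varphi(b_i)$ for the column $ks_i\in\Cz^{4k}$, the $s$-matrix of $(R,B)$ in the sense of Section \ref{smat} is precisely $ks$: indeed $(ks)^{-1}=\tfrac{1}{4k^2}s^T$ turns Verlinde's formula (\ref{VF}) into exactly the expression $N_{ij}^m=\tfrac{1}{4}\sum_l s_{li}s_{lj}s_{lm}$ of Proposition \ref{ZLattice}. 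Since $s_{i,1}=1$ for all $i$, the first column of $ks$ is constant equal to $k$, so we are in the situation of Example \ref{exkkk}.

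Next I would exhibit the identity element and the involution. The unit of $\Cz^{4k}$ under componentwise multiplication is the all-ones vector $\mathbf{1}$, and I would determine the coefficients from $\sum_i e_i\,(ks_i)=\mathbf{1}$; inverting via $(ks)^{-1}=\tfrac{1}{4k^2}s^T$ gives $e_i=\tfrac{1}{4k^2}\sum_l s_{li}$. Orthogonality of the all-ones first column of $s$ to every other column then forces $e_i=0$ for $i\ne 1$ and $e_1=\tfrac1k$, so $e=\tfrac1k b_1$ is the required element of $R_\Cz$. For the involution I would take $\sim$ to be the identity map of $R$; this is the only possible choice, being forced by Lemma \ref{closedcomplex}, which says $\tilde b_i=\bar b_i$, together with the fact that the columns of $s$ are real. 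With $\sim=\id$ we have $\tilde B=B$, the symmetry $N_{\tilde i\tilde j}^m=N_{ij}^{\tilde m}$ collapses to the tautology $N_{ij}^m=N_{ij}^m$, and $\tilde e=e$ holds because $e=\tfrac1k b_1$ has a real coefficient.

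The one axiom requiring any computation is $\tau(\tilde b_i b_j)=\delta_{ij}$. Since $e=\tfrac1k b_1$, the trace is $\tau=\tfrac1k\tau_1$, and as $\sim=\id$ the condition reads $\tau(b_ib_j)=\tfrac1k N_{ij}^1=\delta_{ij}$, that is $N_{ij}^1=k\,\delta_{ij}$. This I would read off from Proposition \ref{ZLattice}: because $s_{l,1}=1$,
\[ N_{ij}^1=\frac14\sum_l s_{li}s_{lj}s_{l1}=\frac14\sum_l s_{li}s_{lj}=\frac14\cdot 4k\,\delta_{ij}=k\,\delta_{ij}, \]
the middle equality being the column orthogonality $s^Ts=4kI$ of the Hadamard matrix. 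This is essentially the entire content of the statement, so there is no genuine obstacle: every axiom is either inherited from Proposition \ref{ZLattice}, reduced to a triviality once $\sim$ is taken to be the identity, or reduced to the single orthogonality identity displayed above. In effect the corollary is the observation that a rescaled Hadamard matrix is an instance of Example \ref{exkkk}.
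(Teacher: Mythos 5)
Your proposal is correct and follows essentially the same route as the paper: both take the trivial involution, identify $e=\tfrac1k b_1$ (so $\tau=\tfrac1k\tau_1$), reduce the trace axiom to $N_{ij}^1=k\,\delta_{ij}$ via the orthogonality of the Hadamard columns, and conclude by recognizing the situation of Example \ref{exkkk}. The paper's proof is merely terser, stating $b_i^2=kb_1$ and $N_{ij}^1=0$ for $i\ne j$ without writing out the computation you made explicit.
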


\begin{proof}
By the previous proposition, we may associate a $\Zz$-algebra to each Hada\-mard
matrix. It has a basis consisting of elements $b_i$ with $b_i^2=kb_1$. Also,
$N_{i,j}^1=0$ for $i\ne j$.
So it is a $\Zz$-based rng with trivial involution $\sim$ and is of the
same form as example \ref{exkkk}.
\end{proof}

\begin{defi}
We call the $\Zz$-algebra $R$ of proposition \ref{ZLattice} the \defst{$\Zz$-based
rng} or the \defst{ring} associated to the Hadamard matrix $s$.
\end{defi}

\begin{lem}
If there are $i,j,l$ such that $b_i b_j=kb_l$ and $i,j,l\ne 1$ then $k$ is even
and $k\cdot \Zz[\Zz/2\Zz\times \Zz/2\Zz]$ is a subalgebra of $R$.
\end{lem}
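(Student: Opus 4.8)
The plan is to reduce the hypothesis to a statement about the underlying $\pm 1$ columns of $s$, handle the subalgebra claim by recognizing a Klein four-group, and then extract the parity of $k$ from a counting argument that brings in a fifth column. Since $b_m=ks_m$ is the $m$-th column of $ks$ and each $s_m$ is $\pm1$-valued, componentwise one has $b_ib_j=k^2s_is_j$ and $kb_l=k^2s_l$, so the hypothesis $b_ib_j=kb_l$ is equivalent to $s_is_j=s_l$. I would first note that $1,i,j,l$ are pairwise distinct: $i=j$ forces $s_l=s_i^2=s_1$ (the all-ones column), hence $l=1$, a contradiction, and $i=l$ forces $s_j=s_1$, hence $j=1$, again a contradiction. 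Using $s_m^2=s_1$ for every $m$, multiplying $s_is_j=s_l$ by $s_i$ and by $s_j$ gives $s_is_l=s_j$ and $s_js_l=s_i$. Thus $\{s_1,s_i,s_j,s_l\}$ is closed under componentwise multiplication, and with $s_1$ as identity and every element of order $2$ it is a copy of $\Zz/2\Zz\times\Zz/2\Zz$.

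For the subalgebra assertion I would simply read off the multiplication table of the $\Zz$-span of $\{b_1,b_i,b_j,b_l\}$. All products land in this span: $b_1b_x=kb_x$ for each generator $x$, each of $b_i,b_j,b_l$ squares to $kb_1$, and $b_ib_j=kb_l$, $b_ib_l=kb_j$, $b_jb_l=kb_i$ by the preceding paragraph. Hence the span is a subalgebra, and the relations $b_xb_y=kb_{xy}$ are precisely those of $k\cdot\Zz[\Zz/2\Zz\times\Zz/2\Zz]$ under the identification $b_1\mapsto\beta_e$, $b_i\mapsto\beta_a$, $b_j\mapsto\beta_b$, $b_l\mapsto\beta_{ab}$ of its distinguished basis.

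The real content is the parity of $k$, and the step I expect to be the crux is to use a column lying outside the four. I would partition the $4k$ rows into the four classes $G_{++},G_{+-},G_{-+},G_{--}$ determined by the pattern $(s_{qi},s_{qj})\in\{+1,-1\}^2$. Orthogonality of columns $1,i,j$ (each pair, and each with the all-ones column $1$) forces each class to contain exactly $k$ rows. Since $k>1$ we have $4k>4$, so some column $p\notin\{1,i,j,l\}$ exists. Writing $x_g$ for the number of rows in class $g$ with $s_{qp}=+1$, the sum of $s_{qp}$ over a class of size $k$ is $2x_g-k$, so orthogonality of $p$ with $1,i,j,l$ becomes $\sum_g x_g=2k$ together with $x_{++}+x_{+-}=x_{++}+x_{-+}=x_{++}+x_{--}=k$. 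These yield $x_{+-}=x_{-+}=x_{--}=k-x_{++}$, hence $3k-2x_{++}=2k$, i.e. $x_{++}=k/2$; as the $x_g$ are integers, $k$ is even.

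The whole difficulty is concentrated in seeing that the fifth column is what breaks the symmetry: the four Klein columns by themselves impose no parity condition, which is exactly why the hypothesis $k>1$ is needed. This is consistent with the order-$4$ case $k=1$, where the four columns of the $4\times4$ Hadamard matrix already form a Klein group yet $k$ is odd. Everything else is routine bookkeeping; the orthogonality relations that give the class sizes and the counts $x_g$ are the only computations, and they are short.
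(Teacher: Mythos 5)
Your proof is correct, but it follows a different route from the paper's. The paper disposes of this lemma in two lines: $\{b_1,b_i,b_j,b_l\}$ would be a closed subset of size four, and Corollary \ref{hadclsts} (proved in the following subsection) says that for odd $k$ the only closed subsets are the pairs $\{b_1,b_i\}$ and the whole basis. That corollary in turn rests on the parity of the structure constants, Lemma \ref{capdiff}: $N_{ij}^m = k-2|\xi_i\cap\xi_j\cap\xi_m|$, which is odd (hence nonzero) for all $m$ with $|\{1,i,j,m\}|=4$ when $k$ is odd, so no proper closed subset can contain two distinct non-identity elements. Your fifth-column counting argument is exactly this parity obstruction unwound by hand: the hypothesis $s_is_j=s_l$ forces $N_{ij}^p=\frac14\sum_q s_{qi}s_{qj}s_{qp}=\frac14\sum_q s_{ql}s_{qp}=0$ for every $p\notin\{1,i,j,l\}$, and your conclusion $x_{++}=k/2$ is the statement $|\xi_i\cap\xi_j\cap\xi_p|=k/2$ in disguise. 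What your version buys: it is self-contained and local (the paper's proof is a forward reference to machinery developed afterwards), and it makes explicit exactly where $k>1$ is used, namely the existence of a fifth column --- a point you illustrate well with the $k=1$ case, where the four columns of the $4\times 4$ Hadamard matrix do form a Klein group with $k$ odd. What the paper's version buys: brevity, and the structural insight that the lemma is a special case of a complete classification of closed subsets for odd $k$. Your verification of the subalgebra claim and of the pairwise distinctness of $1,i,j,l$ matches what the paper treats as immediate ("up to the factor $k$, this set forms a group isomorphic to $\Zz/2\Zz\times\Zz/2\Zz$"), so there is no gap there either.
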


\begin{proof}
By the assumptions, $\{b_1,b_i,b_j,b_l\}$ is a closed subset.
Up to the factor $k$, this set forms a group isomorphic to
$\Zz/2\Zz\times \Zz/2\Zz$. When $k$ is odd, such a closed subset
can not exist, see corollary \ref{hadclsts}.
\end{proof}

To describe the structure constants $N_{ij}^l$ of the ring of a Hadamard
matrix in another way, we introduce the following notation. Let
\[ \xi_i := \{ j \mid s_{ji} = -1 \} \]
and $\Delta$ denote the symmetric difference of sets.

\begin{lem} For $i,j$ with $|\{1,i,j\}| = 3$ we have
\[ |\xi_i|=|\xi_i\Delta \xi_j| = 2|\xi_i \cap \xi_j|=2k .\]
\end{lem}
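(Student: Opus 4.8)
The plan is to read off all three equalities from the Hadamard relation $ss^T=4kI$ together with the normalization $s_{l,1}=1$, by translating the vanishing of the relevant column inner products into cardinalities of the sets $\xi_i$, $\xi_i\cap\xi_j$ and $\xi_i\Delta\xi_j$. First I would establish $|\xi_i|=2k$. Because $i\neq 1$, column $i$ is orthogonal to the all-ones column $1$, so $\sum_l s_{li}=\sum_l s_{l,1}s_{li}=0$. Since each of the $4k$ entries of column $i$ is $\pm 1$, the $+1$'s and the $-1$'s must balance, each occurring $2k$ times; the number of $-1$'s is precisely $|\xi_i|$.

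Next I would compute $|\xi_i\Delta\xi_j|$ from the orthogonality of columns $i$ and $j$ (this is where $i\neq j$ enters). For each row $l$ the product $s_{li}s_{lj}$ equals $+1$ when the two columns agree and $-1$ when they differ, so $0=\sum_l s_{li}s_{lj}$ forces exactly $2k$ rows of disagreement out of $4k$. The rows of disagreement are exactly those in which one of $s_{li},s_{lj}$ equals $-1$ and the other $+1$, i.e. the rows indexed by the symmetric difference $\xi_i\Delta\xi_j$. Hence $|\xi_i\Delta\xi_j|=2k$.

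The remaining equality then follows from the elementary counting identity $|\xi_i|+|\xi_j|=2|\xi_i\cap\xi_j|+|\xi_i\Delta\xi_j|$. Substituting $|\xi_i|=|\xi_j|=2k$ and $|\xi_i\Delta\xi_j|=2k$ yields $4k=2|\xi_i\cap\xi_j|+2k$, so $|\xi_i\cap\xi_j|=k$ and thus $2|\xi_i\cap\xi_j|=2k$, giving all three asserted values. There is essentially no obstacle in this argument; it is pure bookkeeping converting the two inner products $\langle$col $1$, col $i\rangle$ and $\langle$col $i$, col $j\rangle$ into set cardinalities. The only point needing care is the hypothesis $|\{1,i,j\}|=3$, which guarantees simultaneously that $i,j\neq 1$ (so that orthogonality against the all-ones column yields the balance $|\xi_i|=2k$) and that $i\neq j$ (so that columns $i$ and $j$ are distinct and genuinely orthogonal).
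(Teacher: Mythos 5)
Your proposal is correct and follows essentially the same route as the paper: $|\xi_i|=2k$ from orthogonality of column $i$ with the all-ones first column, $|\xi_i\Delta\xi_j|=2k$ from orthogonality of columns $i$ and $j$ (the diagonality of $ss^T$), and then $|\xi_i\cap\xi_j|=k$ by the inclusion--exclusion count. The paper's proof is just a terser version of exactly this bookkeeping, so there is nothing to add.
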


\begin{proof} Since $s_{i1}=1$ for all $i$, we have $|\xi_i|=|\xi_j|=2k$
for $i,j\ne 1$. If in addition $i\ne j$ then
$|\xi_i\Delta \xi_j|=2k$ since $ss^T$ is diagonal. Hence $|\xi_i\cap \xi_j|=k$.
\end{proof}

\begin{lem} For $i,j,l$ with $|\{1,i,j,l\}| = 4$ we have
\[ |\xi_i\Delta \xi_j\Delta \xi_l| = 4|\xi_i \cap \xi_j \cap \xi_l|
\quad \mbox{and} \quad
|\xi_i \cap \xi_j \cap \xi_l|= |\xi_i \backslash (\xi_j \cup \xi_l)|.\]
\end{lem}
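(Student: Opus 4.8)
The plan is to partition the $4k$ row indices according to their sign pattern in the three columns $i,j,l$ and to reduce everything to a single integer parameter. Concretely, for each $(\epsilon_1,\epsilon_2,\epsilon_3)\in\{+,-\}^3$ let $x_{\epsilon_1\epsilon_2\epsilon_3}$ denote the number of rows $r$ with $(s_{ri},s_{rj},s_{rl})=(\epsilon_1,\epsilon_2,\epsilon_3)$; these eight numbers partition $\{1,\ldots,4k\}$. Set $a:=x_{---}=|\xi_i\cap\xi_j\cap\xi_l|$. The whole lemma then amounts to solving the resulting linear system for the remaining seven block sizes in terms of $a$.

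First I would feed in the data already available. Since $s_{r1}=1$ for all $r$, the preceding lemma gives $|\xi_i|=|\xi_j|=|\xi_l|=2k$ as well as $|\xi_i\cap\xi_j|=|\xi_i\cap\xi_l|=|\xi_j\cap\xi_l|=k$. Reading each pairwise intersection as a sum of the two blocks having $-$ in the two relevant columns yields $x_{--+}=x_{-+-}=x_{+--}=k-a$. Reading each single cardinality $|\xi_i|=2k$ as a sum of its four blocks and substituting then forces $x_{-++}=x_{+-+}=x_{++-}=a$, and finally, since all eight blocks sum to $4k$, we get $x_{+++}=k-a$. This is exactly the eight-block table, with multiplicities $a$ and $k-a$, that already appears in the proof of Proposition \ref{ZLattice}; one may alternatively just quote that table and skip the derivation.

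With all blocks expressed through $a$, both identities are immediate. A row lies in $\xi_i\backslash(\xi_j\cup\xi_l)$ exactly when its pattern is $-++$, so $|\xi_i\backslash(\xi_j\cup\xi_l)|=x_{-++}=a=|\xi_i\cap\xi_j\cap\xi_l|$, which is the second equation. For the first, a row lies in $\xi_i\Delta\xi_j\Delta\xi_l$ iff it has an odd number of $-$'s among the three columns, i.e.\ its pattern is one of $---,-++,+-+,++-$; summing the corresponding blocks gives $|\xi_i\Delta\xi_j\Delta\xi_l|=a+a+a+a=4a=4|\xi_i\cap\xi_j\cap\xi_l|$.

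There is no serious obstacle here: the only thing to get right is the bookkeeping of which blocks sum to which intersection. The one conceptual point worth stating is that the triple intersection $a$ is genuinely a free parameter. The Hadamard conditions are pairwise (orthogonality of two columns, together with the constant column indexed by $1$) and impose nothing further on three columns simultaneously, so $a$ is not pinned down; the content of the lemma is precisely the symmetry that forces the four odd-parity blocks to share the common value $a$.
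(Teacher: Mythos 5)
Your proof is correct and takes essentially the same approach as the paper: both rest on the eight-block sign-pattern decomposition of the rows, with multiplicities $a$ and $k-a$, that appears in the proof of Proposition \ref{ZLattice}. The only cosmetic difference is that the paper quotes that table for the first identity and obtains the second by inclusion--exclusion on $|\xi_i|$, whereas you re-derive the table from the pairwise cardinalities of the preceding lemma and read both identities off it directly.
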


\begin{proof} Let $a:=|\xi_i \cap \xi_j \cap \xi_l|$.
The proof of proposition \ref{ZLattice} gives the first equality.
Further,
\[ |\xi_i|=k+|\xi_i\cap \xi_j|=|\xi_i\backslash(\xi_j \cup \xi_l)|+
|\xi_i\cap \xi_j|+|\xi_i\cap \xi_l|-a, \]
so $2k=|\xi_i\backslash(\xi_j \cup \xi_l)|+k+k-a$.
\end{proof}

Thus for the structure constants we get
(this is also a consequence of the proof of Proposition \ref{ZLattice}):

\begin{lem}\label{capdiff}
For $i,j,m$ with $|\{1,i,j,m\}| = 4$ we have
\[ N_{ij}^m = k-\frac{1}{2}|\xi_i\Delta \xi_j\Delta \xi_m|
= k-2|\xi_i \cap \xi_j \cap \xi_m|.\]
\end{lem}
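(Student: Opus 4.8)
The plan is to reduce the Verlinde-type expression for $N_{ij}^m$ to a parity count over the rows and then close with the immediately preceding lemma. By Proposition \ref{ZLattice} we already have
\[ N_{ij}^m = \frac{1}{4}\sum_l s_{li}s_{lj}s_{lm}, \]
so the whole task is to evaluate this sign sum combinatorially and to recognize the triple symmetric difference inside it.

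First I would record that each factor $s_{li}$ equals $-1$ precisely when $l\in\xi_i$ and equals $+1$ otherwise. Consequently the product $s_{li}s_{lj}s_{lm}$ equals $-1$ exactly when $l$ lies in an \emph{odd} number of the three sets $\xi_i,\xi_j,\xi_m$, and $+1$ otherwise. The set of indices $l$ contained in an odd number of these sets is, by definition, the triple symmetric difference $\xi_i\Delta\xi_j\Delta\xi_m$; I set $D:=|\xi_i\Delta\xi_j\Delta\xi_m|$ for its cardinality. Since the matrix has $4k$ rows, exactly $D$ of the summands equal $-1$ and the remaining $4k-D$ equal $+1$, whence
\[ \sum_l s_{li}s_{lj}s_{lm} = (4k-D)-D = 4k-2D. \]
Dividing by $4$ yields $N_{ij}^m = k-\frac{1}{2}D = k-\frac{1}{2}|\xi_i\Delta\xi_j\Delta\xi_m|$, which is the first asserted equality.

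For the second equality I would simply invoke the preceding lemma, which gives $|\xi_i\Delta\xi_j\Delta\xi_m| = 4|\xi_i\cap\xi_j\cap\xi_m|$ whenever $|\{1,i,j,m\}|=4$; substituting this into $k-\frac{1}{2}D$ produces $k-2|\xi_i\cap\xi_j\cap\xi_m|$. There is no genuine obstacle here, and I do not expect a hard step: the only point demanding care is the parity bookkeeping that identifies the $-1$-valued summands with membership in the triple symmetric difference, and the hypothesis $|\{1,i,j,m\}|=4$ enters only to license the application of the earlier lemma in the final substitution.
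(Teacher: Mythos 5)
Your proof is correct and follows essentially the paper's own route: the paper states this lemma as a consequence of the sign-counting in the proof of Proposition \ref{ZLattice} together with the preceding lemma $|\xi_i\Delta\xi_j\Delta\xi_m|=4|\xi_i\cap\xi_j\cap\xi_m|$, which is exactly what you use. Your explicit parity identification (product $=-1$ iff $l$ lies in an odd number of the three sets) just spells out the counting the paper leaves implicit, and your observation that the hypothesis $|\{1,i,j,m\}|=4$ is only needed for the second equality is accurate.
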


\subsection{Odd $k$}

The most important open question concerning Hadamard matrices is certainly the
existence of such matrices in all dimensions $4k$.
If $k$ is even, then we can take a matrix of size $2k \times 2k$
and tensor it with a $2 \times 2$-matrix. So from now on, $k$ will
always be odd.

From Lemma \ref{capdiff}, we then get:

\begin{cor}\label{strcnotzero}
Let $k$ be odd.
If $i,j,m$ are such that $|\{1,i,j,m\}| = 4$, then
\[ N_{ij}^m \equiv 1 \:\: (\modu 2). \]
In fact, we know even more: $N_{ij}^m \in \{-k+2,-k+4,\ldots,k-2\}$.
\end{cor}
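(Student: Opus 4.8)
The plan is to read the whole statement off the identity from Lemma \ref{capdiff}, namely $N_{ij}^m = k - 2a$ with $a := |\xi_i \cap \xi_j \cap \xi_m| \in \Zz_{\ge 0}$, and then to pin down the exact range of the integer $a$. The parity assertion is then immediate: since $k$ is odd and $2a$ is even, $N_{ij}^m = k - 2a$ is odd, so $N_{ij}^m \equiv 1 \pmod 2$. For the sharper claim I would first note that, because $k$ is odd, the set $\{-k+2,-k+4,\ldots,k-2\}$ is exactly the set of \emph{odd} integers $n$ with $|n| < k$. Thus, given the parity statement, it suffices to prove the strict bound $-k < N_{ij}^m < k$, equivalently $1 \le a \le k-1$.

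The rough bound $0 \le a \le k$ is free: trivially $a \ge 0$, and $\xi_i \cap \xi_j \cap \xi_m \subseteq \xi_i \cap \xi_j$ with $|\xi_i \cap \xi_j| = k$ by the preceding lemma, so $a \le k$ and hence $-k \le N_{ij}^m \le k$. The real work is to exclude the two endpoints $a = 0$ and $a = k$. I would treat them uniformly. In either case exactly four of the eight possible sign-patterns of the triple $(i,j,m)$ are empty in the block decomposition of the proof of Proposition \ref{ZLattice}; adjoining the all-ones column $b_1$, the four columns $\{b_1,b_i,b_j,b_m\}$ then take only four distinct row-values, each occurring exactly $k$ times. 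In other words, restricted to these four columns the matrix is a $k$-fold row-repetition of a fixed $4\times 4$ Hadamard matrix $T$. (For $a=0$ one checks $s_i\odot s_j = s_m$, so $b_ib_j = kb_m$, which is also ruled out directly by the earlier lemma on closed subsets of $\Zz/2\Zz\times\Zz/2\Zz$ type; for $a=k$ one gets $s_i\odot s_j = -s_m$, i.e. $b_ib_j = -kb_m$.)

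The contradiction then comes from orthogonality with a fifth column. Since $k>1$ we have $4k > 4$, so there is a further column $c$ of $s$, and $c$ is orthogonal to each of the four columns above. Writing $x_p \in \{0,\ldots,k\}$ for the number of $-1$'s that $c$ has inside the $p$-th repeated block ($p=1,\ldots,4$), orthogonality of $c$ with the four columns collapses to the single linear system $T^{T}(k\mathbf 1 - 2x) = 0$, where $x = (x_1,\ldots,x_4)^{T}$ and $\mathbf 1$ is the all-ones vector. As $T$ is a $4\times 4$ Hadamard matrix it is invertible, forcing $k\mathbf 1 - 2x = 0$, i.e. $x_p = k/2$ for all $p$; this requires $k$ to be even, contradicting the standing assumption. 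Hence $a \in \{1,\ldots,k-1\}$ and $N_{ij}^m \in \{-k+2,\ldots,k-2\}$. The one delicate point is the identification of the endpoint cases with the degenerate ``repeated $4\times 4$ block'' configuration and the observation that such a block cannot be completed to a Hadamard matrix of order $4k$ with $k$ odd; everything else is routine bookkeeping with Lemma \ref{capdiff}.
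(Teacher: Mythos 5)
Your proof is correct, and it is in fact more careful than the paper's own treatment. The paper derives this corollary by simply saying it follows ``from Lemma \ref{capdiff}'': the parity claim is indeed immediate from $N_{ij}^m = k - 2\,|\xi_i\cap\xi_j\cap\xi_m|$ (exactly as you argue), but the sharper claim that the values $\pm k$ cannot occur is never explicitly justified; it traces back to the assertion $1\le a < k$ made without proof in the block decomposition inside the proof of Proposition \ref{ZLattice}. That assertion is not free: it genuinely requires $k$ odd (for even $k$ it fails, e.g.\ for Sylvester-type matrices, where the componentwise product of two columns can equal $\pm$ a third column, giving $a\in\{0,k\}$), whereas Proposition \ref{ZLattice} itself makes no parity assumption. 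Your fifth-column argument supplies precisely this missing step: if $a\in\{0,k\}$, the columns $b_1,b_i,b_j,b_m$ restricted to rows repeat a fixed $4\times 4$ Hadamard pattern $T$ exactly $k$ times, and orthogonality of any further column (which exists since $k>1$) against these four yields $T^{T}(k\mathbf{1}-2x)=0$, hence $x_p = k/2 \notin \Zz$, a contradiction. Your computation is right, including the fact that $T$ is invertible in both endpoint cases. One caution about your parenthetical alternative: invoking the paper's lemma that $b_ib_j = kb_l$ forces $k$ even would be circular as the paper is arranged, since that lemma is proved by citing Corollary \ref{hadclsts}, whose proof in turn cites the present corollary; the circle can be broken (Corollary \ref{hadclsts} only uses the parity half, which stands on its own), but your direct argument sidesteps the issue entirely, which is a genuine advantage of your route.
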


Corollary \ref{strcnotzero} gives us all the information needed to determine
the closed subsets of $R$:

\begin{cor}\label{hadclsts}
If $k$ is odd, then the closed subsets of $R$ are exactly the sets
$\{b_1,b_i\}$ for $1\le i \le 4k$ and the set of all basis elements.
\end{cor}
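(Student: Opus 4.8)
The plan is to prove both inclusions. For the easy direction I would check directly that every set in the list is closed. Since $b_1 = k(1,\dots,1)^T$ acts as the scalar $k$ under componentwise multiplication, one has $b_1 b_i = k b_i$ for all $i$, and the ring is of the form of Example \ref{exkkk}, so $b_i^2 = k b_1$. Hence the $\Zz$-span of $\{b_1\}$, of each $\{b_1, b_i\}$, and of the whole basis are each closed under multiplication.

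For the converse, let $B' \subseteq B$ be an arbitrary closed subset. The single tool I would use repeatedly is that $R$ is free on $B$, so the $\Zz$-span of $B'$ is exactly the set of elements whose $B$-coordinates are supported on $B'$. First I would show that if $B'$ contains any $b_i$ with $i \ne 1$, then $b_1 \in B'$: indeed closedness gives $b_i^2 = k b_1 \in \langle B'\rangle$, and since $k \ne 0$ the coordinate-support observation forces $b_1 \in B'$. Consequently every closed subset not contained in $\{b_1\}$ contains $b_1$, and the classification reduces to counting the number of indices $i \ne 1$ with $b_i \in B'$: zero gives $\{b_1\}$ (the case $i=1$ of the listed family), and exactly one gives $\{b_1, b_i\}$.

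The decisive case is when $B'$ contains $b_1$ together with two distinct elements $b_i, b_j$ with $i,j \ne 1$. Expanding $b_i b_j = \sum_m N_{ij}^m b_m$ and invoking closedness, every index $m$ with $N_{ij}^m \ne 0$ must satisfy $b_m \in B'$. Now $N_{ij}^1 = 0$ since $i \ne j$, while for each remaining index $m$ with $|\{1,i,j,m\}| = 4$ Corollary \ref{strcnotzero} gives that $N_{ij}^m$ is odd, hence nonzero. Together with $b_i, b_j \in B'$ this forces $b_m \in B'$ for every $m$, i.e. $B' = B$. I expect this to be the crux of the argument, and it is exactly where the hypothesis that $k$ is odd enters: the whole conclusion rests on the nonvanishing of these off-diagonal structure constants, which is guaranteed only by the parity statement of Corollary \ref{strcnotzero} and would fail for even $k$. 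Finally I would record the boundary conventions (the empty set, and the identification of the zero-extra-element case with $i=1$) so that the three cases reproduce precisely the family $\{b_1, b_i\}$, $1 \le i \le 4k$, together with the full basis.
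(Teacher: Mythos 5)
Your proof is correct and follows essentially the same route as the paper: in both, the crux is that Corollary \ref{strcnotzero} makes every $N_{ij}^m$ with $|\{1,i,j,m\}|=4$ odd, hence nonzero, so a closed subset containing two distinct elements different from $b_1$ must contain all $b_m$, while $b_i^2=kb_1$ forces $b_1$ in. Your write-up is merely more explicit about the easy direction (that the listed sets are indeed closed), the freeness/support argument, and the boundary cases, all of which the paper leaves implicit.
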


\begin{proof}
Suppose $B'$ is a closed subset with two different elements $b_i$ and $b_j$,
both unequal to $b_1$. Then by corollary \ref{strcnotzero},
$N_{ij}^m\ne 0$ for all $m$ with $|\{1,i,j,m\}|=4$. But $B'$ is closed,
so $b_m\in B'$ for all $m\ne 1$. Further, $b_i^2=kb_1$ which implies
$b_1\in B'$.
\end{proof}

Since the ring of a Hadamard matrix is a $\Zz$-based rng, Proposition
\ref{orthrel} applies:

\begin{prop}\label{exhad}
Let $(R,B)$ be a $\Zz$-based rng of rank $4k$.
Assume that $\sim$ is the identity on $B$, that
$k\cdot 1 = b_1 \in B$ and that
\[ N_{ii}^j = \delta_{1,j} k \]
for all $i,j$.
Then there exists a Hadamard matrix of size $4k \times 4k$.
\end{prop}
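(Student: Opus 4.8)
The plan is to show that the $s$-matrix $s$ of $(R,B)$, after dividing by $k$, is already a Hadamard matrix. Since $R$ has rank $4k$, the matrix $s$ is a square $4k\times4k$ matrix, and by the discussion in section \ref{smat} it is invertible. First I would identify the column of $s$ belonging to $b_1$. The hypothesis $k\cdot1=b_1$ means $b_1=ke$, where $e$ is the identity of $R_\Cz$; since $\varphi$ is a $\Cz$-algebra homomorphism, $\varphi(e)$ is the multiplicative identity of $\Cz^{4k}$, i.e. the all-ones vector. Hence the column of $s$ corresponding to $b_1$ is constant equal to $k$, that is, $s_{l,1}=k$ for every row index $l$. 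This is exactly the situation of example \ref{exkkk}.

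Next I would exploit the hypothesis $N_{ii}^j=\delta_{1,j}k$, which says precisely that $b_i^2=kb_1$ for all $i$. Applying $\varphi$ (equivalently, squaring the $i$-th column of $s$ entrywise and comparing with $k$ times the first column) gives
\[ s_{l,i}^2 = k\,s_{l,1} = k^2 \]
for all $l,i$, so every entry of $s$ equals $\pm k$. Consequently $H:=\tfrac1k s$ lies in $\{\pm1\}^{4k\times4k}$, and $H_{l,1}=1$ for all $l$, matching the normalization required of a Hadamard matrix. (Consistently, lemma \ref{closedcomplex} together with $\tilde i=i$ shows the columns of $s$ are real, so no conjugation intervenes.)

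Finally I would invoke orthogonality of the rows. Because $\sim$ is the identity on $B$, proposition \ref{orthrel} specializes to $\sum_{i} s_{l,i}\,s_{l',i}=0$ whenever $l\ne l'$; that is, the rows of $s$ are orthogonal in the ordinary Euclidean sense. For the diagonal entries I compute directly $\sum_i s_{l,i}^2=4k\cdot k^2$, using that there are $4k$ columns and each entry has square $k^2$. Dividing by $k^2$ yields $HH^T=4kI$, so $H$ is a Hadamard matrix of order $4k$, as claimed. The matrix $s$ being determined only up to a permutation of rows is harmless, since permuting rows preserves the Hadamard property.

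As for difficulty, there is no serious obstacle: the statement is essentially a verification, and each step is short. The only points that need care are (i) checking that $\varphi(e)$ is genuinely the all-ones vector, so that the $b_1$-column is constant, and (ii) making sure the orthogonality of proposition \ref{orthrel} produces the untwisted inner product $\sum_i s_{l,i}s_{l',i}$ rather than a Hermitian one. This second point is exactly where the hypothesis that $\sim$ is trivial (hence $\tilde i=i$ and the entries are real) does its work.
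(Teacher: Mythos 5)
Your proposal is correct and follows essentially the same route as the paper's proof: identify the $b_1$-column of $s$ as constant $k$ (from $b_1=ke$), use $b_i^2=kb_1$ under $\varphi$ to force $s_{li}^2=k s_{l1}=k^2$, hence entries $\pm k$, and conclude with row orthogonality from proposition \ref{orthrel}. The paper states this more tersely (leaving the all-ones image of $e$ and the final rescaling $H=\tfrac1k s$ implicit), but the argument is the same; your extra care about the untwisted form of the orthogonality when $\sim$ is trivial is a valid, if minor, elaboration.
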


\begin{proof}
By proposition \ref{orthrel}, the $s$-matrix $s$ of $R$ has orthogonal
rows. Multiplication of elements
of $B$ corresponds to componentwise multiplication of columns of $s$. Hence
$N_{ii}^j = \delta_{1,j} k$ means $s_{li}^2 = s_{l1} k = k^2$ for all $i,l$.
So the entries of $s$ have to be equal to $\pm k$.
\end{proof}

The last proposition is a good motivation to examine the properties of the
structure constants. By Corollary \ref{strcnotzero} we already know what the structure
constants are modulo $2$. If it is possible to lift them to the $2$-adic
numbers $\Zz_2$, then the existence of a Hadamard matrix also follows.
Of course, the same holds for the entries of a Hadamard matrix:
we know them modulo $2$ and cannot lift them to $\Zz_2$.

The matrix $N_i:=(N_{ij}^m)_{j,m}$ describes the operation of the $i$-th column on
the other columns by componentwise multiplication
($N_i$ is the image of $b_i$ in the regular representation of $R$). Since we have
$b_i^2=kb_1$, the square of this matrix is a scalar matrix: $N_i^2=k^2I$. So we
get:

\begin{lem} $\sum_{m} (N_{ij}^m)^2 = k^2$ for all $i,j$.
\end{lem}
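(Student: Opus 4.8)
The plan is to combine the symmetry of the structure constants with the relation $N_i^2 = k^2 I$ recorded just before the statement. First I would observe that the explicit formula of Proposition \ref{ZLattice},
\[ N_{ij}^m = \frac{1}{4}\sum_l s_{li}s_{lj}s_{lm}, \]
is symmetric in all three indices $i,j,m$; in particular it is invariant under interchanging the lower index $j$ and the upper index $m$. Thus the matrix $N_i = (N_{ij}^m)_{j,m}$ of the regular representation is a \emph{symmetric} matrix, i.e. $(N_i)_{jm} = N_{ij}^m = N_{im}^j = (N_i)_{mj}$.

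The second step is simply to read off a diagonal entry of $N_i^2$. Since $b_i^2 = kb_1$ and $b_1$ acts on $R_\Cz$ as multiplication by $k$ (the column $b_1$ is $k$ times the all-ones vector, which is the unit $e$ of $R_\Cz\cong\Cz^{4k}$; equivalently $N_{1j}^m = k\delta_{jm}$, which follows from $s_{l1}=1$ together with $s^Ts = 4kI$), we have $N_i^2 = k^2 I$. Computing the $(j,j)$-entry of this product and applying the symmetry from the first step gives
\[ k^2 = (N_i^2)_{jj} = \sum_m (N_i)_{jm}(N_i)_{mj} = \sum_m N_{ij}^m N_{im}^j = \sum_m (N_{ij}^m)^2, \]
which is precisely the asserted identity for every $i,j$.

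I do not expect any real obstacle here: the entire content is the observation that $N_i$ is symmetric, after which the claim is just the statement that the diagonal entries of the scalar matrix $N_i^2 = k^2 I$ all equal $k^2$. The only point requiring a moment's care is the justification that $b_1$ acts as the scalar $k$ (so that $N_i^2 = N_{b_i^2} = N_{kb_1} = k^2 I$), but this is immediate from the Verlinde formula since $s_{l1}=1$ for all $l$ and the rows of $s$ are orthogonal by Proposition \ref{orthrel}.
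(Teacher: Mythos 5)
Your proof is correct and follows essentially the same route as the paper: the paper derives the lemma directly from the observation that $N_i^2 = k^2 I$ (since $b_i^2 = kb_1$ and $b_1$ acts as the scalar $k$), reading off the diagonal entries. The only difference is that you explicitly justify the symmetry $N_{ij}^m = N_{im}^j$ via the formula $N_{ij}^m = \frac{1}{4}\sum_l s_{li}s_{lj}s_{lm}$, a step the paper leaves implicit; this is a worthwhile clarification but not a different argument.
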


And using this, it is easy to prove (where the $r$-th triangular number is
defined to be $\frac{r(r+1)}{2}$):

\begin{prop}
For given $i,j$ with $|\{1,i,j\}|=3$, the number of different possible multisets
for the absolute values of the structure constants $|N_{ij}^m|$,
$m\notin\{1,i,j\}$ is less or equal to
the number of partitions of the $\frac{k-3}{2}$-th triangular number into
nonzero triangular numbers.
\end{prop}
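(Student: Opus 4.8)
The plan is to translate the statement entirely into a problem about multisets of integers subject to two constraints coming from the ring structure. Fix $i,j$ with $|\{1,i,j\}|=3$. For each $m\notin\{1,i,j\}$ the structure constant $N_{ij}^m$ is an odd integer lying in $\{-k+2,-k+4,\ldots,k-2\}$ by Corollary \ref{strcnotzero}. There are exactly $4k-3$ such indices $m$, but I expect most of the accounting to come from the two identities we have already established: the norm identity $\sum_m (N_{ij}^m)^2 = k^2$ from the preceding lemma, and the fact (from $s_{i1}=1$ and orthogonality, i.e.\ the computation $\tau(b_ib_j)=N_{ij}^1=0$ together with $N_{ij}^i, N_{ij}^j$ being determined) that the three ``special'' contributions at $m\in\{1,i,j\}$ are fixed. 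So the content is: how many multisets of absolute values $|N_{ij}^m|$ can arise, given that each value is a positive odd integer and the sum of their squares equals a fixed number.

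\textbf{Reducing to a partition count.}
First I would isolate the square-sum constraint restricted to $m\notin\{1,i,j\}$. The total $\sum_{\text{all }m}(N_{ij}^m)^2=k^2$; subtracting the contributions at $m\in\{1,i,j\}$ (which are determined: $N_{ij}^1=0$, and $N_{ij}^i,N_{ij}^j$ are fixed by the geometry, each of absolute value $1$ when $\sim$ is trivial) leaves a fixed target $T$ for $\sum_{m\notin\{1,i,j\}}(N_{ij}^m)^2$. Now each $|N_{ij}^m|$ is a positive odd integer $2t+1$ with $t\ge 0$, and $(2t+1)^2 = 8\cdot\frac{t(t+1)}{2}+1$. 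The key algebraic step is this identity: an odd square equals $1$ plus $8$ times a triangular number. Summing over the $4k-3$ indices $m$, the constant terms $+1$ contribute a fixed amount, and what remains is $8$ times a sum of triangular numbers equalling a fixed value. I expect the bookkeeping to show that this forces $\sum_m \frac{t_m(t_m+1)}{2} = \frac{(k-3)/2\cdot((k-3)/2+1)}{2}$, i.e.\ the sum of the triangular numbers $\frac{t_m(t_m+1)}{2}$ (ranging over those $m$ with $|N_{ij}^m|>1$, since $t_m=0$ contributes nothing) equals the $\frac{k-3}{2}$-th triangular number.

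\textbf{Finishing the bound.}
Once the square-sum constraint is recast as ``$\sum$ of nonzero triangular numbers $=$ the $\frac{k-3}{2}$-th triangular number'', a multiset of absolute values $|N_{ij}^m|$ determines, and is determined by, a multiset of nonzero triangular numbers summing to that fixed triangular number. Such a multiset is precisely a partition of the $\frac{k-3}{2}$-th triangular number into nonzero triangular parts. Hence the number of possible multisets of absolute values is at most the number of such partitions. The inequality (rather than equality) in the statement accounts for the fact that not every abstract partition into triangular numbers need be realizable by an actual Hadamard matrix, so we only get an upper bound; I would phrase the final sentence as ``every admissible multiset gives rise to such a partition, whence the count is bounded by the number of partitions.''

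\textbf{The main obstacle.}
The delicate step is pinning down exactly which indices $m$ to count and verifying that the arithmetic of subtracting the $m\in\{1,i,j\}$ contributions lands precisely on the $\frac{k-3}{2}$-th triangular number rather than something off by a constant. This requires using $k$ odd (so that $\frac{k-3}{2}$ is an integer) and checking that the fixed contributions at $m\in\{1,i,j\}$ are exactly three squares of odd numbers equal to $1$ (using that $\sim$ is trivial, so $N_{ij}^i=N_{ij}^{\tilde i\tilde j}$ symmetry pins these down). The identity $(2t+1)^2=8\binom{t+1}{2}+1$ is the engine, but matching the residual target to a triangular number, rather than merely to ``some fixed integer,'' is where I expect to have to be careful with the constants.
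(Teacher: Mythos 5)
Your overall strategy is the same as the paper's (use the norm identity $\sum_m (N_{ij}^m)^2 = k^2$, the oddness of the $N_{ij}^m$ from Corollary \ref{strcnotzero}, and the identity $(2t+1)^2 = 8\cdot\frac{t(t+1)}{2}+1$ to convert the square-sum constraint into a sum of triangular numbers), but the step you yourself flagged as delicate is exactly where your proposal goes wrong, and as written the constants do not come out. You claim that the contributions at $m\in\{1,i,j\}$ are ``fixed, each of absolute value $1$'' for $m=i,j$ (and later, inconsistently, that all three are squares of odd numbers equal to $1$). In fact all three vanish: $N_{ij}^1=\frac14\sum_l s_{li}s_{lj}=0$ by orthogonality of columns $i\ne j$, and $N_{ij}^i=\frac14\sum_l s_{li}^2 s_{lj}=\frac14\sum_l s_{lj}=0$ because column $j\ne 1$ is orthogonal to the all-ones column (similarly $N_{ij}^j=0$). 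Note also that Corollary \ref{strcnotzero} only forces oddness when all four indices $1,i,j,m$ are distinct, so there is no parity obstruction to these three being zero.

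This matters quantitatively. With the correct values, the entire norm $k^2$ is carried by the $4k-3$ indices $m\notin\{1,i,j\}$, each $|N_{ij}^m|=2t_m+1$ odd, and the bookkeeping gives
\[
8\sum_m \tfrac{t_m(t_m+1)}{2} \;=\; k^2-(4k-3) \;=\;(k-1)(k-3)\;=\;8\cdot\tfrac{\lambda(\lambda+1)}{2},
\qquad \lambda=\tfrac{k-3}{2},
\]
which is exactly the $\frac{k-3}{2}$-th triangular number, and the injection into partitions into nonzero triangular parts (the multiplicity of $|N_{ij}^m|=1$ is recovered as $4k-3$ minus the number of parts) finishes the proof as in the paper. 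With your residual target $T=k^2-2$ (or $k^2-3$), the quantity $T-(4k-3)$ is not divisible by $8$ when $k$ is odd (since $k^2\equiv 1$ and $4k-3\equiv 1 \pmod 8$, the target must be $\equiv 1 \pmod 8$), so no multiset of $4k-3$ odd squares can sum to it and the reduction to the stated triangular number fails. The fix is a one-line computation of the three vanishing structure constants; everything else in your argument then matches the paper's proof.
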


\begin{proof}
We have $4k-3$ indices $m$ such that the structure constants $N_{ij}^m$
are unequal to $0$ and in this case they are odd. We are
considering multisets, so the number we are interested in is the number
of elements of
\[ \{ (i_1,\ldots,i_{4k-3}) \mid
1\le i_1 \le \ldots \le i_{4k-3}, \quad
2 \nmid i_m \:\: \forall m,\quad
\sum_{m=1}^{4k-3}i_m^2 = k^2 \} \]
\[ = \{ \bar i\in\Zz^{4k-3} \mid
0\le i_1 \le \ldots \le i_{4k-3}, \quad
\sum_{m=1}^{4k-3}(2i_m+1)^2 = k^2 \} \]
\[ = \{ \bar i\in\Zz^{4k-3} \mid
0\le i_1 \le \ldots \le i_{4k-3}, \quad
\sum_{m=1}^{4k-3} i_m^2+i_m
= \lambda^2+\lambda \} \]
for $\lambda:=\frac{k-3}{2}$.
\end{proof}

Proposition \ref{exhad} may be understood as follows. The associativity
of $R$ is equivalent to the fact that the matrices $N_i$ commute.
Since they are semisimple, with eigenvalues $\pm k\in\Zz$,
they are simultaneously diagonalizable. Commutativity of $R$
corresponds to the symmetry of the $N_i$.

Conversely, for the same reason, if a Hadamard matrix is given,
then we may recover it from the structure constants of its ring.
And if $k\equiv 1 \:\:(\modu 3)$, then we may
even reduce them modulo $3$. Indeed, over $\Fz_3$ the matrices
still commute, the eigenvalues are $\pm 1$ and $N_i^2=I$ for
all $i$. So they are still simultaneously diagonalizable which
yields the Hadamard matrix over $\Fz_3$ without loss
of information.

\begin{prop}
If $k\equiv 1 \:\:(\modu 3)$ then it is possible
to compute the Hadamard matrix from the structure constants modulo $3$
of its ring.
\end{prop}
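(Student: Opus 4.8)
The plan is to adapt the discussion immediately preceding the statement, which already establishes the key conceptual mechanism: the structure constants $N_{ij}^m$ of the ring $R$ associated to a Hadamard matrix encode, via the commuting family of regular-representation matrices $N_i=(N_{ij}^m)_{j,m}$, enough information to recover the matrix $s$ up to the harmless ambiguities of row/column permutation and sign change. The only new ingredient under the hypothesis $k\equiv 1\ (\modu\ 3)$ is that this recovery survives reduction modulo $3$. So first I would reduce all the $N_{ij}^m$ modulo $3$ and pass to the matrices $\BAR{N_i}\in M_{4k}(\Fz_3)$ obtained by reducing the $N_i$ entrywise.

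Next I would verify that over $\Fz_3$ the three structural facts that drove Proposition \ref{exhad} still hold. Since $b_i^2=kb_1$ in $R$ we have $N_i^2=k^2 I$ over $\Zz$; reducing modulo $3$ and using $k\equiv 1$ gives $\BAR{N_i}^{\,2}=I$, so each $\BAR{N_i}$ is an involution and hence semisimple over $\Fz_3$ with eigenvalues in $\{+1,-1\}$ (note $+1\ne -1$ in characteristic $3$, which is exactly why the reduction is information-preserving and why $\modu 3$ rather than $\modu 2$ is required). Associativity and commutativity of $R$ give $N_iN_j=N_jN_i$ over $\Zz$, hence $\BAR{N_i}\,\BAR{N_j}=\BAR{N_j}\,\BAR{N_i}$ over $\Fz_3$. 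Thus the $\BAR{N_i}$ form a commuting family of diagonalizable matrices and can be simultaneously diagonalized over (a suitable extension of) $\Fz_3$.

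Then I would extract the reduced Fourier/$s$-matrix from this simultaneous diagonalization exactly as in the characteristic-zero argument sketched before the statement. The simultaneous eigenbasis yields, for each common eigenvector (indexed by a row $l$), the tuple of eigenvalues $\BAR{s}_{li}\in\{\pm 1\}$ of the various $\BAR{N_i}$; assembling these produces a matrix $\BAR{s}\in\{\pm 1\}^{4k\times 4k}$ which is the reduction modulo $3$ of a genuine Hadamard matrix $s$. Because every entry of the true $s$ lies in $\{\pm 1\}$ and the map $\{\pm 1\}\to\Fz_3$ is injective, no information is lost: from $\BAR{s}$ one reads back $s$ entry by entry. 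This establishes that the Hadamard matrix is computable from the structure constants modulo $3$.

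The main obstacle is making the phrase ``simultaneous diagonalization over $\Fz_3$'' fully rigorous, since the relevant eigenvalues $\pm 1$ already lie in $\Fz_3$ but one must argue that the commuting involutions $\BAR{N_i}$ are jointly diagonalizable with a consistent labeling of the simultaneous eigenspaces — equivalently, that the reduced matrix $\BAR{s}$ one reconstructs is uniquely determined up to the same permutation-of-rows ambiguity present in characteristic zero. Concretely the subtlety is ruling out that distinct characteristic-zero rows could collapse to the same row modulo $3$; this is controlled by the fact that the full family $\{\BAR{N_i}\}$ separates the simultaneous eigenspaces (the original $s$ being invertible), so the joint eigenvalue tuples remain pairwise distinct after reduction. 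Once that separation is confirmed the reconstruction is routine, and the statement follows.
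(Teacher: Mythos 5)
Your proposal is correct and takes essentially the same route as the paper: the paper's own justification is exactly the discussion preceding the statement, namely that over $\Fz_3$ the matrices $N_i$ still commute, satisfy $N_i^2=I$ because $k\equiv 1 \:(\modu 3)$, and are therefore simultaneously diagonalizable, which yields the Hadamard matrix over $\Fz_3$ without loss of information since $1\ne -1$ in $\Fz_3$. The one subtlety you flag, that distinct rows of $s$ might collapse modulo $3$, is real but settles immediately: two $\pm 1$-vectors that agree entrywise modulo $3$ are equal over $\Zz$ (equivalently, $\det(s)^2=(4k)^{4k}$ is prime to $3$), so the reconstruction is faithful.
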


Taking the structure constants of a Hadamard matrix modulo $2$ yields an
$\Fz_2$-algebra (this is trivial, but it should be stated for the sake of
completeness).

\begin{cor}
The vector space $\Fz_2^{4k}$ with basis $v_1,\ldots,v_{4k}$ and multiplication
defined by $v_i v_j = \sum_m N_{ij}^m v_m$ where
\[ N_{ij}^1=N_{1i}^j=N_{i1}^j=\delta_{ij},\quad N_{ij}^m=1 \]
for $|\{1,i,j,m\}|=4$, is an associative commutative algebra.
\end{cor}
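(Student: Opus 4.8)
The final corollary asserts that the $\Fz_2$-vector space $\Fz_2^{4k}$ with basis $v_1,\ldots,v_{4k}$ and the stated multiplication is an associative commutative algebra, where $v_1$ acts as identity, $v_i^2 = v_1$, and $N_{ij}^m = 1$ whenever $i,j,m$ are pairwise distinct and all different from $1$.

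**The approach.** The plan is to obtain this purely by reduction modulo $2$ from the ring $R$ associated to a Hadamard matrix (Proposition~\ref{ZLattice}), rather than verifying associativity by a direct triple-product computation. The ring $R$ is already an associative commutative $\Zz$-algebra; reducing its structure constants modulo $2$ produces an associative commutative $\Fz_2$-algebra automatically, since reduction $\Zz \to \Fz_2$ is a ring homomorphism and carries $R$ to $R/2R$. So the only real content is to check that the reduced structure constants are exactly the ones written in the corollary.

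\emph{First}, I would record the reduced values of the various $N_{ij}^m$. Recall $b_1 = k\cdot 1$, so $b_1$ acts as multiplication by $k$; since $k$ is odd, $k \equiv 1 \pmod 2$, and hence $N_{1i}^j = N_{i1}^j = \delta_{ij}$ survives as written. \emph{Second}, for the diagonal, $b_i^2 = k b_1$ gives $N_{ii}^j = \delta_{1,j} k \equiv \delta_{1,j} \pmod 2$, matching $v_i^2 = v_1$. \emph{Third}, and this is the crux, for pairwise distinct $i,j,m$ all unequal to $1$, Corollary~\ref{strcnotzero} states $N_{ij}^m \equiv 1 \pmod 2$, so every such structure constant reduces to $1$. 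It remains only to confirm that no other nonzero reductions appear: for $|\{1,i,j\}|=3$ with $i\ne j$ the coefficient of $b_1$ is $N_{ij}^1 = 0$ (shown in the corollary following Proposition~\ref{ZLattice}), and the coefficients $N_{ij}^i, N_{ij}^j$ are handled by the same parity analysis of Lemma~\ref{capdiff}. Assembling these, the reduced multiplication table is precisely the one in the corollary.

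**The main obstacle.** The genuine mathematical input, the oddness of every off-diagonal structure constant for distinct indices, has already been isolated in Corollary~\ref{strcnotzero}, which itself rests on Lemma~\ref{capdiff}. So the hard part is not in this corollary at all; the only thing to watch is bookkeeping on the boundary cases where one or two of $i,j,m$ coincide or equal $1$, to make sure the reduced table contains no stray terms beyond those listed. Once that case-check is complete, associativity and commutativity are inherited for free from $R$, and the proof is finished by simply invoking that $R/2R$ is a quotient ring of the associative commutative ring $R$.
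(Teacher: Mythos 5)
Your approach is the same as the paper's: the corollary carries no separate proof there precisely because it is the observation that reducing the structure constants of the Hadamard ring $R$ modulo $2$ yields $R/2R$, an associative commutative $\Fz_2$-algebra, and that the reduced table is the displayed one (with the crux being Corollary~\ref{strcnotzero}). Your bookkeeping is also essentially right, but one citation needs repair: the coefficients $N_{ij}^i$, $N_{ij}^j$ (for $i\ne j$, both different from $1$) are \emph{not} covered by Lemma~\ref{capdiff}, which requires $|\{1,i,j,m\}|=4$; worse, applying its formula blindly at $m=i$ would give $k-2|\xi_i\cap\xi_j|=k-2k=-k$, which is odd for odd $k$, i.e.\ the wrong parity. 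The correct (and immediate) argument is orthogonality: by Proposition~\ref{ZLattice},
\[ N_{ij}^i=\frac{1}{4}\sum_l s_{li}^2\, s_{lj}=\frac{1}{4}\sum_l s_{lj}=0, \]
since column $j\ne 1$ is orthogonal to the all-ones column; this is exactly the degenerate case dealt with at the end of the proof of Proposition~\ref{ZLattice}. With that one step fixed, your proof is complete and identical in substance to the paper's. (Both arguments share the feature that they establish the corollary only for those $k$ for which a Hadamard matrix of order $4k$ exists, since they realize the algebra as $R/2R$; a direct verification of associativity from the explicit table would remove that hypothesis, but the paper does not take that route either.)
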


We return to Hadamard matrices over $\Zz$.
The submatrix $\hat N_i$ of $N_i$ corresponding to the indices not equal to
$1,i$ ($i\ne 1$) is a square matrix with entries in $\Zz$ which has zeroes
only on its diagonal. Moreover, it has orthogonal rows,
\[ \hat N_i {\hat N_i}^T = k^2 I. \]
This is a generalization of weighing matrices or more precisely
of conference matrices (see \cite{wdW}, p. 145).

\begin{prop}\label{8k4}
Let
\[ W_i:=\begin{pmatrix}
\hat N_i+I & \hat N_i-I \\
\hat N_i-I & -\hat N_i-I
\end{pmatrix}, \]
$i\ne 1$. Then $W_i\cdot W_i^T = (2k^2+2) I.$
(Where `$I$' always denotes the appropriate identity matrix.)
\end{prop}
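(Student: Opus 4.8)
The plan is to verify the claimed identity $W_i W_i^T = (2k^2+2)I$ by a direct block computation, using only the two facts already established about $\hat N_i$: namely that it has zero diagonal and satisfies $\hat N_i \hat N_i^T = k^2 I$. Writing $A := \hat N_i$ for brevity, I would compute the four blocks of the product $W_i W_i^T$ and check that each diagonal block collapses to a scalar multiple of the identity while each off-diagonal block vanishes.

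First I would expand the top-left block. The first block-row of $W_i$ is $(A+I \;\; A-I)$, so the $(1,1)$-block of $W_i W_i^T$ is
\[ (A+I)(A+I)^T + (A-I)(A-I)^T = (A+I)(A^T+I) + (A-I)(A^T-I). \]
Expanding and cancelling the cross terms gives $2AA^T + 2I = 2k^2 I + 2I = (2k^2+2)I$, using $AA^T = k^2 I$. By the analogous expansion the $(2,2)$-block is $(A-I)(A^T-I) + (-A-I)(-A^T-I)$, which again yields $2AA^T + 2I = (2k^2+2)I$.

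Next I would treat the off-diagonal $(1,2)$-block, which is
\[ (A+I)(A^T-I) + (A-I)(-A^T-I). \]
Here the point is that the $AA^T$ terms cancel and the leftover terms combine to $-(A - A^T) - (A - A^T)$ up to sign; the block is $2(A^T - A)$ after simplification. \textbf{This is the one place the argument needs more than just $AA^T = k^2 I$:} the block vanishes precisely because $A = \hat N_i$ is symmetric, which follows from the symmetry of the structure constants (commutativity of $R$, equivalently $N_{ij}^m = N_{ji}^m$), already used in deriving $\hat N_i \hat N_i^T = k^2 I$. With $A = A^T$ the off-diagonal blocks are zero, and by symmetry of the whole product the $(2,1)$-block vanishes too. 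Assembling the four blocks gives $W_i W_i^T = (2k^2+2)I$ as claimed.

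The computation is entirely routine once the two inputs are in hand, so I do not anticipate a genuine obstacle; the only subtlety worth flagging is that both the diagonal-block simplification and the off-diagonal cancellation quietly rely on $\hat N_i$ being symmetric, not merely orthogonal, so I would make the use of symmetry explicit rather than leave it implicit in the transposes.
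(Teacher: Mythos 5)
Your proof is correct and is exactly the direct block computation the paper intends --- Proposition \ref{8k4} is in fact stated in the paper without any proof, so your verification supplies the omitted argument. Your flagging of the symmetry of $\hat N_i$ is the right point to make explicit: it holds because $N_{ij}^m = \frac{1}{4}\sum_l s_{li}s_{lj}s_{lm}$ (Proposition \ref{ZLattice}) is symmetric in all three indices (equivalently, commutativity plus the trivial involution), and without it the off-diagonal blocks $2(\hat N_i^T - \hat N_i)$ would not vanish.
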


So $W_i$ is an orthogonal $(8k-4)\times(8k-4)$-matrix with entries in
$\Zz\backslash\{0\}$.

\begin{exm}
Let $k=3$, so $s$ is a $12\times 12$ Hadamard matrix.
The matrices $W_i$ of proposition \ref{8k4} are then $20\times 20$
Hadamard matrices. Note that this is the only case in which
the $W_i$ have entries in $\{\pm 1\}$.
\end{exm}

\subsection{Hadamard matrices as factor rings}

By proposition \ref{factorhad}, the ring of a Hadamard matrix
is a factor ring of a ring with nonnegative structure
constants. We want to look at this ring more explicitely.

Denote the cyclic group with two elements by $Z_2$.
The ring $R$ of a Hadamard matrix of size $4k\times 4k$ is a factor
ring of a subring of $k\cdot R'$, where $R'$ is the group algebra $\Zz[Z_2^{4k-2}]$.
We can see this in the following way.

Let $v$ be the matrix defined by $v_{ij}:=\frac{1-s_{ij}}{2}\in\Fz_2$
and denote the columns of $v$ by $v_1,\ldots,v_n$. Remember that
we have $s_{i,1}=1$ for all $i$, so $v_1=0$.

\begin{prop}
The rank of $v$ is less or equal to $4k-2$.
\end{prop}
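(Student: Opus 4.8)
The plan is to prove the bound over $\Fz_2$ by exhibiting two linearly independent linear relations among the $4k$ columns $v_1,\dots,v_{4k}$; any two independent relations force $\operatorname{rank} v\le 4k-2$. First I would translate the notion of a relation into combinatorics of $s$: for a subset $S\subseteq\{1,\dots,4k\}$ one has $\sum_{j\in S}v_j=0$ in $\Fz_2^{4k}$ precisely when, for every row index $l$, the number of $j\in S$ with $s_{lj}=-1$ is even (equivalently $\prod_{j\in S}s_{lj}=1$ for all $l$). So the task becomes finding two independent such subsets.

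The first relation is immediate: since $s_{l1}=1$ for all $l$ we have $v_1=0$, so $S_1=\{1\}$ is a relation and already $\operatorname{rank} v\le 4k-1$. For the second I would take $S_2=\{2,\dots,4k\}$. The $l$-th coordinate of $\sum_{j=2}^{4k}v_j$ is the parity of $|\{j:s_{lj}=-1\}|$, i.e.\ the parity of the number of $-1$'s in row $l$. If the first row of $s$ is also normalized to consist of $+1$'s (as one may assume for a Hadamard matrix), then every remaining row is orthogonal to the all-ones row and hence has exactly $2k$ entries equal to $-1$, while the first row has none; as $2k$ is even, each of these parities vanishes and $S_2$ is a relation. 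The coefficient vectors of $S_1$ and $S_2$ have disjoint, nonempty supports, so they are $\Fz_2$-independent; hence the space of relations has dimension at least $2$ and $\operatorname{rank} v\le 4k-2$.

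The step I expect to be the crux is the second relation, namely the claim that each row of $s$ carries an even number of $-1$'s. This is where balancedness of the rows --- their orthogonality to the all-ones row --- must be used, and it is the only place where more than the first-column normalization enters. By contrast, the two cheap observations $v_1=0$ and ``each column $j\ge 2$ has even weight $|\xi_j|=2k$'' are individually insufficient: the first is one relation, and the second only places the column space inside the single hyperplane of even-weight vectors, so together they yield no better than $\operatorname{rank} v\le 4k-1$. I would therefore pin down the row counts explicitly first, and only then assemble the two relations to obtain the bound.
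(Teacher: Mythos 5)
Your proof takes exactly the paper's route: both exhibit the two relations $v_1=0$ and $\sum_{i=2}^{4k}v_i=0$, the second being equivalent to the statement that every row of $s$ contains an even number of entries $-1$. You are actually more explicit than the paper at the decisive step: the paper simply asserts the row-parity claim with no justification, whereas you single it out as the crux and derive it from orthogonality of the rows to an all-ones first row.

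The caveat is that your phrase ``as one may assume'' is not a harmless reduction here, and this is a genuine logical gap --- one your proof shares with (indeed inherits from) the paper's. Normalizing the first row is achieved by negating columns, and negating column $j$ replaces $v_j$ by $v_j+\mathbf{1}$, which can change the rank of $v$ --- the very quantity the proposition bounds. Concretely, negate a single column of a fully normalized Hadamard matrix: the paper's standing hypothesis $s_{i,1}=1$ still holds, but now every row has an odd number of $-1$'s, so $\sum_{i\ge 2}v_i=\mathbf{1}\ne 0$; moreover $\mathbf{1}$ does not lie in the previous column span (that span sits inside the hyperplane of vectors whose first coordinate is $0$, since the first row of the old $v$ is zero), so the rank increases by one. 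For the order-$12$ matrix the normalized $v$ has rank exactly $10=4k-2$ (the $\Fz_2$-Gram matrix of $v_2,\dots,v_{12}$ is $J+I$, of rank $10$), so after negating one column one obtains a matrix satisfying all of the paper's stated hypotheses with $\operatorname{rank} v=4k-1$. Thus the row-parity fact, and with it the proposition as literally stated, genuinely requires the first row to be normalized as well; this assumption is tacit and unacknowledged in the paper's own proof, while your write-up at least makes it visible. Modulo that shared convention your argument is correct and coincides with the paper's; as a standalone proof of the literal statement, the reduction you invoke cannot be justified, because the statement without it is false.
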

\begin{proof}
The first column is $0$, so it suffices to give a linear
combination of the other columns to prove the claim.
But the number of $-1$'s in each row of $s$ is even, so
$\sum_{i=2}^n v_i = 0$.
\end{proof}

So the vector space spanned by the columns of $v$ has dimension
at most $4k-2$, as an abelian group it is isomorphic to a subgroup of
$Z_2^{4k-2}$.
Componentwise multiplication of column vectors of $s$ corresponds
to addition of column vectors of $v$.

Let $G$ be the multiplicative semigroup generated by the columns of
$ks$. Then $\Zz[G]$ is a subalgebra of $\Zz[Z_2^{4k-2}]$.

By identifying each vector of $G$ with
its linear combination with respect to the columns of $ks$
(this is a $\Zz$-linear combination, because the structure constants
are integers),
we get an ideal $I$ in $\Zz[G]$ such that $R\cong \Zz[G]/I$.

This proves:

\begin{prop}
The ring $(R,B)$ of a Hadamard matrix $s$ of size $4k\times 4k$ is a factor
ring of the algebra $\Zz[H]$
\[ R \cong \Zz[H]/I, \]
where $H$ is the multiplicative semigroup spanned by the columns of $s$.
$\Zz[H]$ is a subalgebra of the group algebra $\Zz[Z_2^{4k-2}]$.
The ideal $I$ is given by
\[ I:=\langle w-\sum_i \mu_{w,i} b_i \mid w \in G \rangle \]
where $G$ is the multiplicative semigroup spanned by the columns of $ks$,
and the $\mu_{w,i}$ are given by the linear decomposition of
$w$ with respect to the columns of $ks$.
\end{prop}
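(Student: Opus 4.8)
The plan is to assemble the ingredients already prepared in this subsection, together with Proposition \ref{factorhad}, rather than to reprove everything from scratch.

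First I would pin down the nature of $H$. Since every column $s_i$ has entries in $\{\pm1\}$, its componentwise square is the all-ones column, which is the identity for componentwise multiplication because $s_{\cdot,1}=1$. Thus each generator is its own inverse and the semigroup $H$ generated by the columns of $s$ is an elementary abelian $2$-group, in particular finite, sitting inside $\{\pm1\}^{4k}$. Under the bijection $x\mapsto\frac{1-x}{2}$ from $\{\pm1\}$ to $\Fz_2$, componentwise multiplication of the columns of $s$ becomes addition of the columns of $v$, so $H$ is isomorphic to the subgroup $\langle v_1,\dots,v_n\rangle$ of $\Fz_2^{4k}$. The preceding proposition bounds the rank of $v$ by $4k-2$, so this subgroup embeds into $Z_2^{4k-2}$; consequently $\Zz[H]$ is a subalgebra of $\Zz[Z_2^{4k-2}]$, which is the second assertion.

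Next I would build the surjection onto $R$. Writing $b_i=ks_i$ for the columns of $ks$, Proposition \ref{ZLattice} shows that any product of the $b_i$ decomposes with integer coefficients over $b_1,\dots,b_n$; hence the whole multiplicative semigroup $G$ generated by the $b_i$ lies in $R$, and since the $b_i$ already belong to $G$ we get $R=\langle G\rangle_{\Zz}$. The evaluation map $\varphi$ sending a formal generator $[w]$ of the semigroup ring to the actual vector $w$ is therefore a surjective ring homomorphism onto $R$. Because the $b_i$ form a $\Zz$-basis of $R$, replacing each $[w]$ by $[w]-\sum_i\mu_{w,i}[b_i]$ (where $w=\sum_i\mu_{w,i}b_i$ is the integral decomposition of $w$) exhibits a $\Zz$-basis adapted to $\varphi$, so $\ker\varphi$ is generated precisely by these elements. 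This is exactly the ideal $I$, and since $R$ is free with the $b_i$ as a basis, taking $B'$ to be the columns of $ks$ makes $(R,B)\cong(\langle G\rangle_{\Zz}/I,\,B'+I)$ a factor ring in the sense of Definition \ref{facring}; the matching of $N_{ij}^m$ with $\mu_{b_ib_j,m}$ is the computation $b_ib_j=k^2s_is_j=\sum_m N_{ij}^m b_m$ recorded in Proposition \ref{ZLattice}.

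The main obstacle is that $G$ is infinite --- it records the growing powers of the scalar $k$ --- whereas the clean target is the \emph{finite} group algebra $\Zz[H]\le\Zz[Z_2^{4k-2}]$. Each $w\in G$ has the form $k^r h$ with $h\in H$, so $G$ maps onto $H$ by forgetting the scalar and $\langle G\rangle_{\Zz}$ sits inside $k\,\Zz[H]$; the real work is to replace the infinite semigroup ring by a finite model on which multiplication closes up to scalars. This is precisely what the construction of Theorem \ref{fannsc} provides (with all $\mu_i$ equal to $k$, as guaranteed for a Hadamard matrix by Proposition \ref{allmuequal}): it produces a finite generating set inside $\Zz[H]$ whose $\Zz$-span is a subring $R'$ with nonnegative structure constants, and Proposition \ref{factorhad} then identifies $R$ with a factor ring of $R'$. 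I would therefore phrase the final statement through this finite $R'$, reserving the infinite-$G$ description above as the explicit form of the defining ideal $I$.
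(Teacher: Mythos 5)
Your proposal is correct, and its first two paragraphs are essentially the paper's own proof: the paper likewise passes to the matrix $v_{ij}=\frac{1-s_{ij}}{2}$, bounds its rank by $4k-2$ (using $v_1=0$ and $\sum_{i\ge 2}v_i=0$), notes that componentwise multiplication of columns of $s$ becomes addition of columns of $v$, and then realizes $R$ as $\Zz[G]/I$ by ``identifying each vector of $G$ with its linear combination with respect to the columns of $ks$'' --- exactly your evaluation map $\varphi$, with your unitriangular change-of-basis argument making precise why $\ker\varphi$ is generated by the elements $[w]-\sum_i\mu_{w,i}[b_i]$. Where you genuinely diverge is the final step, and your diagnosis of the obstacle is accurate: the paper simply asserts that $\Zz[G]$ is a subalgebra of $\Zz[Z_2^{4k-2}]$, which only makes sense if one implicitly identifies $w=k^rh\in G$ with the element $k^r[h]$ of the group algebra $\Zz[H]$, so that the object actually being quotiented is the finite-rank $\Zz$-span of the image of $G$ inside $\Zz[H]\le\Zz[Z_2^{4k-2}]$ rather than the abstract, infinite-rank semigroup ring; with that identification the evaluation argument goes through verbatim and the proof stays self-contained. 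You instead keep $\Zz[G]$ formal and outsource the finite model to Theorem \ref{fannsc} and Proposition \ref{factorhad} (your remark that all $\mu_i$ equal $k$ is right, and even immediate without Proposition \ref{allmuequal}, since the $s$-matrix of $R$ is $ks$ by construction). This is legitimate, but it proves the statement only in your rephrased form, with the subring $R'$ playing the role of $\Zz[H]$ and the explicit ideal $I$ living in the formal ring $\Zz[G]$; the paper's route, suitably read, keeps both assertions about a single finite algebra. In short: same core construction, but you resolve the $G$-versus-$H$ tension by citing the general factor-ring machinery, while the paper resolves it by an implicit (and admittedly sloppy) identification inside $\Zz[Z_2^{4k-2}]$ --- your version is more careful about a point the paper glosses over, at the cost of importing Theorem \ref{fannsc}.
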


An interesting question is: What are the ideals of different
Hadamard matrices (of the same size)?
If we do the same construction for $\hat R:=\Zz[Z_2^{4k}]$, then
in $\hat R_\Cz=\hat R \otimes_\Zz \Cz$, we always get the same ideal.
The proof looks complicated, but it is completely straightforward.

\begin{prop}
In the algebra $\hat R_\Cz=\Cz[Z_2^{4k}]$, the above
ideals are equal for all Hadamard matrices.
\end{prop}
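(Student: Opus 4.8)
The plan is to realize the ideal in question as the kernel of a single, Hadamard-independent algebra homomorphism. Identify $Z_2^{4k}$ with $\{\pm1\}^{4k}$ under componentwise multiplication, so that the group elements forming the basis of $\hat R_\Cz=\Cz[Z_2^{4k}]$ are exactly the $\pm1$-vectors of length $4k$. Define the $\Cz$-linear map $\psi:\Cz[Z_2^{4k}]\to\Cz^{4k}$ sending a basis element (a group element) to the corresponding vector in $\Cz^{4k}$. Since group multiplication is componentwise multiplication, $\psi$ is a homomorphism of $\Cz$-algebras into $\Cz^{4k}$ (with componentwise multiplication), and, crucially, its definition never mentions $s$. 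First I would record that $\psi$ is surjective: the all-ones vector $\mathbf 1$ and, for each $l$, the vector $g^{(l)}$ equal to $-1$ in position $l$ and $+1$ elsewhere are both $\pm1$-vectors, and $\mathbf 1-g^{(l)}=2e_l$, so every standard basis vector lies in the image.

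Next I would show that the ideal produced by the construction of Theorem \ref{fannsc}, read over $\Cz$, is exactly $\ker\psi$. The columns $s_1,\dots,s_{4k}$ of $s$ are themselves group elements, and since $s$ is invertible every $w\in Z_2^{4k}$ has a unique expansion $w=\sum_i\mu_{w,i}s_i$ as a vector; the relevant ideal $\hat I$ is then generated by the relations $[w]-\sum_i\mu_{w,i}[s_i]$. Applying $\psi$ to such a generator gives $w-\sum_i\mu_{w,i}s_i=0$, so $\hat I\subseteq\ker\psi$. For the reverse inclusion I would argue by dimension: in $\hat R_\Cz/\hat I$ each $[w]$ is congruent to $\sum_i\mu_{w,i}[s_i]$, so the quotient is spanned by the $4k$ classes of $[s_1],\dots,[s_{4k}]$ and has dimension at most $4k$, whereas surjectivity of $\psi$ gives $\hat R_\Cz/\ker\psi\cong\Cz^{4k}$ of dimension exactly $4k$. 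Since $\hat I\subseteq\ker\psi$, the natural surjection $\hat R_\Cz/\hat I\twoheadrightarrow\hat R_\Cz/\ker\psi$ then forces both to be $4k$-dimensional and $\hat I=\ker\psi$. As $\ker\psi$ is manifestly independent of the chosen Hadamard matrix, this proves the proposition.

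If one wants a description making the independence fully transparent, I would decompose $\hat R_\Cz$ into its primitive idempotents $e_a=2^{-4k}\sum_w\chi_a(w)[w]$, indexed by $a\in Z_2^{4k}$, where $\chi_a(w)=\prod_j w_j^{a_j}$. Evaluating $\psi(e_a)=2^{-4k}\sum_w\chi_a(w)w$ coordinatewise, its $l$-th entry is the average over $w\in\{\pm1\}^{4k}$ of the monomial $w_l\prod_{j\in\operatorname{supp}(a)}w_j$, which is nonzero precisely when $\operatorname{supp}(a)=\{l\}$. Hence $\psi(e_a)$ is the standard basis vector $e_l$ when $a$ has support $\{l\}$ and is $0$ otherwise, giving $\ker\psi=\bigoplus_{|a|\ne1}\Cz e_a$ (with $|a|$ the Hamming weight), a subspace that plainly does not depend on $s$. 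The main obstacle here is not any single computation but the bookkeeping of the middle paragraph: one must check carefully that passing to $\Cz$ collapses the integral semigroup machinery of Theorem \ref{fannsc} (the maximal elements, the $\gcd$-completion, the lattice checks) to the single family of relations $[w]=\sum_i\mu_{w,i}[s_i]$, so that the resulting ideal is genuinely all of $\ker\psi$ and nothing smaller.
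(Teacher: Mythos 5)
Your proof is correct, and its core argument takes a genuinely different route from the paper's. The paper works entirely inside the split semisimple algebra $\Cz[Z_2^{4k}]\cong\Cz^{2^{4k}}$: it expands the generators $b_i-\sum_j\lambda_{ij}s_j$ in the basis of primitive idempotents coming from the character table of $Z_2^{4k}$, and checks, using $s^{-1}=\frac{1}{4k}s^T$, that the coefficients vanish exactly at the $4k$ idempotents indexed by weight-one characters (coordinate evaluations), so that the ideal is the span of the remaining $2^{4k}-4k$ idempotents, a set manifestly independent of $s$. You instead realize the ideal as the kernel of the evaluation homomorphism $\psi:\Cz[Z_2^{4k}]\to\Cz^{4k}$, $[w]\mapsto w$, which never mentions $s$; the inclusion $\hat I\subseteq\ker\psi$ is immediate, and equality follows from your spanning/dimension count (the quotient by $\hat I$ is spanned by the $4k$ classes $[s_i]$, while $\hat R_\Cz/\ker\psi\cong\Cz^{4k}$ by surjectivity of $\psi$). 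This buys two things. First, you use only the invertibility of $s$, not its orthogonality, so the argument applies verbatim to any invertible $\pm1$-matrix. Second, you make fully rigorous a direction the paper leaves implicit: the paper explicitly verifies only that the generators' coefficients vanish at the weight-one idempotents, whereas concluding that the ideals are \emph{equal} (rather than merely contained in a common $s$-independent ideal) also requires that at every other idempotent some generator is nonzero --- your dimension count supplies this for free. Your final paragraph, computing $\psi(e_a)$ by averaging monomials over the cube and obtaining $\ker\psi=\bigoplus_{|a|\neq 1}\Cz e_a$, recovers precisely the paper's idempotent description, so the two proofs meet in the end. One last remark: the caution in your closing paragraph is unnecessary --- the paper's own proof of this proposition already takes as generators exactly the relations $b_i-\sum_{j}\lambda_{ij}s_j$ ranging over all $2^{4k}$ group elements $b_i$, which is precisely your family $[w]-\sum_i\mu_{w,i}[s_i]$, so no collapsing of the semigroup machinery of Theorem \ref{fannsc} needs to be justified.
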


\begin{proof}
The ideals of $\hat R_\Cz$ are easy to describe. For this, it
suffices to give a basis of primitive idempotents; $\hat R_\Cz$ is
isomorphic to $\Cz^{2^n}$, $n=4k$ with componentwise multiplication.
Then it is obvious that the ideals are in bijection with
the subsets of $\{1,\ldots,2^n\}$.

We identify $Z_2$ with $\Zz^\times$ and view the columns
$s_1,\ldots,s_n$ of $s$ as elements of $Z_2^n$, so there
exists a map $\tau : \{1,\ldots,n\}\rightarrow\{1,\ldots,2^n\}$
such that $s_i=b_{\tau(i)}$ if
$b_1,\ldots,b_{2^n}$ are the elements of $Z_2^n$. Define
\[ \sigma : Z_2^n \rightarrow \Fz_2^n, \quad
 (\varepsilon_1,\ldots,\varepsilon_n) \mapsto
(\delta_{\varepsilon_1,-1},\ldots,\delta_{\varepsilon_n,-1}).\]
The primitive idempotents are given by the character table $T$ of
$Z_2^n$ which is the $n$-th tensor power of the character
table of $Z_2$. More explicitly,
\[ T_{i,j}=(-1)^{(\sigma(b_i)\mid \sigma(b_j))} \]
where $(\cdot\mid\cdot)$ is the standard inner product on $\Fz_2^n$.
So the primitive idempotents are
\[ c_i := \frac{1}{2^n}\sum_j T_{i,j} b_j \]
for $i=1,\ldots,2^n$ and we also get $b_i=\sum_j T_{i,j} c_j$.
Now in ${\Cz^\times}^n$, we have decompositions
\[ b_i = \sum_{j=1}^n \lambda_{ij} s_j \]
for some suitable $\lambda_{i,j}$. Denote
$\lambda_i:=(\lambda_{i1},\ldots,\lambda_{in})$. Then
\[ \lambda_i= s^{-1} b_i = \frac{1}{n} s^T b_i \]
because $s^{-1}=\frac{1}{n}s^T$. The ideal $I$ is generated
by all elements $b_i-\sum_{j=1}^n \lambda_{ij}s_j$ and with
respect to the basis $c_1,\ldots,c_{2^n}$, this becomes
\[ I=\langle
 \sum_m (T_{i,m}-\sum_{j=1}^n \lambda_{ij}T_{\tau(j),m}) c_m
 \mid i=1,\ldots,2^n \rangle .\]
Now it suffices to check that the coefficient
$T_{i,m}-\sum_{j=1}^n \lambda_{ij}T_{\tau(j),m}$ is zero exactly
for $m$ such that there exists $\mu$ with
$\sigma(b_m)_\nu=\delta_{\mu,\nu}$ for all $\nu$.
But for such an $m$,
\[ T_{i,m}=(-1)^{\sigma(b_i)_\mu}=b_{i,\mu} \]
and also $T_{\tau(j),m}=b_{\tau(j),\mu}$.
The assertion now follows from the orthogonality of $s$.
\end{proof}

\subsection*{Acknowledgment}

Section \ref{zbrwo} of this article generalizes and extends some
results of \cite{mC} which has been achieved under the supervision
of G. Malle. The author is also thankful to G. Malle for many
other valuable comments.

\bibliographystyle{amsplain}
\bibliography{references}

\end{document}